\def\coralreport{1}
\newcolumntype{P}[1]{>{\centering\arraybackslash}p{#1}}
\newcommand{\st}{\text{s.t.}}
\newcommand{\RR}{\mathbb{R}}
\newcommand{\NN}{\mathbb{N}}
\newcommand{\XXX}{\mathcal{X}}
\newcommand{\tbar}{{\bar t}}
\newcommand{\that}{{\hat t}}
\newcommand{\Gammabar}{{\bar \Gamma}}
\newcommand{\Qbar}{{\bar Q}}
\newcommand{\Zbar}{{\bar Z}}
\newcommand{\fbar}{{\bar f}}
\newcommand{\cbar}{{\bar c}}
\newcommand{\gbar}{{\bar g}}
\newcommand{\Jbar}{{\bar J}}
\newcommand{\Wbar}{{\bar W}}
\newcommand{\Hbar}{{\bar H}}
\newcommand{\mbar}{{\bar m}}
\newcommand{\dbar}{{\bar d}}
\newcommand{\phibar}{{\bar \phi}}
\newcommand{\taubar}{{\bar \tau}}
\newcommand{\vbar}{{\bar v}}
\newcommand{\ubar}{{\bar u}}
\newcommand{\ybar}{{\bar y}}
\newcommand{\Pbar}{{\bar P}}
\newcommand{\dhat}{{\hat d}}
\newcommand{\alphahat}{{\hat \alpha}}
\newcommand{\alphabar}{{\bar \alpha}}
\newcommand{\epsktau}{\vareps_{\tau,k}}
\newcommand{\epskd}{\varepsilon_{d,k}}
\newcommand{\epsku}{\varepsilon_{u,k}}
\newcommand{\epskv}{\varepsilon_{v,k}}
\newcommand{\Ekmtwo}{E_k^{m, 2}}
\newcommand{\Ekmone}{E_k^{m, 1}}
\newcommand{\Ekm}{E_k^m}
\newcommand{\epskg}{\vareps_{g,k}}
\newcommand{\epskc}{\vareps_{c,k}}
\newcommand{\epskJ}{\vareps_{J,k}}
\newcommand{\calEkmone}{\mathcal{E}_k^{m, 1}}
\newcommand{\calEkmtwo}{\mathcal{E}_k^{m, 2}}
\newcommand{\calEkm}{\mathcal{E}_k^m}
\newcommand{\sigmabar}{{\bar \sigma}}
\newcommand{\emOneOne}{e_1^{m, 1}}
\newcommand{\emOneTwo}{e_2^{m, 1}}
\newcommand{\emTwoOne}{c_1^{m, 2}}
\newcommand{\emTwoTwo}{c_2^{m, 2}}
\newcommand{\xizero}[1]{\xi_2(#1)}
\newcommand{\xione}{\xi_{1, k}}
\newcommand{\xitwo}{\xi_5}
\newcommand{\xithree}{\xi_3}
\newcommand{\xifour}{\xi_4}
\newcommand{\xifive}{\xi_6}
\newcommand{\eps}{\epsilon}
\newcommand{\vareps}{\varepsilon}
\newtheorem{theorem}{Theorem}
\newtheorem{assumption}{Assumption}[section]
\newtheorem{corollary}{Corollary}[section]
\newtheorem{lemma}{Lemma}[section]
\newtheorem{remark}{Remark}[section]
\newcommand{\ftrue}{\fbar}
\newcommand{\gtrue}{\gbar}
\newcommand{\ctrue}{\cbar}
\newcommand{\Jtrue}{\Jbar}
\newcommand{\vtrue}{\vbar}
\newcommand{\utrue}{\ubar}
\newcommand{\dtrue}{\dbar}
\newcommand{\ytrue}{\ybar}
\newcommand{\phitrue}{\phibar}
\newcommand{\mtrue}{\mbar}
\newcommand{\tautrue}{\taubar}
\newcommand{\Wtrue}{\Wbar}
\newcommand{\Htrue}{\Hbar}
\newcommand{\Ztrue}{\Zbar}
\newcommand{\Ptrue}{\Pbar}
\newcommand{\Gammatrue}{\Gammabar}
\newcommand{\JCfrac}{{\tfrac{\|J_k^Tc_k\|}{\|\Jtrue_k^T\ctrue_k\|}}}
\begin{document}

\title{An Interior-Point Algorithm for Continuous Nonlinearly Constrained Optimization with Noisy Function and Derivative Evaluations}






\ifthenelse{\coralreport = 1}{

\author{Frank E.~Curtis\thanks{E-mail: \texttt{frank.e.curtis@lehigh.edu},
supported by ONR grant N00014-24-1-2703}}
\affil{Department of Industrial and Systems Engineering, Lehigh University}
\author{Shima Dezfulian\thanks{E-mail: \texttt{dezfulian@u.northwestern.edu},
supported by NSF Grant DMS-2012410}}
\author{Andreas Waechter\thanks{E-mail: \texttt{andreas.waechter@northwestern.edu},
supported by NSF Grant DMS-2012410}}
\affil{Department of Industrial Engineering and Management Sciences, Northwestern University}

\titlepage

}{

}

\maketitle

\begin{abstract}
  An algorithm based on the interior-point methodology for solving continuous nonlinearly constrained optimization problems is proposed, analyzed, and tested.  The distinguishing feature of the algorithm is that it presumes that only noisy values of the objective and constraint functions and their first-order derivatives are available.  The algorithm is based on a combination of a previously proposed interior-point algorithm that allows inexact subproblem solutions and recently proposed algorithms for solving bound- and equality-constrained optimization problems with only noisy function and derivative values.  It is shown that the new interior-point algorithm drives a stationarity measure below a threshold that depends on bounds on the noise in the function and derivative values.  The results of numerical experiments show that the algorithm is effective across a wide range of problems.
\ifthenelse{\coralreport = 0}{
\bigskip\noindent
{\bf Keywords:} BS  
}{}

\end{abstract}

\section{Introduction}

Interior-point methods have been studied extensively and have proved to be successful in practice for solving continuous nonlinearly constrained optimization problems. Two prominent categories of interior-point methods are those based on line-search 
\cite{CurtScheWaec10,wachter2005line,wachter2006implementation,yamashita1998globally} 
and trust-region 
\cite{byrd2000trust,byrd1999interior,yamashita2005globally} 
mechanisms.  Many state-of-the-art nonlinear optimization solvers, such as IPOPT \cite{wachter2006implementation} and KNITRO \cite{byrd2006k}, are based on interior-point methods.

An important feature of these contemporary interior-point methods is that they rely on exact values of the objective and constraint functions and their first-order derivatives.  On the other hand, there has been little work done on developing and analyzing such algorithms for cases when function evaluations are affected by noise.  In noisy settings, only estimates of the function and derivative values are available.  Generally speaking, noise can be stochastic or deterministic.  By stochastic noise, we refer to situations in which it is reasonable to model a function value through a probability distribution, in which case any realization of a value is a realization of a random variable.  By deterministic noise, on the other hand, we are referring to situations in which any request for a function or derivative value with respect to an input value always results in the same \emph{noisy} value that approximates some original \emph{true} value.  The original function of interest may be presumed to be, e.g., continuously differentiable, but the corresponding noisy function may be nonsmooth or even discontinuous. Computational noise---say, from a numerical simulation---is an example of deterministic noise \cite{shi2021methods}, where it might only be assumed that the noise is bounded over the domain of a given function.

Recently, a number of algorithms have been proposed and analyzed for solving unconstrained, bound-constrained, or equality-constrained optimization problems in the presence of stochastic or deterministic noise.  For further information, we refer the reader to \cite{berahas2019derivative,berahas2021global,berahas2021sequential,curtis2021inexact,curtis2023sequential,curtis2019stochastic,na2023inequality,oztoprak2023constrained,paquette2020stochastic,qiu2023sequential,sun2023trust,xie2020analysis}.  There has also been some recent work on the design of stochastic-gradient-based interior-point methods \cite{CurtJianWang24,CurtKungRobiWang23}.  However, to our knowledge, there have not yet been extensions of recent algorithms for the bound- or equality-constrained setting with deterministic noise to the \emph{nonlinear-inequality-constrained setting with deterministic noise}.  In this paper, we propose, analyze, and test such an algorithm.  Due to its practical success in noiseless settings, our algorithm is based on the interior-point methodology.  As a result, we expect our approach to yield good performance in practice, and expect that our proposed techniques can be incorporated readily into state-of-the-art software packages for solving continuous nonlinearly constrained optimization problems.

\subsection{Contributions}\label{sec.contributions}

This paper builds primarily upon the interior-point algorithm proposed in \cite{dezfulian2024convergence} for solving bound-constrained optimization problems in the presence of deterministic noise by extending the methodology and theoretical results to settings involving nonlinear inequality constraints.  The method proposed in \cite{dezfulian2024convergence} is a line-search interior-point algorithm.  Proposing such a method for solving nonlinearly constrained problems comes with significant additional challenges.  For example, as is well known even in settings when exact function and derivative values are available, line-search interior-point methods that compute the search directions through Newton-based techniques may fail to converge due to inconsistency between the step computation and enforcement of nonnegativity constraints on variables~\cite{wachter2000failure}.  In addition, if the Jacobian of the constraint function at a given point can be rank deficient, then a Newton-based approach may result in an ill-posed subproblem.  To address these issues, we follow the step-decomposition approach proposed for the interior-point method in \cite{CurtScheWaec10}.

Another challenge that we face as we transition from the bound-constrained case to the nonlinear-inequality-constrained setting (with deterministic noise) is assessing the progress of the algorithm in terms of  minimizing the objective function while aiming to satisfy the constraints.  For solving bound-constrained problems with the interior-point method in \cite{dezfulian2024convergence}, the barrier subproblem is unconstrained and satisfaction of the bounds is enforced through a fraction-to-the-boundary rule.  Thus, the barrier objective function measures the progress of the algorithm and the fraction-to-the-boundary rule ensures satisfaction of the constraints.  However, in the nonlinearly constrained case, it is typically inefficient to enforce feasibility of the constraints at every algorithm iterate, meaning that it is preferred to employ a so-called infeasible algorithm.  In the context of such an algorithm, one needs a measure of progress that balances the improvement in the objective function and constraint satisfaction.  In noiseless settings, this is typically accomplished by defining a merit function that is a weighted sum of the objective function and a measure of constraint violation.  The weight assigned to the objective function relative to the constraint satisfaction \cite{nocedal1999numerical} is called the merit parameter.  Typically, the merit parameter is updated in an adaptive manner in such a way that it guarantees that the original optimization problem is being solved.  Extending such an approach and its analysis to the noisy setting is the main contribution of this work.

We mention upfront that our proposed algorithm and analysis leave certain natural questions unanswered.  In particular, we focus our attention in this paper on an algorithm for solving a single barrier subproblem that arises in an interior-point framework.  This subproblem is equality-constrained, but unlike for an approach for the equality-constrained setting with deterministic noise such as that proposed in \cite{oztoprak2023constrained}, significant obstacles arise in our setting due to the fact that our method needs to enforce nonnegativity of slack variables that are introduced for our problem formulation.  Along with our concluding remarks, we suggest a heuristic strategy for decreasing the barrier parameter so that, in practice, a sequence of barrier subproblems may be solved, as is typical for an interior-point method.  However, our analysis does not cover convergence guarantees for solving the original constrained optimization problem as the barrier parameter vanishes.  On this note, one should recognize that such an analysis would be questionable in any case.  After all, even for a method for the equality-constrained setting such as that in \cite{oztoprak2023constrained}, the presence of deterministic noise means that one cannot guarantee convergence to a solution of the original problem of interest.  Such is the case for our method for solving each barrier subproblem as well, meaning that it may be impossible to guarantee that a point is reached that is sufficiently close to optimality (or even stationarity) of the noiseless barrier subproblem to warrant a decrease in the barrier parameter.  That said, in practice, a decrease of the barrier parameter may yield good behavior, so we suggest a heuristic in our concluding remarks.

\subsection{Notation}

Our optimization problem of interest is stated in terms of a decision variable $x \in \RR^n$; see \eqref{eq.main_problem} in Section~\ref{sec.problem}.  For any $k \in \NN$, the value of $x$ in the~$k$th iteration of our algorithm is denoted as~$x_k$.  Such subscripts are used for other variables and adaptive parameters that are employed.  For any $i \in [n] := \{1,\dots,n\}$, the $i$th component of a vector $x \in \RR^n$ is denoted $x^{(i)}$.  Generally, vectors are expressed using lowercase letters.  With respect to any such a vector, the corresponding uppercase letter denotes a diagonal matrix with the components of the vector on the diagonal, e.g., with respect to $s \in \RR^q$ we denote $S := \text{diag}(s) \in \RR^{q \times q}$.

Given a matrix $M \in \RR^{m\times n}$, its range space is denoted as $\mathrm{range}(M)$, its null space is denoted as $\mathrm{null}(M)$, its minimum singular value is denoted as $\sigma_{\min}(M)$, its maximum singular value is denoted as $\sigma_{\max}(M)$, and its condition number is denoted as $\kappa(M) := \sigma_{\max}(M)/\sigma_{\min}(M)$.  Given a square matrix $M \in \RR^{n \times n}$, a square root of $M$ is any matrix $M^{1/2}$ such that $M^{1/2}M^{1/2} = M$. Given a pair of vectors $(u,v) \in \RR^n \times \RR^n$, their inner product is denoted as $\langle u,v \rangle = u^Tv$.  We use the operator notation $\|\cdot\| := \|\cdot\|_2 = \sqrt{\langle \cdot, \cdot \rangle}$ to denote the 2-norm of a vector and similarly use $\|\cdot\|$ to denote the induced 2-norm for a matrix.  For a symmetric and positive definite matrix $M \in \RR^{n \times n}$ and a pair of vectors $(u,v) \in \RR^n \times \RR^n$, we write $\langle u, v \rangle_M = u^T M v$ and denote $\|\cdot\|_M = \sqrt{\langle \cdot, \cdot \rangle_M}$.

Given two sequences $\{u_k\}$ and $\{v_k\}$ with $u_k \in \RR$ and $v_k \in [0,\infty)$ for all $k \in \NN$, the ``big-O'' notation, namely, $u_k = O(v_k)$, indicates that there exists a real number $c \in (0,\infty)$---defined independently from $k$---such that for all sufficiently large $k \in \NN$ one has $|u_k| \leq c v_k$.

\subsection{Outline}

Our problem of interest and related optimization problem formulations for which our proposed algorithm is designed are stated in Section~\ref{sec.problem}.  Our proposed algorithm for solving a so-called barrier subproblem that arises in our discussions is presented in Section~\ref{sec.algorithm}.  Our analysis of the convergence properties of our proposed algorithm from Section~\ref{sec.algorithm} is provided in Section~\ref{sec.analysis}. In Section~\ref{sec.numerical_exp}, we present the results of numerical experiments that show the effectiveness of the proposed algorithm. A conclusion is provided in Section~\ref{sec.conclusion_4}.

\section{Problem Description}\label{sec.problem}

Our main algorithm, stated as Algorithm~\ref{alg.exact_alg} on page~\pageref{alg.exact_alg}, generates an iterate sequence $\{x_k\}$ with $x_k \in \RR^n$ for all generated $k \in \NN$.  Its aim is to solve (if only approximately) the continuous optimization problem
\begin{equation}\label{eq.main_problem}
  \min_{x \in \RR^n}\ \ftrue_0(x)\ \ \st \ \ \ctrue_I(x) \leq 0,
\end{equation}
where $\ftrue_0: \RR^n \to \RR$ and $\ctrue_I: \RR^n \to \RR^q$.  (Throughout the paper, a ``bar'' above a quantity is used to indicate a noiseless quantity, as opposed to noisy quantities that have no ``bar'' above them.)  The algorithm can be extended to settings in which equality constraints are also present, but for simplicity and since the main algorithmic components and corresponding analysis are nearly identical for that situation and this one, we restrict attention to the setting with only inequality constraints.  The problem functions $\ftrue_0$ and $\ctrue_I$ are assumed to satisfy the following loose assumption pertaining to the generated iterates.  The assumption includes that the functions are continuously differentiable so that the objective gradient function $\gtrue_0 := \nabla \ftrue_0 : \RR^n \to \RR^n$ and constraint Jacobian function $\Jtrue_I := \nabla \ctrue_I^T : \RR^n \to \RR^{q \times n}$ are well defined.  As previously mentioned, the main challenge of our setting is that the algorithm only has access to noisy values of the objective function~$\ftrue_0$, its gradient function $\gtrue_0$, the constraint function $\ctrue_I$, and its Jacobian function $\Jtrue_I$.  Our assumption about the noisy evaluations of these functions is included in the following assumption.

\begin{assumption}\label{assumption.err0}
  \textit{
  The iterate sequence $\{x_k\}$ generated by Algorithm~\ref{alg.exact_alg} is contained in an open convex set $\XXX \subseteq \RR^n$ over which $\ftrue_0$ and $\ctrue_I$ are continuously differentiable, $\ftrue_0$ is bounded below, $\ctrue_I$ is bounded in norm, and both functions $\gtrue_0 := \nabla \ftrue_0$ and $\Jtrue_I := \nabla \ctrue_I^T$ are Lipschitz continuous with values that are bounded uniformly in norm.  Moreover, for any $x \in \XXX$, a call for $\ftrue_0(x)$, $\gtrue_0(x)$, $\ctrue_I(x)$, or $\Jtrue_I(x)$ results in the approximate value $f_0(x)$, $g_0(x)$, $c_I(x)$, or $J_I(x)$, respectively, where for some known constants $(\eps_f,\eps_g,\eps_c,\eps_J) \in (0,\infty)^4$ one has
  \begin{equation}\label{eq.errors}
    \begin{aligned}
      |\ftrue_0(x) - f_0(x) | \leq \eps_f,\ \ \|\gtrue_0(x) - g_0(x) \| &\leq \eps_g, \\ \|\ctrue_I(x) - c_I(x) \| \leq \eps_c,\ \ \text{and}\ \ \|\Jtrue_I(x) - J_I(x)\| &\leq \eps_J.
    \end{aligned}
  \end{equation}
  }
\end{assumption}
Under Assumption~\ref{assumption.err0}, there exist constants $f_{\inf} \in \RR$ and $(c_{I,\sup},g_{0,\sup},J_{I,\sup}) \in (0,\infty)^3$ such that for all $x \in \XXX$ one has
\begin{equation*}
  \ftrue_0(x) \geq f_{\inf},\ \|\ctrue_I(x)\| \leq c_{I,\sup},\ \|\gtrue_0(x)\| \leq g_{0,\sup},\ \text{and}\ \|\Jtrue_I(x)\| \leq J_{I,\sup},
\end{equation*}
and there exist $(L_g,L_J) \in (0,\infty)^2$ such that for all $(x,\bar x) \in \XXX \times \XXX$ one has
\begin{equation*}
  \|\gtrue_0(x) - \gtrue_0(\bar x)\| \leq L_g \|x - \bar x\|\ \text{and}\ \|\Jtrue_I(x) - \Jtrue_I(\bar x)\| \leq L_J \|x - \bar x\|.
\end{equation*}

Under Assumption~\ref{assumption.err0}---specifically the assumption that the objective and constraint functions are continuously differentiable---and a constraint qualification, e.g., the Mangasarian-Fromovitz constraint qualification (MFCQ)~\cite{MangFrom67}, it follows that at a local minimizer $x \in \RR^n$ of \eqref{eq.main_problem} there exists $\ytrue_I \in \RR^q$ with
\begin{equation*}
  \gtrue_0(x) + \Jtrue_I(x)^T\ytrue_I = 0,\ \ \ctrue_I(x) \leq 0,\ \ \ytrue_I \geq 0,\ \ \text{and}\ \ \ctrue_I(x)^T\ytrue_I = 0.
\end{equation*}
However, more generally, it is possible that the constraints of problem~\eqref{eq.main_problem} are infeasible (at least locally near a point), or that at a local minimizer of~\eqref{eq.main_problem} the constraints are degenerate in the sense that a constraint qualification does not hold.  With respect to these situations, the aim of the algorithm is at least to solve the infeasibility-minimization problem
\begin{equation}\label{eq.feas_problem}
  \min_{x \in \RR^n} \tfrac{1}{2} \| \max\{\ctrue_I(x),0\} \|^2,
\end{equation}
where the max is defined component-wise.  The objective of this problem is continuously differentiable and first-order conditions for optimality for it are
\begin{equation}\label{eq.feas_problem_opt}
  \Jtrue_I(x)^T\max\{\ctrue_I(x),0\} = 0.
\end{equation}
We refer to any $x$ yielding \eqref{eq.feas_problem_opt} with $\ctrue_I(x) \not\leq 0$ as an infeasible stationary point.

Following the design of the algorithm in \cite{CurtScheWaec10}, our algorithm aims to solve (perhaps only approximately) the optimization problem~\eqref{eq.main_problem} or at least the infeasibility-minimization problem~\eqref{eq.feas_problem} by working with related formulations involving slack variables.  In particular, observe that~\eqref{eq.main_problem} is equivalent to
\begin{equation}\label{eq.main_problem_slack}
  \min_{x \in \RR^n}\ \ftrue_0(x)\ \ \st \ \ \ctrue_I(x) + s = 0\ \ \text{and}\ \ s \geq 0,
\end{equation}
and that if $s \geq 0$ and $\ctrue_I(x) + s \geq 0$, then~\eqref{eq.feas_problem_opt} is equivalent to
\begin{equation}\label{eq.feas_problem_opt_slack}
  \Jtrue_I(x)^T(\ctrue_I(x) + s) = 0\ \ \text{and}\ \ S (\ctrue_I(x) + s) = 0.
\end{equation}
The conditions $s \geq 0$ and $\ctrue_I(x) + s \geq 0$ motivate a \textit{slack reset} that is incorporated into the algorithm to ensure that these inequalities always hold at least with respect to the noisy constraint function evaluation; see Section~\ref{sec.algorithm}.

Our algorithmic strategy falls within the interior-point methodology, which means that it does not aim to solve~\eqref{eq.main_problem_slack} or \eqref{eq.feas_problem_opt_slack} directly; rather, it employs a log-barrier function of the slack variables to replace the inequality constraints.  The general idea of an interior-point method would be to solve to some accuracy a subproblem involving the barrier function on the slack variables for a given value of a barrier parameter $\mu \in (0,\infty)$, then decrease this parameter and solve the next subproblem in an iterative manner such that the overall algorithm converges to a solution of \eqref{eq.main_problem_slack} or at least \eqref{eq.feas_problem_opt_slack}.  The goal of our main algorithm and our corresponding analysis of it is to solve such a barrier subproblem approximately using only noisy function and derivative evaluations.  The barrier subproblem that we employ (with $\log(\cdot)$ denoting the natural logarithm) is
\begin{align}\label{eq.barrier}
  \min_{(x,s) \in \RR^n \times \RR^q}\ \ftrue_0(x) - \mu \sum_{i=1}^q \log(s^{(i)})\ \ \st \ \ \ctrue_I(x) + s = 0\ \ \text{and $s > 0$}.
\end{align}
Observe that, at any point $(x,s)$ with $s > 0$, the linear independence constraint qualification (LICQ) with respect to \eqref{eq.barrier} holds.  Hence, at any local minimizer $(x,s)$ of~\eqref{eq.barrier}, there exists a Lagrange multiplier $\ytrue_{I,\mu} \in \RR^q$ such that
\begin{equation}\label{eq.first_order_opt}
  \gtrue_0(x) + \Jtrue_I(x)^T \ytrue_{I,\mu} = 0,\ \ -\mu S^{-1} + \ytrue_{I,\mu} = 0,\ \ \ctrue_I(x) + s = 0,\ \ \text{and}\ \ s > 0.
\end{equation}
We remark that in a setting in which equality constraints are also present, the LICQ would not be guaranteed to hold, but a similar set of conditions would be guaranteed under a suitable constraint qualification, such as the MFCQ.

For notational convenience in our presentation and analysis of our proposed algorithm, let us introduce the combined primal iterate vector $z \in \RR^{n + q}$ as $z := [x^T\ s^T]^T$.  For a given barrier parameter $\mu$, the objective and constraint functions of~\eqref{eq.barrier} can be expressed as $\ftrue : \RR^{n+q} \to \RR$ and $\ctrue : \RR^{n+q} \to \RR^q$ where
\begin{equation*}
  \ftrue(z) = \ftrue_0(x) - \mu \sum_{i=1}^q \log(s^{(i)})\ \text{and}\ \ctrue(z) = \ctrue_I(x) + s\ \text{for all}\ z \in \RR^{n+q}.
\end{equation*}
Following \cite{CurtScheWaec10}, our algorithm employs \textit{scaled} first-order derivatives of these functions.  In particular, we define the scaled gradient function $\gtrue : \RR^{n+q} \to \RR$ and scaled Jacobian function $\Jtrue : \RR^{n+q} \to \RR^{q \times (n+q)}$ such that, for all $z \in \RR^{n+q}$,
\begin{equation*}
  \gtrue(z) = \begin{bmatrix} \gtrue_0(x) \\ -\mu e \end{bmatrix},\ \ \text{and}\ \ \Jtrue(z) = \begin{bmatrix} \Jtrue_I(x) & S \end{bmatrix},
\end{equation*}
where $e \in \RR^q$ is a vector with all entries equal to one.  (The imposed scaling is that the second block of the gradient of the objective and the second block of the constraint Jacobian are both multiplied by the diagonal matrix $S$.)  Observe that in terms of this scaled Jacobian, the conditions in \eqref{eq.feas_problem_opt_slack} are equivalent to
\begin{equation}\label{eq.feas_problem_opt_z}
  \Jtrue(z)^T\ctrue(z) = 0.
\end{equation}
Letting $f$, $g$, $c$, and $J$ denote noisy approximations of $\ftrue$, $\gtrue$, $\ctrue$, and $\Jtrue$, respectively, Assumption~\ref{assumption.err0} implies that for all $z = [x^T\ s^T]^T$ with $x \in \XXX$ one has
\begin{equation}\label{eq.errors2}
  \begin{aligned}
    |\ftrue(z) - f(z)| \leq \eps_f,\ \ \|\gtrue(z) - g(z) \| &\leq \eps_g, \\ \|\ctrue(z) - c(z)\| \leq \epsilon_c,\ \ \text{and}\ \ \|\Jtrue(z) - J(z)\| &\leq \epsilon_J.
  \end{aligned}
\end{equation}

\section{Algorithm Description, Fixed Barrier Parameter}\label{sec.algorithm}

Let us now present our main algorithm, which aims to produce at least an approximate solution of \eqref{eq.main_problem} by solving the barrier subproblem~\eqref{eq.barrier} for a fixed barrier parameter $\mu \in (0,\infty)$, or at least to yield convergence to satisfying the stationarity conditions \eqref{eq.feas_problem_opt_z}.  To describe the steps of our main algorithm, suppose that it has reached iteration $k \in \NN$, in which case the $k$th iteration proceeds as described in this section with the noisy function and derivative values $f_k := f(z_k) := f_0(x_k) - \mu \sum_{i=1}^q \log (s_k^{(i)})$, $c_k := c(z_k) := \mbox{$c_I(x_k) + s_k$}$, $g_k^T := g(z_k)^T := [g_0(x_k)^T\ -\mu e^T]^T$, and $J_k := J(z_k) := [J_I(x_k)\ S_k]$.  The corresponding noiseless values $\ftrue_k$, $\ctrue_k$, $\gtrue_k$, and $\Jtrue_k$ are defined similarly.  Our proposed algorithm is stated formally as Algorithm~\ref{alg.exact_alg} on page~\pageref{alg.exact_alg}. We remark that, as previously mentioned, the initial conditions and structure of the algorithm will ensure that $s_k \geq 0$ and $c(z_k) = c_I(x_k) + s_k \geq 0$ for all $k \in \NN$.  Let us also define, for all $k \in \NN$, the error values $\vareps_{f,k} \in \RR$, $\vareps_{g,k} \in \RR^n$, $\vareps_{c,k} \in \RR^{q}$, and $\vareps_{J,k} \in \RR^{q \times (n + q)}$ according to the equations
\begin{equation}\label{eq.errors3}
  f_k = \ftrue_k + \vareps_{f,k},\ g_k = \gtrue_k + \vareps_{g,k},\ c_k = \ctrue_k + \vareps_{c,k},\ \text{and}\ J_k = \Jtrue_k + \vareps_{J,k},
\end{equation}
where by \eqref{eq.errors2} one has $|\vareps_{f,k}| \leq \eps_f$, $\|\vareps_{g,k}\| \leq \eps_g$, $\|\vareps_{c,k}\| \leq \eps_c$, and $\|\vareps_{J,k}\| \leq \eps_J$.

The following occurs for each iteration index $k \in \NN$.  First, if $J_k^Tc_k = 0$ and $c_I(x_k) \not\leq 0$, then the algorithm has reached an infeasible stationary point (based on noisy evaluations) and it terminates; recall that the algorithm ensures $s_k \geq 0$ and $c_k = c_I(x_k) + s_k \geq 0$, so $J_k^Tc_k = 0$ (i.e., \eqref{eq.feas_problem_opt_z}) corresponds to \eqref{eq.feas_problem_opt_slack}.  Otherwise, the algorithm computes a normal step toward the satisfaction of a linear approximation of the constraint functions within a trust region.  Specifically, for a prescribed constant $\omega \in (0,\infty)$, the normal step $v_k$ is computed by solving the trust-region subproblem
\begin{equation}\label{eq.normal_tr_step}
  \min_{v \in \mathrm{range}(J_k^T)}\ \ \tfrac{1}{2} \| c_k + J_k v\|^2\ \ \st\ \ \|v\| \leq \omega \|J_k^T c_k\|.
\end{equation}
For later reference, let us define $\vtrue_k$ as the solution of \eqref{eq.normal_tr_step} (with respect to~$x_k$) if the true values $(\ctrue_k, \Jtrue_k)$ are used in place of the noisy values $(c_k, J_k)$.  Second, the algorithm computes a tangential step to minimize a local model of the objective function subject to maintaining the progress toward linearized feasibility that was attained by the normal step.  This computation employs symmetric
\begin{equation}\label{eq.W}
  W_k \gets \begin{bmatrix} H_k & 0 \\ 0 & \Sigma_k \end{bmatrix},
\end{equation}
where $H_k \in \RR^{n \times n}$ and $\Sigma_k \in \RR^{q \times q}$.  (See below for further discussion on the requirements of $W_k$ for each $k \in \NN$.)  The tangential step is defined as $u_k := d_k - v_k$, where the full step $d_k$ (i.e., normal step plus tangential step) along with a new Lagrange multiplier estimate $y_{k+1}$ is computed by solving
\begin{equation}\label{eq.perturbed_newton_sys}
  \begin{bmatrix} W_k & J_k^T \\ J_k & 0 \end{bmatrix} \begin{bmatrix} d_k \\ y_{k+1} \end{bmatrix} = \begin{bmatrix} -g_k \\ J_k v_k \end{bmatrix}.
\end{equation}
For later reference, let us define $(\utrue_k, \dtrue_k, \ytrue_{k+1})$ as the values that would have been computed (with respect to the same symmetric matrix $W_k$) if the true values $(\Jtrue_k, \gtrue_k, \vtrue_k)$ are used in place of the noisy values $(J_k, g_k, v_k)$.

A few comments on the choice of $\{W_k\}$ are in order.  First, we remark that the traditional choice in an interior-point method in the noiseless setting is for $H_k$ to be chosen as the Hessian of a Lagrangian at the current iterate, or an approximation to it.  However, for our noisy setting wherein we do not guarantee a fast rate of local convergence, we require $H_k$ to be a symmetric matrix, but do not require it to be a matrix of second-order derivatives.  (It can be constructed using noisy second-order derivative estimates, if they are available, but this is not necessary for our analysis.)  As for $\Sigma_k$, if it is chosen as $\Sigma_k = \mu I$, then this is said to be the primal approach, whereas if it is chosen as $\Sigma_k = S_k\hat Y_k$ for some multiplier vector $\hat y_k \in \RR^q$, then this is said to be the primal-dual approach.  That said, we do not restrict our analysis to any such choice of $\Sigma_k$, as long as it is diagonal and positive definite.  In addition to each element of the sequence $\{W_k\}$ defined by \eqref{eq.W} being symmetric, we also require that (a)~the linear system \eqref{eq.perturbed_newton_sys} has a unique solution for all $k \in \NN$, (b)~$W_k$ is sufficiently positive definite in the null space of $J_k$ for all $k \in \NN$, and (c)~$\{\|W_k\|\}$ is bounded.  We formalize these assumptions in Assumption~\ref{assumption.bounded} on page~\pageref{assumption.bounded}, where in fact (for simplicity) we go a bit further and assume that $W_k$ satisfies such properties also with respect to true function and derivative values.  To ensure these conditions with respect to noisy quantities in practice, during the $k$th iteration for any $k \in \NN$, the algorithm can start with initial symmetric $H_k$ and $\Sigma_k$ such that $W_k$ in \eqref{eq.W} satisfies a prescribed bound on its norm.  Then, the algorithm can determine if~\eqref{eq.perturbed_newton_sys} has a unique solution and $W_k$ is positive definite over $\mathrm{null}(J_k)$, say, by checking the inertia of the matrix in \eqref{eq.perturbed_newton_sys}; see, e.g., \cite{nocedal1999numerical}.  If not, then $H_k$ and/or $\Sigma_k$ can be replaced with a convex combination of its current value and a prescribed positive definite matrix, or it can be modified by adding a multiple of the identity matrix; see, e.g., \cite{wachter2006implementation}.  This process can be repeated iteratively until the desired conditions are satisfied.  Observe that modifying $W_k$ as discussed above guarantees that the system~\eqref{eq.perturbed_newton_sys} has a unique solution because $J_k$ has full row rank. 

Upon the computation of the normal step $v_k$, tangential step $u_k$, combined search direction $d_k$, and updated multiplier estimate $y_{k+1}$, the algorithm checks another termination condition.  If $J_k^Tc_k = 0$ (implying $c_I(x_k) \leq 0$ since the algorithm did not terminate previously) and $g_k + J_k^Ty_{k+1} = 0$, then the algorithm has reached a point at which no further progress can be made (with respect to noisy values).  This situation implies that the algorithm has reached an approximate stationary point for the barrier subproblem \eqref{eq.barrier}.  We discuss the meaning of this situation further in our analysis in Section~\ref{sec.analysis}.

If the algorithm proceeds in the $k$th iteration after checking this second termination condition and has not terminated, then it needs to determine the amount to move along the search direction, i.e., it needs to determine a positive step size.  This is done using a merit function---which balances the objective function and a constraint violation measure---whose aim is to measure progress of the algorithm toward a minimizer.  Before determining the step size, the algorithm first updates the merit parameter that weighs the contributions of the merit function terms.  For the merit function, our proposed algorithm employs $\phi : \RR^{n+q} \times (0,\infty) \to \RR$ defined by
\begin{equation*}
  \phi(z,\tau) = \tau f(z) + \| c(z) \|,
\end{equation*}
where $\tau \in (0,\infty)$ is the merit parameter that is updated dynamically.  For later reference, let us also introduce the corresponding true merit function $\phitrue : \RR^{n+q} \times (0,\infty) \to \RR$ with $\phitrue(z,\tau) = \tau \ftrue(z) + \|\ctrue(z)\|$.  To determine whether and by how much the merit parameter should be decreased, the algorithm uses a (noisy) model of $\phi$ at $x_k$, namely, $m_k : \RR^{n+q} \times (0,\infty) \to \RR$ with
\begin{equation*}
  m_k(d,\tau) = \tau (f_k + g_k^T d) + \|c_k + J_kd\|.
\end{equation*}
The change in this model corresponding to $(d_k,\tau_k) \in \RR^{n+q} \times (0,\infty)$ is measured by the model reduction function $\Delta m_k : \RR^{n+q} \times (0,\infty) \to \RR$ defined by
\begin{align}
   \Delta m_k(d_k, \tau_k) &= m_k(0,\tau_k) - m_k(d_k,\tau_k) \nonumber \\ 
   &= -\tau_k g_k^T d_k + \|c_k\| - \|c_k + J_k d_k\| \nonumber \\
   &= -\tau_k g_k^T d_k + \|c_k\| - \|c_k + J_k v_k\|, \label{eq.delta_m_tilde}
\end{align}
where the last equation follows by \eqref{eq.perturbed_newton_sys}.  For reference in our analysis, let us also note that the reduction in a true (i.e., noiseless) model of the true merit function~$\phitrue$ at $x_k$ can be defined as $\Delta \mtrue_k : \RR^{n+q} \times (0,\infty) \to \RR$ defined similarly.  This true model reduction is notable since having $\{(d_k,\tau_k)\}$ with $\tau_k \geq \tau$ for some $\tau \in (0,\infty)$ and $\{\Delta \mtrue_k(d_k,\tau_k)\} \to 0$ is central to analyses of an algorithm of our type in the noiseless setting.  We discuss this further in Section~\ref{sec.analysis}.

The goal of our update strategy of the merit parameter is to ensure that the chosen value $\tau_k \in (0,\tau_{k-1}]$ (where $\tau_{k-1} \in (0,\infty)$ is the value from the prior iteration) yields, for some prescribed $\sigma \in (0,1)$, the condition
\begin{equation}\label{eq.model_reduction_cond}
  \Delta m_k(d_k,\tau_k) \geq \tfrac12 \tau_k  u_k^T W_k u_k + \sigma (\|c_k\| - \|c_k + J_k v_k\|).
\end{equation}
(This is the same kind of approach as has been employed in the noiseless setting; see, e.g., \cite{CurtScheWaec10}.)  From \eqref{eq.normal_tr_step} and positive-definiteness of $W_k$ with respect to $\mathrm{null}(J_k)$, the right-hand side of this expression is always nonnegative, and in fact in our analysis we show that this quantity is strictly positive when the algorithm does not terminate in iteration $k$.  To determine such a value for the merit parameter $\tau_k$, the algorithm uses the trial merit parameter value
\begin{equation}\label{eq.pi_trial}
  \tau_k^{\text{trial}} \gets \begin{cases} \infty & \text{if $g_k^T d_k + \tfrac12 u_k^T W_k u_k \leq 0$} \\ \frac{(1 - \sigma)(\|c_k \| - \|c_k + J_k v_k \|)}{g_k^T d_k +  \tfrac12 u_k^T W_k u_k} & \text{otherwise}, \end{cases} 
\end{equation}
then, with prescribed $\delta_{\tau} \in (0,1)$, sets the merit parameter as
\begin{equation}\label{eq.tau}
  \tau_k = \begin{cases} \tau_{k-1} & \text{if $\tau_{k-1} \leq \tau_k^{\text{trial}}$} \\ \min\{(1-\delta_{\tau}) \tau_{k-1}, \tau_k^{\text{trial}}\} & \text{otherwise}. \end{cases}
\end{equation}
(In Lemma~\ref{lemma.well_defined}, we show that this ensures that \eqref{eq.model_reduction_cond} holds.)  For future reference, let us also define the sequence $\{\tautrue_k\}$ as the one that would have been generated in this manner if the algorithm generated the same iterate sequence $\{x_k\}$, but employed noiseless values for updating the merit parameter value for all $k \in \NN$.

Now that the value of the merit parameter has been determined, the algorithm proceeds to compute a step size to take along the primal search direction.  At this stage, it is necessary to characterize the primal search direction in terms of two components---corresponding to the variables $x$ and $s$, respectively---and the search direction corresponding to the unscaled derivatives as well.  The scaled and unscaled search directions, respectively, are
\begin{equation*}
  d_k =: \begin{bmatrix} d_k^x \\ d_k^s \end{bmatrix}\ \ \text{and}\ \ \dhat_k \gets \begin{bmatrix} d_k^x \\ S_k d_k^s \end{bmatrix},\ \ \text{where}\ \ (d_k^x,d_k^s) \in \RR^n \times \RR^q.
\end{equation*}
One rule for the step size is that it must ensure that the slack variables remain sufficiently positive.  This is done through a fraction-to-the-boundary rule.  The unscaled direction in the slack variables is the latter component of the direction~$\dhat_k$, namely, $S_k d_k^s$.  Therefore, the fraction-to-the-boundary rule involves computing the largest value of $\alpha_k^{\max}$ in $(0,1]$ such that
\begin{equation}\label{eq.foc}
  s_k + \alpha_k^{\max} S_k d_k^s \geq (1 - \eta_s) s_k,
\end{equation}
where $\eta_s \in (0, 1)$ is a prescribed parameter.  Once this maximum step size has been determined, the algorithm backtracks, if necessary, until a relaxed Armijo condition is satisfied 
\cite{berahas2019derivative,oztoprak2023constrained}.  
Defining $\vareps_k \in (0,\infty)$ for all $k \in \NN$ by $\vareps_k := \tau_k \eps_f + \eps_c$, and with $\eta_\phi \in (0,1)$ and $\zeta \in (0,\infty)$, the condition is
\begin{equation}\label{eq.armijo}
  \phi(z_k + \alpha_k  \dhat_k, \tau_k) \leq \phi(z_k, \tau_k) - \eta_\phi \alpha_k \Delta m_k (d_k, \tau_k) + (2 + \zeta) \vareps_k.
\end{equation}
(In practice, we expect one to set $\zeta$ relatively small, at least less than 1.  For example, in our numerical experiments, $\zeta \gets 0.1$.) Observe that \eqref{eq.armijo} is well defined since it is always satisfied for sufficiently small $\alpha_k \in (0,1]$ in the presence of positive $\eps_f$ and/or $\eps_c$.  Given $\alpha_k \in (0,1]$ obtained by backtracking (starting from $\alpha_{\max}$) that yields \eqref{eq.armijo}, the algorithm sets $x_{k+1} \gets x_k + \alpha_k d_k^x$ and $s_{k+1} \gets \max\{s_k + \alpha_k S_k d_k^s, -c_I(x_{k+1})\} \geq 0$, for which one finds that 
\begin{equation}\label{eq.slack_reset}
  \begin{aligned}
    c(z_{k+1}) &= c_I(x_{k+1}) + s_{k+1}\\
    &= c_I(x_{k+1}) + \max \left\{s_k + \alpha_k S_k d_k^s, - c_I(x_{k+1})\right\} \geq 0. 
  \end{aligned}
\end{equation}
It is important to observe that, after employing this update for the slack variables, which as in the literature we refer to as a slack reset, one finds that $\phi(z_{k+1},\tau_k) \leq \phi(z_k + \alpha_k  \dhat_k, \tau_k)$, meaning that \eqref{eq.armijo} holds with the left-hand side replaced by $\phi(z_{k+1},\tau_k)$.  This is clarified in the following remark.

\begin{remark}\label{remark.slackreset}
  Line~\ref{step.slack_reset} of Algorithm~\ref{alg.exact_alg} guarantees $\phi(z_{k+1},\tau_k) \leq \phi(z_k + \alpha_k  \dhat_k, \tau_k)$.  To see this, consider arbitrary $i \in \{1,\dots,q\}$.  If $[c_I(x_{k+1})]^{(i)} \geq -[s_k + \alpha_k S_k d_k^s]^{(i)}$, then it yields $s_{k+1}^{(i)} \gets [s_k + \alpha_k S_k d_k^s]^{(i)}$; otherwise, $s_{k+1}^{(i)} \gets [-c_I(x_{k+1})]^{(i)}$, in which case $[c(z_{k+1})]^{(i)} = 0$.  In any case, one finds $s_{k+1} \geq s_k + \alpha_k S_k d_k^s$, $f(z_{k+1}) \leq f\left(\begin{bmatrix} x_{k+1} \\ s_k + \alpha_k S_k d_k^s \end{bmatrix}\right)$, and $\|c(z_{k+1})\| \leq \left\|c\left(\begin{bmatrix} x_{k+1} \\ s_k + \alpha_k S_k d_k^s \end{bmatrix}\right) \right\|$, so $\phi(z_{k+1},\tau_k) \leq \phi(z_k + \alpha_k  \dhat_k, \tau_k)$, as claimed.
\end{remark}

\begin{algorithm}[ht]
  \caption{: Algorithm for Solving \eqref{eq.barrier} with Noisy Function Evaluations}
  \label{alg.exact_alg}
  \begin{algorithmic}[1]
  \Require $(x_0,s_0,y_0) \in \RR^n \times \RR^q \times \RR^q$ with $s_0 \geq 0$ and $c(z_0) = c_I(x_0) + s_0 \geq 0$; $(\eps_f,\eps_c) \in (0,\infty)^2$ from Assumption~\ref{assumption.err0}; $\tau_{-1} \in (0,\infty)$; $\omega \in (0,\infty)$; $\sigma \in (0,1)$; $\delta_{\tau} \in (0,1)$; $\eta_s \in (0,1)$; $\eta_\phi \in (0,1)$; and $\zeta \in (0,\infty)$
  \For{$k = 0, 1, \dots$}
    \State \textbf{if} $J_k^T c_k = 0$ and $c_I(x_k) \not\leq 0$ \textbf{then} \textbf{return} $(x_k,y_{k+1})$ \label{st.tm1}
    \State compute $v_k$ as the solution of \eqref{eq.normal_tr_step}
    \State choose $W_k$ satisfying (upcoming) Assumption~\ref{assumption.bounded}
    \State compute $(d_k,y_{k+1})$, where $d_k \equiv v_k + u_k \equiv \begin{bmatrix} d_k^x \\ d_k^s \end{bmatrix}$, by solving \eqref{eq.perturbed_newton_sys}
    \State \textbf{if} $J_k^T c_k = 0$ (so $c_I(x_k) \leq 0$) and $g_k + J_k^T y_{k+1} = 0$ \textbf{then} \textbf{return} $(x_k,y_{k+1})$ \label{st.tm2}
    \State set $\tau_k$ by \eqref{eq.tau} \label{step.tau}
    \State set $\alpha_k^{\max}$ as the largest value in $(0,1]$ such that \eqref{eq.foc} holds \label{step.alpha_max}
    \State set $\alpha_k \gets (\frac12)^j \alpha_k^{\max}$, where $j = \min \left\{j \in \NN: \text{\eqref{eq.armijo} holds with $\dhat_k \gets \begin{bmatrix} d_k^x \\ S_k d_k^s \end{bmatrix}$}\right\}$ \label{step.alpha}
    \State set $x_{k+1} \gets x_k + \alpha_k d_k^x$
    \State set $s_{k+1} \gets \max\{s_k + \alpha_k S_k d_k^s, -c_I(x_{k+1})\}$\label{step.slack_reset}
  \EndFor
  \end{algorithmic}
\end{algorithm}

\section{Analysis of Algorithm~\ref{alg.exact_alg}}\label{sec.analysis}

In this section, we analyze the behavior of Algorithm~\ref{alg.exact_alg} for solving \eqref{eq.barrier}.  Before starting our analysis, we discuss its main goals by remarking on the convergence guarantees that one aims to show in a noiseless setting for an algorithm such as ours.  Then, we discuss our main assumptions and show that the algorithm is well posed before proceeding to our main convergence analysis.

\subsection{Discussion of Aims of Convergence Analysis}\label{sec.discussion}

Given the fact that Algorithm~\ref{alg.exact_alg} aims to solve \eqref{eq.barrier}, but only has access to noisy function and derivative values, it is important to establish at the outset what one can hope to prove about the behavior of the algorithm.  First, we note that if for some $k \in \NN$ the algorithm terminates in line~\ref{st.tm1}, then at least with respect to noisy quantities it has reached an infeasible stationary point.  The point is not necessarily an infeasible stationary point with respect to noiseless quantities, but it is still reasonable for the algorithm to terminate since, by~\eqref{eq.errors3} and upcoming Lemma~\ref{lemma.violet_orange_olive} in our analysis, one respectively has
\begin{equation*}
  \|\ctrue_I(x_k) - c_I(x_k)\| \leq \epsilon_c\ \ \text{and}\ \ \|\Jtrue_k^T \ctrue_k - J_k^T c_k\| = O(\eps_c + \eps_J).
\end{equation*}
Similarly, if for some $k \in \NN$ the algorithm terminates in line~\ref{st.tm2}, then at least with respect to the noisy quantities it has reached a point that appears to be stationary for~\eqref{eq.barrier}.  The point is not necessarily stationary for~\eqref{eq.barrier} with respect to noiseless quantities, but it is still reasonable for the algorithm to terminate since, under the assumptions of our upcoming analysis, one can show that
\begin{equation*}
  \|\gtrue_k + \Jtrue_k^T \ytrue_{k+1} - (g_k + J_k^T y_{k+1})\| = O(\eps_c + \eps_J + \eps_g).
\end{equation*}
For the sake of brevity and since finite termination is unlikely in practice, we omit the details of this bound, but claim that it would follow under the assumptions and results in Section~\ref{sec.convergence} up through and including Corollary~\ref{lemma.ub_epskd}.

Assuming that Algorithm~\ref{alg.exact_alg} does not terminate finitely, the aims of our analysis are to prove results that are modeled on the noiseless setting.  In particular, we note the main convergence result in \cite{curtis2010matrix} and the main convergence result for a fixed barrier parameter in \cite{CurtScheWaec10}.  In both of these cases, the main aim is to show that, when the sequence of constraint Jacobians has singular values that are bounded away from zero, one has that the merit parameter remains bounded below by a positive real number and in the limit one finds
\begin{equation}\label{eq.aim}
  \{ \Delta \mtrue_k (\dtrue_k, \tautrue_k) \} \to 0,\ \text{which in turn yields}\ 
  \left\{ \left\| \begin{bmatrix} \gtrue_k + \Jtrue_k^T \ytrue_{k+1} \\ \ctrue_k \end{bmatrix} \right\| \right\} \to 0.
\end{equation}
(See, e.g., Lemma~3.18 and its proof in \cite{curtis2010matrix}.)  Otherwise (when the constraint Jacobians tend toward rank deficiency), one at least has that $\{\|\Jtrue_k^T\ctrue_k\|\} \to 0$.  In our noisy setting, the case of potential rank deficiency of the constraint Jacobians presents a practical hurdle, especially when the noisy Jacobians tend toward rank deficiency while the true Jacobians do not.  Our analysis thus focuses on the stronger results that can be proved when the constraint Jacobians do not tend toward rank deficiency and do not differ too much from their true values (see Assumptions~\ref{assumption.sigma_J} and \ref{ass.noise_level} in Section~\ref{sec.convergence}).  Overall, the main aim of our convergence analysis is to show that, under reasonable assumptions, Algorithm~\ref{alg.exact_alg} generates merit parameter values that remain bounded below by a positive number and generates iterates such that, at least for some iteration index $k \in \NN$, the value $\Delta \mtrue_k (\dtrue_k, \tautrue_k)$ (see \eqref{eq.aim}) is below a threshold that depends on the magnitude of the noise in the function and derivative values.

We also show under looser assumptions that the algorithm generates iterates such that, at least for some $k \in \NN$, the value $\|\Jtrue_k^T\ctrue_k\|^2$ is below a threshold that depends on the magnitude of the noise.  This result, which is arguably of less interest in practice, is relegated to Appendix~\ref{app.app}.

\subsection{Assumptions and Well-Posedness}\label{sec.assumptions}

Before proceeding to our analysis, let us introduce the assumptions that we make beyond Assumption~\ref{assumption.err0}.  We also make an additional assumption on the bounds on the noisy function and derivative evaluations; this is presented after Lemma~\ref{lemma.gamma_diff} since it relies on quantities introduced in that lemma.

First, we formalize our assumption pertaining to the sequence $\{W_k\}$ as follows.  Recall that this assumption with respect to the noisy quantities has already been justified in the paragraph following~\eqref{eq.perturbed_newton_sys}.  We strengthen it slightly to refer to noiseless values as well, since later on this will allow us to quantify, e.g., the difference between $d_k$ and $\dtrue_k$.  (Alternatively, we could introduce a ``true'' matrix $\Wtrue_k$ that satisfies similar properties with respect to noiseless quantities as $W_k$ does with respect to noisy ones.  However, this would add extra complication without adding significant value to our analysis.)

\begin{assumption} \label{assumption.bounded}
  \textit{
  The symmetric matrices $\{W_k\}$ defined by \eqref{eq.W} is chosen such that: (a) for all $k \in \NN$, \eqref{eq.perturbed_newton_sys} has a unique solution when either the noisy quantities $(J_k, g_k, v_k)$ or noiseless quantities $(\Jtrue_k, \gtrue_k, \vtrue_k)$ are used; (b)~for some $\sigma_W \in (0,\infty)$ and all $k \in \NN$, one has $p^T W_k p \geq \sigma_W \|p\|^2$ for all $p \in \mathrm{null}(J_k) \cup \mathrm{null}(\Jtrue_k)$; and (c) for some $\kappa_W \in (0,\infty)$ and all $k \in \NN$, one has $\|W_k\| \leq \kappa_W$.
  }
\end{assumption}

Second, while most of our analysis merely requires that the generated slack iterates have positive components, our later results require that the slack variables remain bounded.  Thus, we introduce Assumption~\ref{assumption.s_bounded}, below.  In the noiseless setting, boundedness of the slack variables can be proved by the behavior of the algorithm and boundedness of the function and derivative values, as in Assumption~\ref{assumption.err0}; see \cite{byrd2000trust,CurtScheWaec10}.  However, in our noisy setting, it happens that a similar conclusion cannot be drawn in the same way since the merit function values are not monotonically decreasing.  In any case, we contend that the following Assumption~\ref{assumption.s_bounded} is weak under Assumption~\ref{assumption.err0}.

\begin{assumption}\label{assumption.s_bounded}
  \textit{
  There exists $s_{\sup} \in (0,\infty)$ such that $\|s_k\| \leq s_{\sup}$ for all $k \in \NN$.  Thus, along with Assumption~\ref{assumption.err0}, $\{\phitrue(z_k,\tau_k) \}$ is bounded below.
  }
\end{assumption}

\noindent
Under Assumptions~\ref{assumption.err0} and \ref{assumption.s_bounded}, there exists $(g_{\sup}, J_{\sup}) \in (0, \infty)^2$ such that 
 \begin{equation}\label{eq.J_g_bnd}
   \|\gtrue_k\|\leq  g_{\sup}, \ \ \text{and} \ \  \|\Jtrue_k \|\leq  J_{\sup} \ \  \text{for all} \ \ k \in \NN.   
 \end{equation}

Third, many parts of our analysis merely require that $\Jtrue_k$ has full row rank for all $k \in \NN$, which in our setting is guaranteed since $s_k$ has positive components for all $k \in \NN$.  However, some of our results require that, at least eventually, the Jacobians have singular values that are bounded above the noise level for the Jacobian values.  Hence, let us introduce the following assumption.

\begin{assumption}\label{assumption.sigma_J}
  \textit{
  There exist $k_J \in \NN$ and $\gamma \in (\epsilon_J,\infty)$ such that, for all $k \in \NN$ with $k \geq k_J$, one has that $\sigma_{\min}(J_k) \geq \gamma$.
  }
\end{assumption}

\noindent
Under Assumptions~\ref{assumption.err0} and \ref{assumption.sigma_J}, it follows that for all $k \in \NN$ with $k \geq k_J$ one has $\sigma_{\min}(\Jtrue_k) \geq \gamma - \eps_J > 0$.  Thus, there is $(\sigmabar_J, \sigma_J) \in (0, \infty)^2$ such that, for such $k$,
\begin{equation}\label{eq.sigma_J}
   \| (\Jtrue_k \Jtrue_k^T)^{-1} \| \leq \sigmabar_J   \quad \text{and} \quad  \| (J_k J_k^T)^{-1} \| \leq \sigma_J.
\end{equation}

\noindent
It is worth emphasizing that both Assumption~\ref{assumption.sigma_J} and upcoming Assumption~\ref{ass.noise_level} impose implicit bounds on the noise level for the constraint Jacobian matrices.

Let us close this subsection by showing that, under our stated assumptions, the algorithm is well posed.  For this purpose, it is worthwhile to note that the normal step $v_k$, which is the exact solution of \eqref{eq.normal_tr_step}, satisfies the Cauchy decrease condition~\cite{nocedal1999numerical}, i.e., for some prescribed $\delta_v \in (0,1]$, one has
\begin{equation}\label{eq.cauchy_dec}
  \|c_k \| - \|c_k + J_k v_k \| \geq \delta_v (\|c_k\| - \|c_k + \alphahat_k J_k \hat v_k\|) \geq 0,
\end{equation}
where $\hat v_k := - J_k^T c_k$ and $\alphahat_k$ is a solution of the one-dimensional problem
\begin{equation}\label{eq.bar_alpha}
  \min_{\alphahat}\ \tfrac{1}{2} \|c_k + \alphahat J_k \hat v_k \|^2\ \ \st \ \ \alphahat \in [0,\omega].
\end{equation}
A Cauchy decrease condition also holds for the noiseless normal step $\vtrue_k$ with $(\ctrue_k,\Jtrue_k)$ in place of $(c_k,J_k)$ and with $\alphahat_k$ redefined accordingly.

The lemma below summarizes prior remarks to show that, by its construction, Algorithm~\ref{alg.exact_alg} is well defined in the sense that if it reaches iteration $k \in \NN$, then each step involves finite computation and the algorithm will either terminate in iteration $k$ or proceed to iteration $k+1$.  The lemma also shows that, as claimed, one finds $s_k \geq 0$ and $c(z_k) \geq 0$ for all generated $k \in \NN$.
\begin{lemma}
  Suppose that Assumptions~\ref{assumption.err0} and \ref{assumption.bounded} hold.  Then, if Algorithm~\ref{alg.exact_alg} reaches iteration $k \in \NN$ such that $s_k \geq 0$ and $c(z_k) \geq 0$, then it either terminates finitely or proceeds to iteration $k+1$ such that $s_{k+1} \geq 0$ and $c(z_{k+1}) \geq 0$.  Thus, the algorithm either terminates finitely or generates an infinite sequence of iterates, where in either case $s_k \geq 0$ and $c(z_k) \geq 0$ for all generated $k \in \NN$. 
\end{lemma}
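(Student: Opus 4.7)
The plan is to walk through a single iteration $k$ of Algorithm~\ref{alg.exact_alg} and verify, given the hypothesis $s_k\geq 0$ and $c(z_k)\geq 0$ (and implicitly $s_k>0$ componentwise, since otherwise the barrier objective $f(z_k)$ would not be finite), that each step either triggers a legitimate termination or produces a well-defined output, and that the resulting $(x_{k+1},s_{k+1})$ satisfies $s_{k+1}\geq 0$ and $c(z_{k+1})\geq 0$. The full conclusion then follows by a trivial induction on $k$ starting from the initial conditions in the Require line.

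First I would dispatch the straightforward steps. The termination tests on lines~\ref{st.tm1} and~\ref{st.tm2} are finite comparisons. The normal-step problem~\eqref{eq.normal_tr_step} minimizes a convex quadratic over a compact set, namely the intersection of a closed ball with $\mathrm{range}(J_k^T)$; moreover any $v\in\mathrm{range}(J_k^T)$ with $J_k v=0$ must satisfy $v=0$, so the objective is strictly convex on that subspace and $v_k$ is uniquely defined. A symmetric $W_k$ satisfying Assumption~\ref{assumption.bounded} is available via the iterative inertia-correction scheme described in the paragraph following~\eqref{eq.perturbed_newton_sys}, and Assumption~\ref{assumption.bounded}(a) then ensures that~\eqref{eq.perturbed_newton_sys} delivers a unique $(d_k,y_{k+1})$. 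The trial merit-parameter formula~\eqref{eq.pi_trial} together with the assignment~\eqref{eq.tau} involves only finite arithmetic, so $\tau_k$ is well defined.

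Next, for line~\ref{step.alpha_max}, the fraction-to-the-boundary constraint~\eqref{eq.foc} is vacuous at indices with $s_k^{(i)}=0$ (both sides vanish) and at the remaining indices reduces to $\alpha\, d_k^{s,(i)}\geq -\eta_s$, which determines a largest $\alpha_k^{\max}\in(0,1]$. The key subtlety is the backtracking loop on line~\ref{step.alpha}: I would argue that because the fraction-to-the-boundary rule keeps $s_k+\alpha S_k d_k^s$ bounded away from zero for $\alpha\in[0,\alpha_k^{\max}]$, the map $\alpha\mapsto \phi(z_k+\alpha\dhat_k,\tau_k)$ is continuous at $\alpha=0$; hence the difference between the two sides of~\eqref{eq.armijo} tends to $-(2+\zeta)\vareps_k$ as $\alpha\downarrow 0$. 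Since $\vareps_k=\tau_k\eps_f+\eps_c>0$ by Assumption~\ref{assumption.err0}, this difference is strictly negative, so the halving loop terminates after finitely many iterations.

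Finally, the nonnegativity preservation follows from the slack reset: because $\alpha_k\leq \alpha_k^{\max}$, the fraction-to-the-boundary rule gives $s_k+\alpha_k S_k d_k^s\geq(1-\eta_s)s_k\geq 0$, so line~\ref{step.slack_reset} yields $s_{k+1}\geq s_k+\alpha_k S_k d_k^s\geq 0$ and $s_{k+1}\geq -c_I(x_{k+1})$, which together give $c(z_{k+1})=c_I(x_{k+1})+s_{k+1}\geq 0$. I expect the only real obstacle to be the Armijo-termination argument: the additive noise cushion $(2+\zeta)\vareps_k$ in the relaxed Armijo condition is doing essential work, since without it one cannot rule out the scenario in which the noise terms $\vareps_{f,k}$ and $\vareps_{c,k}$ conspire to make the standard Armijo inequality fail for every positive $\alpha$. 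Every other step is either a direct appeal to Assumption~\ref{assumption.bounded} or a routine bookkeeping verification.
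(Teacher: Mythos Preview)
Your approach is essentially the same as the paper's: step through the iteration, verify each line is well defined, and invoke the slack reset for the nonnegativity invariants. The paper's proof is terser but identical in structure.

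One technical slip to flag in your Armijo-termination argument: you assert that $\alpha\mapsto\phi(z_k+\alpha\dhat_k,\tau_k)$ is continuous at $\alpha=0$, citing only that the slack components stay bounded away from zero. That handles the log-barrier term, but Assumption~\ref{assumption.err0} does not impose continuity on the noisy evaluations $f_0$ and $c_I$ (the paper explicitly allows them to be discontinuous), so the noisy merit function $\phi$ need not be continuous in $\alpha$. The correct route is the one you hint at in your closing paragraph: use $|\phi-\phitrue|\leq\vareps_k$ to get
\[
\phi(z_k+\alpha\dhat_k,\tau_k)-\phi(z_k,\tau_k)\leq \phitrue(z_k+\alpha\dhat_k,\tau_k)-\phitrue(z_k,\tau_k)+2\vareps_k,
\]
then use continuity of the \emph{noiseless} $\phitrue$ (which does follow from Assumption~\ref{assumption.err0} together with the fraction-to-the-boundary bound on the slacks) to conclude that the left side is below $(2+\zeta)\vareps_k$ for all small enough $\alpha$. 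The $\zeta$-cushion is indeed what absorbs the residual $2\vareps_k$, exactly as you diagnose. With that correction your argument is complete.
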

\begin{proof}
  Suppose that Algorithm~\ref{alg.exact_alg} reaches iteration $k \in \NN$ such that $s_k \geq 0$ and $c(z_k) \geq 0$.  First, if it terminates in line~\ref{st.tm1}, then there is nothing left to prove.  Hence, we may proceed under the assumption that it continues, in which case one finds that \eqref{eq.normal_tr_step} is feasible and there exists $v_k \in \text{range}(J_k^T)$ that is feasible for \eqref{eq.normal_tr_step} and satisfies \eqref{eq.cauchy_dec}, meaning that the computation of $v_k$ is well defined.  It then follows under Assumption~\ref{assumption.bounded} that the computation of $u_k$ and $(d_k,y_{k+1})$ is well defined.  Next, either the algorithm terminates in line~\ref{st.tm2} or it reaches line~\ref{step.tau}, in which case it is guaranteed to reach line~\ref{step.alpha_max}.  Since $\eta_s \in (0,1)$, line~\ref{step.alpha_max} is well defined.  Similarly, since $\vareps_k \in (0,\infty)$, it follows that \eqref{eq.armijo} holds for sufficiently small~$\alpha_k$, meaning that line~\ref{step.alpha} is well defined.  It follows that line~\ref{step.slack_reset} is reached, which guarantees that $s_{k+1} \geq 0$ and $c(z_{k+1}) \geq 0$.
\end{proof}

\subsection{Convergence Analysis}\label{sec.convergence}

Let us now proceed to the main part of our analysis.  Since we have already discussed the consequences of finite terminate of the algorithm in Section~\ref{sec.discussion}, our analysis in this section presumes that the algorithm does not terminate.

Let us define for all $k \in \NN$ the errors
\begin{equation}\label{eq.errors_k}
  \epskv := v_k - \vtrue_k,\ \ \epsku := u_k - \utrue_k,\ \ \epskd := d_k - \dtrue_k,\ \ \text{and}\ \ \epsktau := \tau_k - \tautrue_k.
\end{equation}
Our preliminary analysis, prior to our main results, focuses on providing bounds on these errors in terms of the bounds on the errors from \eqref{eq.errors3}.  Observe that, since $\{\tau_k\}$ and $\{\tautrue_k\}$ are monotonically nonincreasing and bounded below by zero, it follows trivially that $\epsktau \leq \tau_0$ for all $k \in \NN$.

Our next few lemmas focus on properties of the constraint function value, constraint Jacobian value, and normal step.  The lemmas involve properties of these quantities as well as differences between noisy and noiseless values.

\begin{lemma}\label{lemma.violet_orange_olive}
 Suppose Assumptions~\ref{assumption.err0} and \ref{assumption.bounded} hold. Then, for all $k \in \NN$,
 \begin{subequations}
 \begin{align}
    \|J_k^T J_k - \Jtrue_k^T \Jtrue_k \| &\leq \eps_J ( \eps_J + 2 \|\Jtrue_k\| ), \label{eq.green0} \\
   \|J_k J_k^T - \Jtrue_k \Jtrue_k^T \| &\leq \eps_J ( \eps_J + 2 \|\Jtrue_k\| ), \label{eq.violet0} \\
   \|(J_k J_k^T)^{1/2} - (\Jtrue_k \Jtrue_k^T)^{1/2} \| &\leq (\eps_J ( \eps_J + 2 \|\Jtrue_k\| ) )^{1/2}, \label{eq.orange0} \\
   \text{and}\ \ \|\Jtrue_k^T \ctrue_k - J_k^T c_k\| &\leq \|\Jtrue_k\| \eps_c + (\|\ctrue_k\| + \eps_c) \eps_J. \label{eq.olive0}
  \end{align}
  \end{subequations}
\end{lemma}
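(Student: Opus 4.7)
My plan is to prove each of the four inequalities by direct algebraic manipulation using the noise decompositions $J_k = \Jtrue_k + \vareps_{J,k}$ and $c_k = \ctrue_k + \vareps_{c,k}$ from \eqref{eq.errors3}, together with the bounds $\|\vareps_{J,k}\| \leq \eps_J$ and $\|\vareps_{c,k}\| \leq \eps_c$ from \eqref{eq.errors2}. Three of the four bounds are straightforward; the only subtlety lies in (\ref{eq.orange0}), which requires an operator-monotonicity fact.

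For \eqref{eq.green0}, I would substitute $J_k = \Jtrue_k + \vareps_{J,k}$ and expand:
\begin{equation*}
  J_k^T J_k - \Jtrue_k^T \Jtrue_k \;=\; \Jtrue_k^T \vareps_{J,k} + \vareps_{J,k}^T \Jtrue_k + \vareps_{J,k}^T \vareps_{J,k},
\end{equation*}
and then apply submultiplicativity of $\|\cdot\|$ together with the bound $\|\vareps_{J,k}\| \leq \eps_J$ to obtain $\|J_k^T J_k - \Jtrue_k^T \Jtrue_k\| \leq 2\eps_J \|\Jtrue_k\| + \eps_J^2$. The bound \eqref{eq.violet0} follows by the identical argument applied to $J_k J_k^T - \Jtrue_k \Jtrue_k^T$.

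For \eqref{eq.orange0}, the key tool is the Powers--Størmer-type inequality for positive semidefinite matrices, which gives $\|A^{1/2} - B^{1/2}\|^2 \leq \|A - B\|$ in the spectral norm. Applying this with $A = J_k J_k^T$ and $B = \Jtrue_k \Jtrue_k^T$ (both positive semidefinite), and then invoking \eqref{eq.violet0}, yields the stated bound. I expect this to be the only step that might warrant a brief justification or citation, since the other three bounds are purely algebraic.

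Finally, for \eqref{eq.olive0}, I would use the additive-subtractive decomposition
\begin{equation*}
  \Jtrue_k^T \ctrue_k - J_k^T c_k \;=\; \Jtrue_k^T (\ctrue_k - c_k) + (\Jtrue_k - J_k)^T c_k,
\end{equation*}
then apply the triangle inequality and submultiplicativity to get the bound $\|\Jtrue_k\| \eps_c + \|c_k\| \eps_J$, and finish by noting that $\|c_k\| \leq \|\ctrue_k\| + \eps_c$ by the triangle inequality and \eqref{eq.errors2}. The main reason for choosing this particular decomposition (rather than the symmetric one that would leave $\|J_k\|$ on the right-hand side) is precisely to match the form of the stated bound, which uses $\|\Jtrue_k\|$ and $\|\ctrue_k\|$ rather than their noisy counterparts; this matters because later results will wish to invoke the uniform bounds on true quantities provided by Assumption~\ref{assumption.err0}.
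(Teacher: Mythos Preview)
Your proposal is correct and follows essentially the same route as the paper: both use add-and-subtract (or, equivalently, direct expansion of $J_k=\Jtrue_k+\vareps_{J,k}$) for \eqref{eq.green0}, \eqref{eq.violet0}, and \eqref{eq.olive0}, and both invoke the spectral-norm $\tfrac12$-H\"older continuity of the matrix square root for \eqref{eq.orange0}. The only cosmetic difference is that the paper phrases \eqref{eq.orange0} as ``$\tfrac12$-H\"older continuity of the square root operator'' rather than a Powers--St{\o}rmer-type inequality; the underlying estimate $\|A^{1/2}-B^{1/2}\|\le\|A-B\|^{1/2}$ for positive semidefinite $A,B$ is the same.
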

\begin{proof}
  Consider arbitrary $k \in \NN$.  By Assumption~\ref{assumption.err0}, the triangle inequality, and submultiplicity of the matrix 2-norm, one finds that
   \begin{align*}
     &\ \left\|J_k^T J_k - \Jtrue_k^T \Jtrue_k \right\| = \left\| J_k^T J_k - J_k^T \Jtrue_k + J_k^T \Jtrue_k - \Jtrue_k^T \Jtrue_k \right\| \\
     \leq&\ \|J_k^T\| \eps_J + \|\Jtrue_k\| \eps_J \leq (\eps_J + \|\Jtrue_k^T\|) \eps_J + \|\Jtrue_k\| \eps_J = \eps_J ( \eps_J + 2 \|\Jtrue_k\| ),
  \end{align*} 
which gives~\eqref{eq.green0}.  The proof of inequality \eqref{eq.violet0} is similar to that of~\eqref{eq.green0}.  Next, by $\tfrac12$-H\"older continuity of the square root operator for symmetric positive definite matrices and the fact that the matrix 2-norm is unitarily invariant, inequality~\eqref{eq.orange0} follows from inequality~\eqref{eq.green0}.  Lastly, by \eqref{eq.errors3}, one finds
  \begin{equation*}
    \|\Jtrue_k^T \ctrue_k - J_k^T c_k \| \leq \|\Jtrue_k^T \ctrue_k - \Jtrue_k^T c_k + \Jtrue_k^T c_k - J_k^T c_k \| \leq \|\Jtrue_k\| \eps_c + (\|\ctrue_k\| + \eps_c) \eps_J, 
  \end{equation*} 
  which gives \eqref{eq.olive0}.
\end{proof}

The following useful lemma is well known.  Its proof can be found in \cite{byrd2000trust}.

\begin{lemma}\label{eq.prelim_alpha}
  Given $a \in \RR$, $b \in [0,\infty)$, and $\omega \in (0,\infty)$, the optimal value $\Phi^*$ of
  \begin{equation*}
    \min_{z \in [0,\omega)}\ \Phi(z) \equiv \tfrac{1}{2} z^2 a - z b\ \ \text{satisfies}\ \ \Phi^* \leq -\tfrac{b}{2} \min \{\tfrac{b}{|a|}, \omega\}.
  \end{equation*}
\end{lemma}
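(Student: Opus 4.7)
The plan is to construct an explicit test point $z^* := \min\{b/|a|,\omega\}$ (using the convention $b/0 = +\infty$, so that $z^* = \omega$ when $a = 0$) and then verify two things: (i)~$\Phi(z^*) \leq -\tfrac{b}{2} z^*$, which matches the right-hand side of the claimed bound once the definition of $z^*$ is unpacked; and (ii)~$\Phi^* \leq \Phi(z^*)$. For (ii), if $z^* < \omega$ then $z^* \in [0,\omega)$ and the inequality is immediate, while if $z^* = \omega$ it follows from continuity of $\Phi$ via $\Phi^* \leq \lim_{z \to \omega^-} \Phi(z) = \Phi(\omega)$.

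For step (i) I would split on the sign of $a$. When $a > 0$, the defining property $z^* \leq b/a$ gives $z^* a \leq b$, and multiplying through by $z^*/2 \geq 0$ yields $\tfrac{1}{2}(z^*)^2 a \leq \tfrac{1}{2} z^* b$; hence
\begin{equation*}
  \Phi(z^*) = \tfrac{1}{2}(z^*)^2 a - z^* b \leq \tfrac{1}{2} z^* b - z^* b = -\tfrac{1}{2} z^* b.
\end{equation*}
When $a \leq 0$, the term $\tfrac{1}{2}(z^*)^2 a$ is nonpositive, so $\Phi(z^*) \leq -z^* b \leq -\tfrac{1}{2} z^* b$ since $b \geq 0$ and $z^* \geq 0$. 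Combining (i) with (ii) and substituting the definition of $z^*$ recovers the stated bound.

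The argument is essentially a short case split once the test point has been identified, and no substantive obstacle arises. The only minor subtleties are the formal treatment of $a = 0$ (handled by the $b/0 = +\infty$ convention, which forces $z^* = \omega$) and the fact that the infimum over the half-open interval $[0,\omega)$ may fail to be attained when the ``best'' test point lies at $z^* = \omega$, which is handled by continuity of $\Phi$ as in step (ii).
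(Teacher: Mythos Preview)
Your proof is correct. The paper does not actually supply its own argument for this lemma; it simply states that the result is well known and cites \cite{byrd2000trust}. Your explicit test-point construction with the case split on the sign of $a$, together with the continuity observation handling the half-open interval when $z^*=\omega$, is a clean self-contained verification.
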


Lemma~\ref{eq.prelim_alpha} can be used to prove our next lemma.  A proof of a similar result, with less detail on the form of the lower bound that is proved, can be found in \cite{curtis2010matrix}.  We provide a proof here to show the precise form of the lower bound.

\begin{lemma}\label{lemma.prelim_cauchy_det}
  Suppose Assumptions~\ref{assumption.err0} and \ref{assumption.bounded} hold.  Then, for all $k \in \NN$,
 \begin{equation*}
   \|\ctrue_k\| (\|\ctrue_k \| - \|\ctrue_k + \Jtrue_k \vtrue_k \|) \geq \xione\|\Jtrue_k^T \ctrue_k\|^2,\ \text{where}\ \xione= \tfrac{1}{2} \delta_v \min \left\{\frac{1}{\|\Jtrue_k^T \Jtrue_k\|},  \omega \right\}.
  \end{equation*}
\end{lemma}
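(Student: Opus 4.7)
The plan is to combine the Cauchy decrease condition for the noiseless normal step (the analogue of \eqref{eq.cauchy_dec} referred to just after \eqref{eq.bar_alpha}) with an explicit analysis of the one-dimensional Cauchy-point subproblem via Lemma~\ref{eq.prelim_alpha}. The slight complication is that \eqref{eq.cauchy_dec} is stated in terms of norm reductions (linear), whereas Lemma~\ref{eq.prelim_alpha} naturally yields a reduction in squared norms (quadratic). Converting between the two is where the leading factor of $\|\ctrue_k\|$ in the statement comes in.

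First, I would observe that the noiseless Cauchy decrease yields $0 \leq \|\ctrue_k + \Jtrue_k \vtrue_k\| \leq \|\ctrue_k\|$ and, analogously, $0 \leq \|\ctrue_k + \alphahat_k \Jtrue_k \hat v_k\| \leq \|\ctrue_k\|$, where $\hat v_k = -\Jtrue_k^T \ctrue_k$ and $\alphahat_k$ minimizes $\tfrac12\|\ctrue_k + \alphahat \Jtrue_k \hat v_k\|^2$ over $\alphahat \in [0,\omega]$ (the noiseless analogue of \eqref{eq.bar_alpha}). Using the identity $a^2 - b^2 = (a-b)(a+b)$ together with $a + b \leq 2a$ whenever $0 \leq b \leq a$, this yields the conversion
\begin{equation*}
  \|\ctrue_k\|\bigl(\|\ctrue_k\| - \|\ctrue_k + \Jtrue_k \vtrue_k\|\bigr) \geq \tfrac{1}{2}\bigl(\|\ctrue_k\|^2 - \|\ctrue_k + \Jtrue_k \vtrue_k\|^2\bigr),
\end{equation*}
and similarly with $\vtrue_k$ replaced by $-\alphahat_k \Jtrue_k^T \ctrue_k$.

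Next, I would expand the quadratic
\begin{equation*}
  \Phi(\alphahat) := \tfrac{1}{2}\|\ctrue_k - \alphahat \Jtrue_k \Jtrue_k^T \ctrue_k\|^2 - \tfrac{1}{2}\|\ctrue_k\|^2 = \tfrac{1}{2}\alphahat^2 \|\Jtrue_k \Jtrue_k^T \ctrue_k\|^2 - \alphahat \|\Jtrue_k^T \ctrue_k\|^2
\end{equation*}
and apply Lemma~\ref{eq.prelim_alpha} with $a = \|\Jtrue_k \Jtrue_k^T \ctrue_k\|^2$, $b = \|\Jtrue_k^T \ctrue_k\|^2$, to obtain
\begin{equation*}
  -\Phi(\alphahat_k) \geq \tfrac{1}{2}\|\Jtrue_k^T \ctrue_k\|^2 \min\!\left\{\frac{\|\Jtrue_k^T \ctrue_k\|^2}{\|\Jtrue_k \Jtrue_k^T \ctrue_k\|^2},\ \omega\right\}.
\end{equation*}
Setting $w = \Jtrue_k^T \ctrue_k$ and using $\|\Jtrue_k w\|^2 = w^T \Jtrue_k^T \Jtrue_k w \leq \|\Jtrue_k^T \Jtrue_k\|\,\|w\|^2$, the ratio inside the min is bounded below by $1/\|\Jtrue_k^T \Jtrue_k\|$, hence
\begin{equation*}
  -\Phi(\alphahat_k) \geq \tfrac{1}{2}\|\Jtrue_k^T \ctrue_k\|^2 \min\!\left\{\tfrac{1}{\|\Jtrue_k^T \Jtrue_k\|},\ \omega\right\}.
\end{equation*}

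Finally, I would chain everything together: the noiseless Cauchy decrease condition gives $\|\ctrue_k\| - \|\ctrue_k + \Jtrue_k \vtrue_k\| \geq \delta_v(\|\ctrue_k\| - \|\ctrue_k + \alphahat_k \Jtrue_k \hat v_k\|)$; multiplying by $\|\ctrue_k\| \geq 0$, then invoking the norm-to-squared-norm conversion above with the Cauchy step, and substituting the lower bound on $-\Phi(\alphahat_k)$, yields precisely the claimed inequality with $\xione = \tfrac{1}{2}\delta_v \min\{1/\|\Jtrue_k^T \Jtrue_k\|,\ \omega\}$. The only part requiring care is ensuring that the quadratic $\Phi$ corresponds to the same domain $\alphahat \in [0,\omega]$ used to define $\alphahat_k$ in the noiseless analogue of \eqref{eq.bar_alpha}; no additional scaling of $\omega$ by $\|\Jtrue_k^T \ctrue_k\|$ is needed because \eqref{eq.bar_alpha} already uses $[0,\omega]$.
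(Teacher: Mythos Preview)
Your proposal is correct and follows essentially the same approach as the paper: apply Lemma~\ref{eq.prelim_alpha} to the one-dimensional Cauchy subproblem, bound the ratio $\|\Jtrue_k^T\ctrue_k\|^2/\|\Jtrue_k\Jtrue_k^T\ctrue_k\|^2$ below by $1/\|\Jtrue_k^T\Jtrue_k\|$, convert from squared-norm to norm reduction via $2a(a-b)\geq a^2-b^2$ (your $(a-b)(a+b)\leq 2a(a-b)$ is the same inequality), and chain through the Cauchy decrease condition. The only minor addition in the paper is an explicit treatment of the degenerate case $\|\Jtrue_k^T\ctrue_k\|=0$, where the ratio inside the $\min$ is undefined but the claimed inequality holds trivially.
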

\begin{proof}
  Consider arbitrary $k \in \NN$ and recall that $\vtrue_k$ is the solution of \eqref{eq.normal_tr_step} when $(c_k,J_k)$ is replaced by $(\ctrue_k,\Jtrue_k)$. If $\|\Jtrue_k^T \ctrue_k\| = 0$, then $\vtrue_k = 0$, so the desired conclusion follows trivially.  Hence, we may proceed under the assumption that $\|\Jtrue_k^T \ctrue_k\| > 0$.  By \eqref{eq.bar_alpha} with respect to noiseless quantities, for which the steepest-descent direction is $\hat v_k = -\Jtrue_k^T\ctrue_k$, Cauchy decrease is attained with
  \begin{equation*}
    \hat\alpha_k \in \arg\min_{\hat\alpha \in [0,\omega]}\ \tfrac{1}{2} \hat\alpha^2 \|\Jtrue_k \Jtrue_k^T \ctrue_k\|^2 - \hat\alpha \|\Jtrue_k^T \ctrue_k\|^2.
  \end{equation*}
  Hence, by Lemma~\ref{eq.prelim_alpha} and $\|\Jtrue_k^T \ctrue_k\|^2 \|\Jtrue_k^T \Jtrue_k\| \geq {\|\Jtrue_k \Jtrue_k^T \ctrue_k\|^2}$, it follows that
  \begin{align}
    \tfrac{1}{2} ( \|\ctrue_k + \hat\alpha_k \Jtrue_k \hat{v}_k \|^2 - \|\ctrue_k\|^2) &= \tfrac{1}{2} \hat\alpha_k^2 \|\Jtrue_k \Jtrue_k^T \ctrue_k\|^2 - \hat\alpha_k \|\Jtrue_k^T \ctrue_k \|^2 \nonumber \\
    &\leq -\tfrac{1}{2} \|\Jtrue_k^T \ctrue_k\|^2 \min \left\{\frac{\|\Jtrue_k^T \ctrue_k\|^2}{\|\Jtrue_k \Jtrue_k^T \ctrue_k\|^2},  \omega \right\} \nonumber \\
    &\leq -\tfrac{1}{2} \|\Jtrue_k^T \ctrue_k\|^2 \min \left\{\frac{1}{\|\Jtrue_k^T \Jtrue_k\|},  \omega \right\}. \label{eq.cauchy_dec_lb}
  \end{align}
  Since $\vtrue_k$ satisfies Cauchy decrease (see \eqref{eq.cauchy_dec}) with respect to noiseless quantities and since $2 a (a - b) \geq a^2 - b^2$ for arbitrary $(a,b) \in \RR \times \RR$, one finds that
  \begin{align*}
    \|\ctrue_k\| ( \|\ctrue_k \| - \|\ctrue_k + \Jtrue_k \vtrue_k \|) 
    &\geq \delta_v \|\ctrue_k\| ( \|\ctrue_k \| - \|\ctrue_k + \hat\alpha_k \Jtrue_k \hat{v}_k \|) \\
    &\geq  \tfrac{1}{2} \delta_v(\|\ctrue_k\|^2 - \|\ctrue_k + \hat\alpha_k \Jtrue_k \hat{v}_k\|^2 ) \\
    &\geq  \tfrac{1}{2}  \delta_v\|\Jtrue_k^T \ctrue_k \|^2 \min\left\{ \frac{1}{\|\Jtrue_k^T \Jtrue_k\|}, \omega\right\},
  \end{align*}
  which is the desired conclusion.
\end{proof}

We now show that, under all of our aforementioned assumptions, the difference between the noisy and noiseless normal step is bounded uniformly for all sufficiently large $k \in \NN$.  The proof follows a similar argument as in~\cite{BeraShiZhou}.

\begin{lemma}\label{lemma.ub_epskv}
  Suppose Assumptions~\ref{assumption.err0}, \ref{assumption.bounded}, \ref{assumption.s_bounded}, and \ref{assumption.sigma_J} hold.  Then, there exists $\eps_v \in [0, \infty)$ such that for all $k \in \NN$ with $k \geq k_J$ one has
  \begin{equation}\label{eq.ub_epskv}
    \|\vtrue_k - v_k\| \leq \eps_v.
  \end{equation}
  In addition, \eqref{eq.ub_epskv} holds with $\eps_v \to 0$ as $(\eps_c,\eps_J) \searrow (0,0)$.
\end{lemma}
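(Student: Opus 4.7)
I plan to write both normal steps in closed form via the KKT conditions of \eqref{eq.normal_tr_step}, then bound their difference by matrix-perturbation estimates. Since $\sigma_{\min}(J_k) \geq \gamma > 0$ for $k \geq k_J$ by Assumption~\ref{assumption.sigma_J} and consequently $\sigma_{\min}(\Jtrue_k) \geq \gamma - \eps_J > 0$, both $J_k$ and $\Jtrue_k$ have full row rank. Parameterizing $v = J_k^T w$ in \eqref{eq.normal_tr_step} reduces the subproblem to a strongly convex (in $w$) quadratic trust-region problem, whose unique KKT point yields
$$v_k = -J_k^T(J_k J_k^T + \lambda_k I)^{-1} c_k$$
for some $\lambda_k \in [0,\infty)$, with the analogous expression holding for $\vtrue_k$ with multiplier $\ttt_k \in [0,\infty)$ and data $(\Jtrue_k,\ctrue_k)$ in place of $(J_k,c_k)$.

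I would first handle the ``inactive'' regime where $\lambda_k = \ttt_k = 0$ by telescoping
\begin{align*}
v_k - \vtrue_k
= {}& -(J_k^T - \Jtrue_k^T)(J_k J_k^T)^{-1} c_k \\
   & - \Jtrue_k^T\bigl[(J_k J_k^T)^{-1} - (\Jtrue_k\Jtrue_k^T)^{-1}\bigr] c_k - \Jtrue_k^T(\Jtrue_k\Jtrue_k^T)^{-1}(c_k - \ctrue_k),
\end{align*}
and applying the identity $A^{-1} - B^{-1} = A^{-1}(B - A) B^{-1}$ together with \eqref{eq.violet0} of Lemma~\ref{lemma.violet_orange_olive}, the bounds \eqref{eq.sigma_J} of Assumption~\ref{assumption.sigma_J}, the bounds \eqref{eq.J_g_bnd}, and the uniform bound on $\|c_k\|$ implied by Assumptions~\ref{assumption.err0} and \ref{assumption.s_bounded}. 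Each summand then lies in $O(\eps_J + \eps_c)$, which vanishes as $(\eps_c,\eps_J) \searrow (0,0)$.

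The main obstacle is the ``active'' regime where one or both of $\lambda_k, \ttt_k$ are positive: the closed-form expressions remain valid, but the analogous telescoping produces an additional term proportional to $(\ttt_k - \lambda_k)$, whose control requires an estimate on $|\lambda_k - \ttt_k|$. To handle this I would use the complementarity relation $\|v_k\| = \omega\|J_k^T c_k\|$ (when active) together with closeness of the trust-region radii via \eqref{eq.olive0}, showing that whenever both trust regions are active the multipliers differ by an amount vanishing with $(\eps_J,\eps_c)$; in the ``mixed'' case (say $\lambda_k > 0$ but $\ttt_k = 0$), I would argue that $\vtrue_k$ is within $O(\eps_J + \eps_c)$ of the boundary of the noisy trust region, so that a small perturbation of $\vtrue_k$ along $\mathrm{range}(J_k^T)$ produces a feasible competitor against which to invoke strong convexity of $v \mapsto \tfrac{1}{2}\|c_k + J_k v\|^2$ on $\mathrm{range}(J_k^T)$ (with modulus at least $\gamma^2$ by Assumption~\ref{assumption.sigma_J}). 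Assembling all contributions yields a uniform bound $\|v_k - \vtrue_k\| \leq \eps_v$ with $\eps_v \to 0$ as $(\eps_c,\eps_J) \searrow (0,0)$, as required.
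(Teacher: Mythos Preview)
Your approach is genuinely different from the paper's and, with additional work, can be made rigorous.  The paper avoids any case split on activity of the trust region.  It parameterizes $v_k = J_k^T t_k$ as you do, but then introduces the change of variables
\[
  \hat t \;=\; \tfrac{\|J_k^T c_k\|}{\|\Jtrue_k^T\ctrue_k\|}\,(J_kJ_k^T)^{-1/2}(\Jtrue_k\Jtrue_k^T)^{1/2}\,\bar t,
\]
which maps the \emph{noiseless} trust-region ball onto the \emph{noisy} one.  The noisy and (transformed) noiseless problems then share the same feasible set, and a variational-inequality argument for the two projections (in the respective weighted norms) yields $\|t^* - \hat t^*\|$ in terms of $(\eps_c,\eps_J)$ directly; a separate estimate on $\|\bar t^* - \hat t^*\|$ handles the change-of-variable distortion.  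This buys uniformity without ever introducing the Lagrange multipliers $\lambda_k,\ttt_k$.

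Your route via KKT closed forms and an inactive/mixed/both-active trichotomy is more classical.  The inactive case is airtight as written.  The mixed-case idea is sound: if $\ttt_k=0$ and $\lambda_k>0$, the unconstrained noisy minimizer $v_k^{\mathrm{unc}}$ lies within $O(\eps_c+\eps_J)$ of both $\vtrue_k$ (by your telescoping) and the noisy trust-region boundary, so the rescaled point $\tfrac{\omega\|J_k^Tc_k\|}{\|v_k^{\mathrm{unc}}\|}\,v_k^{\mathrm{unc}}$ is feasible and strong convexity (modulus $\gamma^2$) of $v\mapsto \tfrac12\|c_k+J_kv\|^2$ on $\mathrm{range}(J_k^T)$ closes the gap.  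The both-active case, however, is underspecified.  Closeness of trust-region radii via \eqref{eq.olive0} does not by itself give $|\lambda_k-\ttt_k|\to 0$: you must also (i) verify a uniform lower bound on $|\partial_\lambda\,\psi(\lambda)|$ where $\psi(\lambda)=\|J_k^T(J_kJ_k^T+\lambda I)^{-1}c_k\|^2$, which follows from $\sigma_{\min}(J_k)\geq\gamma$ together with an a~priori upper bound $\lambda_k\leq 1/\omega$ obtained from $\psi(\lambda_k)=\omega^2\|J_k^Tc_k\|^2$; and (ii) treat separately the regime where $\|J_k^Tc_k\|$ is small, since there the normalized secular equation degenerates (in that regime both $\|v_k\|$ and $\|\vtrue_k\|$ are small, so $\|v_k-\vtrue_k\|$ is controlled trivially).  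Once these are in place your telescoping with the extra $(\lambda_k-\ttt_k)$ term goes through, though the resulting $\eps_v$ will vanish at a slower rate in $(\eps_c,\eps_J)$ than the paper's more direct argument.
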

\begin{proof}
  Consider arbitrary $k \in \NN$ with $k \geq k_J$.  Since \eqref{eq.normal_tr_step} requires $v \in \mathrm{range}(J_k^T)$, one can express the solution of \eqref{eq.normal_tr_step} as $v_k = J_k^Tt_k$, where $t_k$ solves
  \begin{equation}\label{eq.normal_tr_step_cov_1}
    \min_{t \in \RR^q}\ \ \tfrac{1}{2} \| c_k + J_k J_k^T t  \|^2\ \ \st\ \ \|J_k^T t \| \leq \omega \|J_k^T c_k\|.
  \end{equation}
  Let us now define, as allowed under Assumption~\ref{assumption.sigma_J}, the change of variables
  \begin{equation}\label{eq.tbar_that}
   \that := \JCfrac (J_k J_k^T)^{-1/2} (\Jtrue_k \Jtrue_k^T)^{1/2} \tbar.
  \end{equation}
  Since $\|(J_k J_k^T)^{1/2} \that\| = \|J_k^T \that\|$ and $\|(\Jtrue_k \Jtrue_k^T)^{1/2} \tbar\| = \|\Jtrue_k^T \tbar\|$, it follows that $\|J_k^T \that\| = \JCfrac \| \Jtrue_k^T \tbar\|$.  Now considering \eqref{eq.normal_tr_step_cov_1} with respect to the noiseless quantities, but replacing the variable $\tbar$ with $\tfrac{\|\Jtrue_k^T \ctrue_k\|}{\|J_k^T c_k\|}  (\Jtrue_k \Jtrue_k^T)^{-1/2} (J_k J_k^T)^{1/2} \that$, one obtains
  \begin{equation}\label{eq.normal_tr_step_det_cov_2}
    \min_{\that \in \RR^q}\ \ \tfrac{1}{2} \| \ctrue_k + \tfrac{\|\Jtrue_k^T \ctrue_k\|}{\|J_k^T c_k\|} (\Jtrue_k \Jtrue_k^T)^{1/2} (J_k J_k^T)^{1/2} \that \|^2\ \ \st\ \ \|J_k^T \that \| \leq \omega \|J_k^T c_k\|.
  \end{equation}
  The feasible regions of~\eqref{eq.normal_tr_step_cov_1} and \eqref{eq.normal_tr_step_det_cov_2} are identical.  Thus, let us denote the feasible region of these problems as the nonempty, closed, and convex set $\Pi := \{t \in \RR^q: \|J_k^T t\| \leq \omega \|J_k^T c_k\|\}$.  Problems~\eqref{eq.normal_tr_step_cov_1} and \eqref{eq.normal_tr_step_det_cov_2} can now be written respectively as 
  \begin{align}
    &\min_{t \in \Pi}\ \ \tfrac{1}{2} \| (J_k J_k^T)^{-1} c_k +  t  \|_{(J_k J_k^T)^2}^2, \label{eq.normal_tr_step_proj} \ \ \text{and} \\
    &\min_{\that \in \Pi}\ \ \tfrac{1}{2} \left\| \JCfrac (J_k J_k^T)^{-1/2} (\Jtrue_k \Jtrue_k^T)^{-1/2} \ctrue_k +   \that   \right\|_{(J_k J_k^T)^{1/2} (\Jtrue_k \Jtrue_k^T) (J_k J_k^T)^{1/2}}^2. \label{eq.normal_tr_step_det_proj}
  \end{align}
  Let $(t^*, \that^*) \in \Pi \times \Pi$ be the optimal solutions of~\eqref{eq.normal_tr_step_proj} and \eqref{eq.normal_tr_step_det_proj}, respectively.  (Since their objective functions are strongly convex, these solutions are unique.)  By first-order optimality, respectively, for all $t \in \Pi$ one has
  \begin{align*}
    \langle - (J_k J_k^T)^{-1} c_k - t^*, t - t^* \rangle_{(J_k J_k^T)^2} &\leq 0 \\
    \langle \JCfrac (J_k J_k^T)^{-1/2} (\Jtrue_k \Jtrue_k^T)^{-1/2} \ctrue_k + \that^*,  \that^* - t \rangle_{(J_k J_k^T)^{1/2} \Jtrue_k \Jtrue_k^T (J_k J_k^T)^{1/2}} &\leq 0.
  \end{align*}
  Substituting $\that^*$ for $t$ in the first inequality, substituting $t^*$ for $t$ in the second inequality, and summing the results, one finds that $I \leq 0$, where
  \begin{samepage}
  \begin{align*}
    &\ I := - ((J_k J_k^T)^{-1} c_k + t^*)^T (J_k J_k^T)^2 (\that^* - t^*) \\
    &\ + (\JCfrac (J_k J_k^T)^{-1/2} (\Jtrue_k \Jtrue_k^T)^{-1/2}\ctrue_k +  \that^*)^T (J_k J_k^T)^{1/2}  \Jtrue_k \Jtrue_k^T(J_k J_k^T)^{1/2} (\that^* - t^*).
  \end{align*}
  \end{samepage}
  By expanding the inner products in $I$, one may write $I = I_1 + I_2$, where 
  \begin{align*}
    I_1 &:= - (t^*)^T (J_k J_k^T)^2   (\that^* - t^*) +  (\that^*)^T  (J_k J_k^T)^{1/2}  \Jtrue_k \Jtrue_k^T (J_k J_k^T)^{1/2} (\that^* - t^*) \\ \text{and}\ 
    I_2 &:= \JCfrac \ctrue_k^T (\Jtrue_k \Jtrue_k^T)^{1/2} (J_k J_k^T)^{1/2} (\that^* - t^*)  - c_k^T J_k J_k^T (\that^* - t^*).
  \end{align*}
  By adding and subtracting $(\that^*)^T (J_k J_k^T)^2 (\that^* - t^*)$, $I_1$ can be written as 
  \begin{align*}
  I_1
    =&\ (\that^* - t^*)^T (J_k J_k^T)^2  (\that^* - t^*) \nonumber \\
     &\ + (\that^*)^T (J_k J_k^T)^{1/2} \left( \Jtrue_k \Jtrue_k^T - J_k J_k^T \right) (J_k J_k^T)^{1/2} (\that^* - t^*) \nonumber \\ 
    =&\ \| \that^* - t^* \|_{(J_k J_k^T)^2}^2 +  (\that^*)^T (J_k J_k^T)^{1/2} \left( \Jtrue_k \Jtrue_k^T - J_k J_k^T \right) (J_k J_k^T)^{1/2} (\that^* - t^*),
  \end{align*}
  In addition, by adding and subtracting both $\ctrue_k^T (\Jtrue_k \Jtrue_k^T)^{1/2} (J_k J_k^T)^{1/2} (\that^* - t^*)$ and $c_k^T (\Jtrue_k \Jtrue_k^T)^{1/2} (J_k J_k^T)^{1/2} (\that^* - t^*)$, $I_2$ can be written as
  \begin{align*}
    &\ I_2 = (-(1 - \JCfrac) \ctrue_k^T (\Jtrue_k \Jtrue_k^T)^{1/2} \nonumber \\
    &\ + (\ctrue_k - c_k)^T (\Jtrue_k \Jtrue_k^T)^{1/2} + c_k^T ( (\Jtrue_k \Jtrue_k^T)^{1/2} - (J_k J_k^T)^{1/2} ) ) (J_k J_k^T)^{1/2} (\that^* - t^*).
  \end{align*}
  Thus, $I = I_1 + I_2 \leq 0$ along with these inequalities yields
  \begin{align*}
    &\ \| \that^* - t^* \|_{(J_k J_k^T)^2}^2 \\
    \leq&\ ( (\that^*)^T (J_k J_k^T)^{1/2} ( J_k J_k^T - \Jtrue_k \Jtrue_k^T ) + (1 - \JCfrac ) \ctrue_k^T (\Jtrue_k \Jtrue_k^T)^{1/2}  \\
        &\ + (c_k - \ctrue_k)^T (\Jtrue_k \Jtrue_k^T)^{1/2} + c_k^T ((J_k J_k^T)^{1/2} - (\Jtrue_k \Jtrue_k^T)^{1/2}) ) (J_k J_k^T)^{1/2} (\that^* - t^*).  
  \end{align*}
  Thus, by submultiplicity of the matrix 2-norm, the facts that $\|(J_k J_k^T)^{1/2} \that^*\| = \|J_k^T \that^*\|$ and $\|(\Jtrue_k \Jtrue_k^T)^{1/2} \ctrue_k\| = \|\Jtrue_k^T \ctrue_k\|$, the definition of the feasible set $\Pi$, and the fact that for any pair of vectors $(a, b) \in \RR^n \times \RR^n$ one has $| \|a\| - \|b\| | \leq \|a - b\|$, it follows that
  \begin{align*}
    &\ \|\that^* - t^*\|_{(J_k J_k^T)^2}^2 \nonumber \\
    \leq&\ ( \|J_k^T \that^*\| \| J_k J_k^T - \Jtrue_k \Jtrue_k^T \| + |1 - \JCfrac| \|\Jtrue_k^T \ctrue_k\| + \|c_k - \ctrue_k \| \|(\Jtrue_k \Jtrue_k^T)^{1/2}\| \nonumber \\
     & \ \ + \|c_k\| \|(J_k J_k^T)^{1/2} - (\Jtrue_k \Jtrue_k^T)^{1/2} \| ) \|(J_k J_k^T)^{1/2} (\that^* - t^*) \| \nonumber \\
    \leq&\ (\omega \| J_k^T c_k \| \| J_k J_k^T - \Jtrue_k \Jtrue_k^T \| + \| \Jtrue_k^T \ctrue_k - J_k^T c_k \| + \|c_k - \ctrue_k \| \|\Jtrue_k\| \nonumber \\
  & \ \ +  \|c_k\| \|(J_k J_k^T)^{1/2} - (\Jtrue_k \Jtrue_k^T)^{1/2} \| ) \|(J_k J_k^T)^{1/2} \| \| \that^* - t^* \|.
  \end{align*}
  This bound, submultiplicity of the matrix 2-norm, and Lemma~\ref{lemma.violet_orange_olive} yield
  \begin{align*}
    &\ \| \that^* - t^* \|_{(J_k J_k^T)^2}^2 \\
    \leq&\ (\omega (\|\Jtrue_k\| + \eps_J) (\|\ctrue_k \| + \eps_c) \eps_J (\eps_J + 2 \|\Jtrue_k\| ) + 2 \|\Jtrue_k\| \eps_c + (\|\ctrue_k\| + \eps_c) \eps_J\\
    &\ + (\|\ctrue_k\| + \eps_c) (\eps_J ( \eps_J + 2 \|\Jtrue_k\| ))^{1/2} ) (\|\Jtrue_k\| + \eps_J) \| \that^* - t^* \|. 
  \end{align*}
  Hence, along with $\| \that^* - t^* \|^2 \leq \|(J_k J_k^T)^{-2} \| \| \that^* - t^* \|_{(J_k J_k^T)^2}^2$ and \eqref{eq.sigma_J} one has  
   \begin{align}
     &\ \| \that^* - t^* \| \nonumber \\
     \leq&\ \sigma_J^2 (\|\Jtrue_k\| + \eps_J) (\omega (\|\Jtrue_k\| + \eps_J) (\|\ctrue_k \| + \eps_c) \eps_J ( \eps_J + 2 \|\Jtrue_k\| ) \nonumber \\
     &\ + 2 \|\Jtrue_k\| \eps_c + (\|\ctrue_k\| + \eps_c) \eps_J + (\|\ctrue_k\| + \eps_c) (\eps_J ( \eps_J + 2 \|\Jtrue_k\| ))^{1/2} ) \nonumber \\
     = :&\ \eps_{t_1,k}. \label{eq.that_ttilde_diff}
  \end{align}
  
  Now let $\tbar^* \in \Pi$ be the unique optimal solution of~\eqref{eq.normal_tr_step_cov_1} with respect to the noiseless quantities.  Our next aim is to bound $\|\tbar^* - t^* \|$.  That being said, since $\|\tbar^* - t^* \| \leq \| \tbar^* - \that^* \| + \| \that^* -  t^* \|$, where $\| \that^* -  t^* \|$ has the bound given in \eqref{eq.that_ttilde_diff}, we only need to bound $\| \that^* - \tbar^* \|$.  By \eqref{eq.sigma_J}, \eqref{eq.tbar_that}, and Lemma~\ref{lemma.violet_orange_olive},
  \begin{align*}
    &\ \| \tbar^* - \that^* \| \\
    =&\ \| ( I - \JCfrac (J_k J_k^T)^{-1/2} (\Jtrue_k \Jtrue_k^T)^{1/2} ) \tbar^* \| \\
    =&\ \| ( \JCfrac I - \JCfrac I + I - \JCfrac (J_k J_k^T)^{-1/2} (\Jtrue_k \Jtrue_k^T)^{1/2} ) \tbar^* \| \\
    \leq&\ \JCfrac ( |\tfrac{\|\Jtrue_k^T \ctrue_k\|}{\|J_k^T c_k\|} - 1 | + \|I - (J_k J_k^T)^{-1/2} (\Jtrue_k \Jtrue_k^T)^{1/2} \| ) \\
    &\ \cdot \| (\Jtrue_k \Jtrue_k^T)^{-1/2} \| \| (\Jtrue_k \Jtrue_k^T)^{1/2} \tbar^* \| \\
    \leq&\ \omega \|J_k^T c_k\| \| (\Jtrue_k \Jtrue_k^T)^{-1/2} \| \\
    &\ \cdot ( | 1 - \tfrac{\|\Jtrue_k^T \ctrue_k\|}{\|J_k^T c_k\|} | + \| (J_k J_k^T)^{-1/2} \| \| (J_k J_k^T)^{1/2} - (\Jtrue_k \Jtrue_k^T)^{1/2} \| ) \\
    =&\ \omega \| (\Jtrue_k \Jtrue_k^T)^{-1/2} \| ( |  \|J_k^T c_k\| - \|\Jtrue_k^T \ctrue_k\|| \\
    &\ +\|J_k^T c_k\|  \| (J_k J_k^T)^{-1/2} \| \| (J_k J_k^T)^{1/2} - (\Jtrue_k \Jtrue_k^T)^{1/2} \| ) \\
    \leq&\ \omega \sqrt{\sigmabar_J} ( \|\Jtrue_k\| \eps_c + (\|\ctrue_k\| + \eps_c) \eps_J \\
    &\ + ( \|\Jtrue_k^T \ctrue_k\| + \|\Jtrue_k\| \eps_c + \eps_J (\|\ctrue_k\| +  \eps_c)) \sqrt{\sigma_J} (\eps_J (\eps_J + 2 \|\Jtrue_k\| ) )^{1/2} ) =: \eps_{t_2,k}.
  \end{align*}
  Thus, with \eqref{eq.sigma_J} and \eqref{eq.that_ttilde_diff}, one has $\|\hat t^* - t^* \| \leq \eps_{t_1,k} + \eps_{t_2,k} =: \eps_{t,k}$, so
  \begin{align*}\label{eq.eps_v}
    \|\vtrue_k - v_k \|
      &= \| \Jtrue_k^T \hat t^* - J_k^T \hat t^* + J_k^T \hat t^* - J_k^T t^* \| \\
      &\leq \| \Jtrue_k - J_k \| \| (\Jtrue_k \Jtrue_k^T)^{-1/2} \| \| (\Jtrue_k \Jtrue_k^T)^{1/2} \hat t^* \| + \| J_k \| \| \hat t^* - t^* \| \\
      &\leq \eps_J \omega \sqrt{\sigmabar_J}  \| \Jtrue_k^T \ctrue_k\| + (\|\Jtrue_k\| + \eps_J) \eps_{t,k}.
  \end{align*}
  Thus, under Assumptions~\ref{assumption.err0} and \ref{assumption.s_bounded}, the fact that $\eps_{t,k} := \eps_{t_1,k} + \eps_{t_2,k}$, and the definitions of $\eps_{t_1,k}$ and $\eps_{t_2,k}$, the desired conclusions follow.
\end{proof}

We now present a few lemmas that focus on properties of the tangential steps.  Our goal in these lemmas is to prove a result similar to Lemma~\ref{lemma.ub_epskv}, except in terms of the tangential steps.  Our derivation of this result relies on expressing each tangential step as the result of an oblique projection onto the null space of a constraint Jacobian.  Hence, our first result pertains to oblique projection matrices.  The result is known, but we present it for the sake of completeness.  For a matrix $A \in \RR^{n \times n}$, the $M$-adjoint of $A$ is a matrix $A^* \in \RR^{n \times n}$ such that $\langle u, Av \rangle_{M} = \langle A^* u, v \rangle_{M}$ for all $(u, v) \in \RR^n \times \RR^n$. For a symmetric positive definite matrix $M$, the $M$-adjoint of $A$ equals $A^* = M^{-1} A^T M$. In addition, a matrix $P$ is called an $M$-orthogonal projection matrix if and only if it is idempotent ($P^2 = P$) and $M$-self-adjoint ($P = P^*$).

\begin{lemma}\label{lemma.prelim_tang_1}
  Suppose that the matrix $J \in \RR^{q \times (n+q)}$ has full row rank, the columns of the matrix $Z \in \RR^{(n+q) \times n}$ form an orthonormal basis for $\mathrm{null}(J)$, and $\Gamma \in \RR^{(n+q) \times (n+q)}$ is symmetric and positive definite.  Then,
  \begin{equation*}
    P := Z (Z^T \Gamma Z)^{-1} Z^T \Gamma = I - \Gamma^{-1} J^T (J \Gamma^{-1} J^T)^{-1} J \in \RR^{(n+q) \times (n+q)}
  \end{equation*}
  is a $\Gamma$-orthogonal projection matrix that projects $u \in \RR^{n+q}$ onto $\mathrm{null}(J)$ and
  \begin{equation*}
    Q := \Gamma^{-1} J^T (J \Gamma^{-1} J^T)^{-1} J \in \RR^{(n+q) \times (n+q)}
  \end{equation*}
  is a $\Gamma$-orthogonal projection matrix that projects $v \in \RR^{n+q}$ onto $\mathrm{range}(\Gamma^{-1}J^T)$, i.e., the space that is $\Gamma$-orthogonal to $\mathrm{null}(J)$.
\end{lemma}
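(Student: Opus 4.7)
The plan is to prove the three assertions in turn: (i) the two formulas for $P$ coincide, (ii) $P$ and $Q$ are $\Gamma$-orthogonal projections (idempotent and $\Gamma$-self-adjoint), and (iii) their ranges are as claimed, with $\mathrm{range}(\Gamma^{-1}J^T)$ being $\Gamma$-orthogonal to $\mathrm{null}(J)$. Before any of this, I would observe that $P$ and $Q$ are well-defined, since full row rank of $J$ and positive definiteness of $\Gamma$ imply that $J\Gamma^{-1}J^T$ is positive definite, and orthonormality of the columns of $Z$ combined with positive definiteness of $\Gamma$ implies that $Z^T\Gamma Z$ is positive definite.

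The central structural fact I would establish first is the $\Gamma$-orthogonal direct sum decomposition $\RR^{n+q} = \mathrm{null}(J) \oplus \mathrm{range}(\Gamma^{-1}J^T)$. The dimensions add to $n+q$ by rank-nullity using the full row rank of $J$, and $\Gamma$-orthogonality is immediate from
\begin{equation*}
\langle Zw,\, \Gamma^{-1}J^T v\rangle_\Gamma = w^T Z^T \Gamma \Gamma^{-1} J^T v = w^T(JZ)^T v = 0,
\end{equation*}
since $JZ = 0$. Consequently, every $u \in \RR^{n+q}$ admits a unique decomposition $u = Zw + \Gamma^{-1}J^T v$.

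To prove the equality of the two expressions for $P$, I would show that $P + Q = I$ by evaluating both sides on each summand of the decomposition. For $u = Zw$, the matrix $Z(Z^T\Gamma Z)^{-1}Z^T\Gamma$ simplifies to $Z(Z^T\Gamma Z)^{-1}(Z^T\Gamma Z)w = Zw$, while $Q u = \Gamma^{-1}J^T(J\Gamma^{-1}J^T)^{-1}(JZ)w = 0$. For $u = \Gamma^{-1}J^T v$, the calculation is reversed: $Z(Z^T\Gamma Z)^{-1}Z^T J^T v = 0$ (using $Z^TJ^T = (JZ)^T = 0$) and $Qu = u$. Hence $Z(Z^T\Gamma Z)^{-1}Z^T\Gamma + Q = I$ on all of $\RR^{n+q}$, which is exactly the claimed identity $P = I - \Gamma^{-1}J^T(J\Gamma^{-1}J^T)^{-1}J$.

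The projection properties then follow by routine matrix algebra. Idempotence $P^2 = P$ collapses via $(Z^T\Gamma Z)^{-1}(Z^T\Gamma Z) = I$ in the middle, and analogously for $Q^2 = Q$. For $\Gamma$-self-adjointness, it suffices to check $P^T\Gamma = \Gamma P$: both equal the symmetric matrix $\Gamma Z(Z^T\Gamma Z)^{-1}Z^T\Gamma$. Likewise $Q^T\Gamma = \Gamma Q$ both equal $J^T(J\Gamma^{-1}J^T)^{-1}J$. Finally, $\mathrm{range}(P) \subseteq \mathrm{range}(Z) = \mathrm{null}(J)$ by construction, and $Pu = u$ for $u \in \mathrm{null}(J)$ as shown above; analogously $\mathrm{range}(Q) = \mathrm{range}(\Gamma^{-1}J^T)$ with $Qu = u$ there. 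I do not anticipate any serious obstacle: the only thing that needs care is keeping the two representations of $P$ consistent, and the direct-sum decomposition gives the cleanest route. The heaviest step is verifying $P + Q = I$, which is effectively one computation on each subspace.
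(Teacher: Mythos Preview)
Your proposal is correct and follows essentially the same verification-based approach as the paper, which simply states that the equality of the two formulas and the idempotence and $\Gamma$-self-adjointness are ``straightforward to verify,'' then checks $JPu=0$ and $Qv\in\mathrm{range}(\Gamma^{-1}J^T)$. Your explicit use of the $\Gamma$-orthogonal direct-sum decomposition $\RR^{n+q}=\mathrm{null}(J)\oplus\mathrm{range}(\Gamma^{-1}J^T)$ to establish $P+Q=I$ is a clean way to fill in the equality of the two formulas that the paper leaves implicit, and the remaining checks you outline are exactly the routine computations the paper alludes to.
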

\begin{proof}
  It is straightforward to verify that the stated matrices for $P$ are equal and that both $P$ and $Q$ are idempotent and $\Gamma$-self-adjoint.  Furthermore, given any $u \in \RR^{n+q}$, one finds that $JPu = 0$, which shows that $Pu$ lies in $\mathrm{null}(J)$, and given any $v \in \RR^{n+q}$, one finds that $Qv \in \mathrm{range}(\Gamma^{-1}J^T)$, as claimed.
\end{proof}

Let us now introduce the following result, which shows that the noisy and noiseless tangential steps can be expressed in terms of solutions of linear systems involving certain positive definite matrices that, under our assumptions, have eigenvalues that are uniformly contained within a positive interval.

\begin{lemma}\label{lemma.equiv_u_lambda}
  Suppose that Assumptions~\ref{assumption.err0}, \ref{assumption.bounded}, \ref{assumption.s_bounded}, and \ref{assumption.sigma_J} hold.  Then, there exists $(\lambda,\sigma_\Gamma) \in [0,\infty) \times (0,\infty)$ such that, for all $k \in \NN$, one can express
  \begin{equation}\label{eq.equiv_u_lambda}
    \begin{aligned}
      \begin{bmatrix} W_k + \lambda J_k^T J_k & J_k^T \\ J_k & 0 \end{bmatrix} \begin{bmatrix} u_k \\ y_{k+1} \end{bmatrix} &= -\begin{bmatrix} g_k + W v_k \\ 0 \end{bmatrix} \\ \text{and}\ \ \begin{bmatrix} W_k + \lambda \Jtrue_k^T \Jtrue_k  & \Jtrue_k^T \\ \Jtrue_k & 0 \end{bmatrix} \begin{bmatrix} \utrue_k \\ \bar y_{k+1} \end{bmatrix} &= -\begin{bmatrix} \gtrue_k + W \vtrue_k \\ 0 \end{bmatrix},
    \end{aligned}
  \end{equation}
  where $\Gamma_k := W_k + \lambda J_k^TJ_k$ and $\Gammatrue_k := W_k + \lambda \Jtrue_k^T\Jtrue_k$ satisfy
  \begin{equation*}
    \min\{\sigma_{\min}(\Gamma_k),\sigma_{\min}(\Gammatrue_k)\} \geq \sigma_\Gamma.
  \end{equation*}
\end{lemma}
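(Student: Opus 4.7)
My plan is to separate the two assertions of the lemma: the algebraic reformulation \eqref{eq.equiv_u_lambda} and the uniform lower bound $\sigma_\Gamma$ on the smallest eigenvalues of $\Gamma_k$ and $\Gammatrue_k$. I would address the reformulation first, since it holds for any $\lambda \geq 0$ and is essentially a rearrangement of \eqref{eq.perturbed_newton_sys}, and then choose $\lambda$ to make the positive-definiteness work.

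For the reformulation, I would begin with \eqref{eq.perturbed_newton_sys} and substitute $d_k = v_k + u_k$. The bottom block gives $J_k u_k = 0$, and the top block rearranges to $W_k u_k + J_k^T y_{k+1} = -(g_k + W_k v_k)$. Because $J_k u_k = 0$, the vector $\lambda J_k^T J_k u_k$ is zero for every $\lambda \geq 0$, so it may be added to the top block without changing the solution; that produces the first system in \eqref{eq.equiv_u_lambda}. The same substitution applied to the noiseless analogue of \eqref{eq.perturbed_newton_sys}, which is uniquely solvable by Assumption~\ref{assumption.bounded}(a), gives the second system.

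For the positive-definiteness claim, I would pick an arbitrary $p \in \RR^{n+q}$, decompose $p = p_N + p_R$ with $p_N \in \mathrm{null}(J_k)$ and $p_R \in \mathrm{null}(J_k)^\perp = \mathrm{range}(J_k^T)$, and note $\|p\|^2 = \|p_N\|^2 + \|p_R\|^2$ together with $J_k p = J_k p_R$. Expanding yields
\[
  p^T \Gamma_k p = p_N^T W_k p_N + 2 p_N^T W_k p_R + p_R^T W_k p_R + \lambda \|J_k p_R\|^2.
\]
I would bound the four terms using Assumption~\ref{assumption.bounded}: the first by $\sigma_W \|p_N\|^2$; the cross term via Cauchy--Schwarz, $\|W_k\|\leq\kappa_W$, and Young's inequality by $-\tfrac{\sigma_W}{2}\|p_N\|^2 - \tfrac{2\kappa_W^2}{\sigma_W}\|p_R\|^2$; the third by $-\kappa_W \|p_R\|^2$; and the last by $\lambda \sigma_{\min}(J_k)^2 \|p_R\|^2$, since $J_k^T J_k$ restricted to $\mathrm{range}(J_k^T)$ has smallest eigenvalue $\sigma_{\min}(J_k)^2$.

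The hard part will be converting $\sigma_{\min}(J_k)$ into a $k$-uniform positive number, because Assumption~\ref{assumption.sigma_J} only handles $k \geq k_J$. My plan is to treat the finitely many indices $k < k_J$ separately: the algorithmic invariants (fraction-to-the-boundary rule in line~\ref{step.alpha_max} together with the slack reset in line~\ref{step.slack_reset}) keep $s_k > 0$ componentwise, so the $S_k$-block alone makes $J_k = [J_I(x_k),\,S_k]$ and $\Jtrue_k = [\Jtrue_I(x_k),\,S_k]$ of full row rank, yielding strictly positive singular values whose minimum over finitely many $k$ is strictly positive. Combining this with Assumption~\ref{assumption.sigma_J} and the observation $\sigma_{\min}(\Jtrue_k) \geq \gamma - \eps_J > 0$ for $k \geq k_J$ stated in the paragraph following that assumption, I obtain a common lower bound $\gamma_0 > 0$ valid for all $k$. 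Setting $\lambda := (\kappa_W + 2\kappa_W^2/\sigma_W + \sigma_W/2)/\gamma_0^2$ then forces the coefficient of $\|p_R\|^2$ to be at least $\sigma_W/2$, so $p^T \Gamma_k p \geq (\sigma_W/2)\|p\|^2$ and I may take $\sigma_\Gamma := \sigma_W/2$. The identical argument, invoking Assumption~\ref{assumption.bounded}(b) on $\mathrm{null}(\Jtrue_k)$, delivers the same bound for $\Gammatrue_k$.
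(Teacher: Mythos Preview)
Your proposal is correct and follows essentially the same approach as the paper. The reformulation argument matches the paper's exactly; for the positive-definiteness part the paper simply cites Section~5.4.2 of \cite{gill2019practical}, whereas you supply the explicit null/range decomposition with Young's inequality (and correctly handle the finitely many indices $k < k_J$ to obtain a uniform singular-value lower bound), which is the standard way to carry out that cited result.
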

\begin{proof}
  Consider arbitrary $k \in \NN$.  By the second block equation in \eqref{eq.perturbed_newton_sys}, one finds that $u_k \in \mathrm{null}(J_k)$ and $\utrue_k \in \mathrm{null}(\Jtrue_k)$.  Therefore, by \eqref{eq.perturbed_newton_sys}, the pairs $(u_k,y_{k+1})$ and $(\utrue_k,\ytrue_{k+1})$ satisfy \eqref{eq.equiv_u_lambda} for any $\lambda \in [0,\infty)$.  The remainder of the proof follows by known results; see, e.g., Section~5.4.2 of \cite{gill2019practical}.
\end{proof}

\begin{lemma}\label{lemma.gamma_diff}
  Suppose that Assumptions~\ref{assumption.err0}, \ref{assumption.bounded}, \ref{assumption.s_bounded}, and \ref{assumption.sigma_J} hold.  Then, for all $k \in \NN$ with $\Gamma_k$ and $\Gammatrue_k$ defined as in Lemma~\ref{lemma.equiv_u_lambda}, one has that
  \begin{equation}\label{eq.gamma_diff}
    \| \Gamma_k^{-1} - \Gammatrue_k^{-1} \| \leq \lambda \sigma_{\Gamma}^{-2} \eps_J (\eps_J + 2 \| \Jtrue_k\| ).
  \end{equation}
\end{lemma}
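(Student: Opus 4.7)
The plan is to reduce the difference of inverses to a product involving the difference of the matrices themselves via the standard resolvent-type identity, then apply submultiplicativity together with the already-established bound from Lemma~\ref{lemma.violet_orange_olive}.

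First, I would invoke the identity $\Gamma_k^{-1} - \Gammatrue_k^{-1} = \Gamma_k^{-1}(\Gammatrue_k - \Gamma_k)\Gammatrue_k^{-1}$, which is valid since both $\Gamma_k$ and $\Gammatrue_k$ are invertible by Lemma~\ref{lemma.equiv_u_lambda} (their minimum singular values are at least $\sigma_\Gamma > 0$). Next, from the definitions $\Gamma_k = W_k + \lambda J_k^T J_k$ and $\Gammatrue_k = W_k + \lambda \Jtrue_k^T \Jtrue_k$, the $W_k$ terms cancel, leaving $\Gammatrue_k - \Gamma_k = \lambda(\Jtrue_k^T \Jtrue_k - J_k^T J_k)$.

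Then I would apply submultiplicativity of the matrix 2-norm to obtain
\begin{equation*}
  \| \Gamma_k^{-1} - \Gammatrue_k^{-1} \| \leq \lambda \|\Gamma_k^{-1}\| \cdot \|\Jtrue_k^T \Jtrue_k - J_k^T J_k\| \cdot \|\Gammatrue_k^{-1}\|.
\end{equation*}
Since $\Gamma_k$ and $\Gammatrue_k$ are symmetric positive definite with minimum eigenvalue at least $\sigma_\Gamma$ (by Lemma~\ref{lemma.equiv_u_lambda}), their inverses have 2-norm at most $\sigma_\Gamma^{-1}$. Combining this with the inequality \eqref{eq.green0} from Lemma~\ref{lemma.violet_orange_olive}, namely $\|J_k^T J_k - \Jtrue_k^T \Jtrue_k\| \leq \eps_J(\eps_J + 2\|\Jtrue_k\|)$, yields the claimed bound.

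There is no real obstacle here: the argument is a three-line manipulation that simply chains together the inversion identity, submultiplicativity, and the two already-proven bounds. The only thing to be careful about is ensuring that the hypotheses of Lemma~\ref{lemma.equiv_u_lambda} (which requires Assumptions~\ref{assumption.err0}, \ref{assumption.bounded}, \ref{assumption.s_bounded}, and \ref{assumption.sigma_J}) are in force so that the uniform lower bound $\sigma_\Gamma$ on the minimum singular values of both $\Gamma_k$ and $\Gammatrue_k$ is available—but these are exactly the assumptions under which the present lemma is stated, so nothing extra needs to be verified.
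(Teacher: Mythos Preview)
Your proposal is correct and follows essentially the same approach as the paper: the resolvent identity $\Gamma_k^{-1} - \Gammatrue_k^{-1} = \Gamma_k^{-1}(\Gammatrue_k - \Gamma_k)\Gammatrue_k^{-1}$, submultiplicativity, the bound $\|\Gamma_k^{-1}\|, \|\Gammatrue_k^{-1}\| \leq \sigma_\Gamma^{-1}$ from Lemma~\ref{lemma.equiv_u_lambda}, and the bound on $\|J_k^TJ_k - \Jtrue_k^T\Jtrue_k\|$ from Lemma~\ref{lemma.violet_orange_olive}. Your invocation of \eqref{eq.green0} is in fact the precise inequality needed, since $\Gamma_k - \Gammatrue_k = \lambda(J_k^TJ_k - \Jtrue_k^T\Jtrue_k)$.
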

\begin{proof}
  Consider arbitrary $k \in \NN$.  By Lemma~\ref{lemma.equiv_u_lambda}, submultiplicity of the matrix 2-norm, and Lemma~\ref{lemma.violet_orange_olive}, it follows that
  \begin{align*}
    \| \Gamma_k^{-1} - \Gammatrue_k^{-1} \|
    &= \| \Gamma_k^{-1} ( \Gammatrue_k - \Gamma_k ) \Gammatrue_k^{-1} \| \\
    &\leq \| \Gamma_k^{-1} \| \| \Gammatrue_k^{-1} \| \| \Gamma_k - \Gammatrue_k \| \\
    &\leq \lambda \sigma_{\Gamma}^{-2} \|J_k J_k^T - \Jtrue_k \Jtrue_k^T\| \leq \lambda \sigma_{\Gamma}^{-2} \eps_J (\eps_J + 2 \| \Jtrue_k\| ),
  \end{align*}
  which gives the desired conclusion.
\end{proof}

We are now prepared to prove a uniform bound on the norm of the differences between the noisy and noiseless tangential steps.  The bound that we prove, stated in Lemma~\ref{lemma.ub_epsku} below, requires an additional assumption pertaining to the noise in the constraint Jacobians, which we state next.  That the assumption pertains to the noise in the constraint Jacobians specifically can be seen by observing the manner in which the matrices $\{\Gamma_k\}$ and $\{\Gammatrue_k\}$ are defined through Lemma~\ref{lemma.equiv_u_lambda} (i.e., with the same matrix $W_k$ for both the noisy and noiseless linear systems in \eqref{eq.equiv_u_lambda}).  Whereas Assumption~\ref{assumption.sigma_J} requires that, for large $k$, the noise in each constraint Jacobian estimate is small relative to singular values, the following assumption essentially requires that the noise in the constraint Jacobian estimates is small enough such that the null space defined by the noisy and noiseless values does not differ too much.

\begin{assumption}\label{ass.noise_level}
  \textit{
  Let $k_J \in \NN$ be defined as in Assumption~\ref{assumption.sigma_J}, and for all $k \in \NN$ let $\Gamma_k$ and $\Gammatrue_k$ be defined as in Lemma~\ref{lemma.equiv_u_lambda}.  Then, for all $k \in \NN$ with $k \geq k_J$,
  \begin{multline*}
    \|(\Jtrue_k \Gammatrue_k^{-1} \Jtrue_k)^{-1}\| (\|\Jtrue_k\| \|\Gamma_k^{-1} J_k^T - \Gammatrue_k^{-1} \Jtrue_k^T\| + \| \Gammatrue_k^{-1} \Jtrue_k^T \| \epsilon_J + \| \Gamma_k^{-1} J_k^T - \Gammatrue_k^{-1} \Jtrue_k^T \| \epsilon_J) < 1.
  \end{multline*}
  }
\end{assumption}

Observe that if $\eps_J = 0$, then $J_k = \Jtrue_k$ and $\Gamma_k = \Gammatrue_k$, in which case the expression on the left-hand side of the inequality in Assumption~\ref{ass.noise_level} is 0, so the assumption holds.  It follows that the assumption also holds for $\eps_J$ sufficiently small relative to the magnitudes of the elements of $\Jtrue_k$ and $W_k$ for all $k \in \NN$.

\begin{lemma}\label{lemma.ub_epsku}
  Suppose that Assumptions~\ref{assumption.err0}, \ref{assumption.bounded}, \ref{assumption.s_bounded}, \ref{assumption.sigma_J}, and \ref{ass.noise_level} hold.  Then, there exists $\eps_u \in [0,\infty)$ such that for all $k \in \NN$ with $k \geq k_J$ one has
  \begin{equation}\label{eq.ub_epsku}
    \|u_k - \utrue_k \| \leq \eps_u.
  \end{equation}
  In addition, \eqref{eq.ub_epsku} holds with $\eps_u \to 0$ as $(\eps_c,\eps_J,\eps_g) \searrow (0,0,0)$.
\end{lemma}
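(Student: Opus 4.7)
My plan is to obtain closed-form expressions for $u_k$ and $\utrue_k$ via oblique projections and then bound $\|u_k - \utrue_k\|$ by a telescoping argument.

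\textbf{Step 1 (closed forms).} By Lemma~\ref{lemma.equiv_u_lambda}, $(u_k,y_{k+1})$ is the unique solution of a saddle-point system with symmetric positive definite leading block $\Gamma_k$. Eliminating $y_{k+1}$ in the manner of Lemma~\ref{lemma.prelim_tang_1} yields
\begin{equation*}
u_k = -\bigl(I - \Gamma_k^{-1} J_k^T (J_k \Gamma_k^{-1} J_k^T)^{-1} J_k\bigr) \Gamma_k^{-1}(g_k + W_k v_k),
\end{equation*}
and analogously $\utrue_k = -\bigl(I - \Gammatrue_k^{-1} \Jtrue_k^T (\Jtrue_k \Gammatrue_k^{-1} \Jtrue_k^T)^{-1} \Jtrue_k\bigr) \Gammatrue_k^{-1}(\gtrue_k + W_k \vtrue_k)$.

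\textbf{Step 2 (uniform bound on $(J_k \Gamma_k^{-1} J_k^T)^{-1}$).} By adding and subtracting terms,
\begin{equation*}
J_k \Gamma_k^{-1} J_k^T - \Jtrue_k \Gammatrue_k^{-1} \Jtrue_k^T = (J_k - \Jtrue_k) \Gamma_k^{-1} J_k^T + \Jtrue_k \bigl(\Gamma_k^{-1} J_k^T - \Gammatrue_k^{-1} \Jtrue_k^T\bigr),
\end{equation*}
whose norm is of order $O(\eps_J)$ by Lemma~\ref{lemma.gamma_diff} and the uniform bounds on $\|J_k\|$, $\|\Jtrue_k\|$, $\|\Gamma_k^{-1}\|$ from Assumptions~\ref{assumption.err0}, \ref{assumption.bounded}, and~\ref{assumption.sigma_J}. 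The inequality imposed in Assumption~\ref{ass.noise_level} is precisely what is required so that $\|(\Jtrue_k \Gammatrue_k^{-1} \Jtrue_k^T)^{-1} (J_k \Gamma_k^{-1} J_k^T - \Jtrue_k \Gammatrue_k^{-1} \Jtrue_k^T)\| < 1$. A Neumann-series argument then yields existence of $(J_k \Gamma_k^{-1} J_k^T)^{-1}$ with norm bounded uniformly in $k \geq k_J$, together with a perturbation estimate $\|(J_k \Gamma_k^{-1} J_k^T)^{-1} - (\Jtrue_k \Gammatrue_k^{-1} \Jtrue_k^T)^{-1}\| = O(\eps_J)$.

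\textbf{Step 3 (telescoping).} I would write $u_k - \utrue_k$ as a sum of differences in which one of the factors $\Gamma_k^{-1}$, $J_k^T$, $(J_k \Gamma_k^{-1} J_k^T)^{-1}$, $J_k$, $g_k$, and $v_k$ is replaced by its noiseless counterpart at a time (note $W_k$ is identical in both formulas). Each resulting term factors as a product of bounded quantities (using Assumption~\ref{assumption.err0}, Assumption~\ref{assumption.s_bounded}, the norm bound $\|W_k\| \leq \kappa_W$, the trust-region bound $\|v_k\| \leq \omega \|J_k^T c_k\|$, and Step~2) times exactly one error factor among $\|g_k - \gtrue_k\| \leq \eps_g$, $\|J_k - \Jtrue_k\| \leq \eps_J$, $\|v_k - \vtrue_k\| \leq \eps_v$ (Lemma~\ref{lemma.ub_epskv}), $\|\Gamma_k^{-1} - \Gammatrue_k^{-1}\| = O(\eps_J)$ (Lemma~\ref{lemma.gamma_diff}), or the Step~2 bound. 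Summing these bounds yields a single constant $\eps_u \in [0,\infty)$ for which \eqref{eq.ub_epsku} holds; since every error factor vanishes in the limit $(\eps_c,\eps_J,\eps_g) \searrow (0,0,0)$ (including $\eps_v \to 0$ by Lemma~\ref{lemma.ub_epskv}), one obtains $\eps_u \to 0$ as claimed.

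\textbf{Main obstacle.} The principal difficulty lies in Step~2: a noise-independent upper bound on $\|(J_k \Gamma_k^{-1} J_k^T)^{-1}\|$ is required, without which the oblique projection defining the tangential step could be arbitrarily ill-conditioned and no uniform perturbation estimate would be possible. Assumption~\ref{ass.noise_level} is tailored precisely to supply the Neumann-series condition driving this step. The telescoping estimates of Step~3, while lengthy, are a routine application of submultiplicativity of the matrix $2$-norm together with previously established bounds.
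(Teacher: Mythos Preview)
Your proposal is correct and follows essentially the same route as the paper: express $u_k$ and $\utrue_k$ as oblique projections of $\Gamma_k^{-1}(g_k+W_kv_k)$ and $\Gammatrue_k^{-1}(\gtrue_k+W_k\vtrue_k)$, then bound the difference via a projection-perturbation estimate controlled by Assumption~\ref{ass.noise_level}. The only methodological difference is that where you carry out the Neumann-series argument and factor-by-factor telescoping by hand, the paper instead invokes Stewart's perturbation theorem for oblique projectors (Theorem~3.4 in \cite{stewart2011numerical}) to bound $\|P_k-\Ptrue_k\|=\|Q_k-\Qbar_k\|$ directly, and then splits $\|u_k-\utrue_k\|$ into just two pieces, $\|P_k-\Ptrue_k\|\,\|\Gamma_k^{-1}(g_k+W_kv_k)\|$ and $\|\Ptrue_k\|\,\|\Gamma_k^{-1}(g_k+W_kv_k)-\Gammatrue_k^{-1}(\gtrue_k+W_k\vtrue_k)\|$. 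Your reading of Assumption~\ref{ass.noise_level} as exactly the Neumann condition is accurate: the three terms in its parentheses are precisely a submultiplicative upper bound on $\|J_k\Gamma_k^{-1}J_k^T-\Jtrue_k\Gammatrue_k^{-1}\Jtrue_k^T\|$ obtained from your Step~2 decomposition together with $\|\Gamma_k^{-1}J_k^T\|\le\|\Gammatrue_k^{-1}\Jtrue_k^T\|+\|\Gamma_k^{-1}J_k^T-\Gammatrue_k^{-1}\Jtrue_k^T\|$.
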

\begin{proof}
  Consider arbitrary $k \in \NN$ with $k \geq k_J$.  Let $Z_k$ and $\Ztrue_k$ have columns that form orthonormal bases for $\mathrm{null}(J_k)$ and $\mathrm{null}(\Jtrue_k)$, respectively. From~\eqref{eq.equiv_u_lambda},
  \begin{align*}
    u_k &= - Z_k (Z_k^T \Gamma_k Z_k )^{-1} Z_k^T \Gamma_k (\Gamma_k^{-1} g_k + \Gamma_k^{-1} W_k v_k), \\ \text{and}\ \ 
    \utrue_k &= - \Ztrue_k (\Ztrue_k^T \Gammatrue_k \Ztrue_k )^{-1} \Ztrue_k^T \Gammatrue_k (\Gammatrue_k^{-1}\gtrue_k + \Gammatrue_k^{-1} W_k \vtrue_k),
  \end{align*}
  where $\Gamma_k$ and $\Gammatrue_k$ are defined as in Lemma~\ref{lemma.equiv_u_lambda}.  By Lemma~\ref{lemma.prelim_tang_1}, the matrices
  \begin{equation*}
    P_k := Z_k (Z_k^T \Gamma_k Z_k )^{-1} Z_k^T \Gamma_k\ \ \text{and}\ \ \Ptrue_k := \Ztrue_k (\Ztrue_k^T \Gammatrue_k \Ztrue_k)^{-1} \Ztrue_k^T \Gammatrue_k
  \end{equation*}
  are a $\Gamma_k$-orthogonal projection matrix on $\mathrm{null}(J_k)$ and a $\Gammatrue_k$-orthogonal projection matrix on $\mathrm{null}(\Jtrue_k)$, respectively.  One finds that
  \begin{align}
    \| u_k - \utrue_k \|
      =&\ \|P_k (\Gamma_k^{-1} g_k + \Gamma_k^{-1} W_k v_k) - \Ptrue_k (\Gamma_k^{-1} g_k + \Gamma_k^{-1} W_k v_k) \nonumber \\
      &\ + \Ptrue_k ( \Gamma_k^{-1}g_k+  \Gamma_k^{-1} W_k v_k) - \Ptrue_k (\Gammatrue_k^{-1} \gtrue_k + \Gammatrue_k^{-1} W_k \vtrue_k)\| \nonumber \\
      \leq&\ \|P_k - \Ptrue_k \| \| \Gamma_k^{-1} g_k +  \Gamma_k^{-1} W_k v_k \| \label{eq.u_diff0} \\
      &\ + \| \Ptrue_k \| \|\Gamma_k^{-1}(g_k+   W_k v_k) -  \Gammatrue_k^{-1} (\gtrue_k +  W_k \vtrue_k) \|. \nonumber
  \end{align}
  Our aim now is to establish upper bounds for the terms on the right-hand side of \eqref{eq.u_diff0}.  To bound $\|P_k - \Ptrue_k \| $, we define $Q_k = \Gamma_k^{-1} J_k^T (J \Gamma_k^{-1}J_k^T)^{-1} J_k $ and $\Qbar_k = \Gammatrue_k^{-1} \Jtrue_k^T (\Jtrue_k \Gammatrue_k^{-1}\Jtrue_k^T)^{-1} \Jtrue_k$.  By Lemma~\ref{lemma.prelim_tang_1}, Theorem~3.4 in \cite{stewart2011numerical} regarding the properties of the oblique projection matrices (under Assumption~\ref{ass.noise_level}), one finds
  \begin{align}
          &\ \|P_k - \Ptrue_k \| = \| Q_k -  \Qbar_k \| \nonumber \\
      \leq&\ \|\Qbar_k \|(1 + \|\Qbar_k \|) \min \{ \kappa(\Gammatrue_k^{-1} \Jtrue_k^T),  \kappa ((\Jtrue_k \Gammatrue_k^{-1}\Jtrue_k^T)^{-1} \Jtrue_k) \} \frac{\|\Gamma_k^{-1} J_k^T - \Gammatrue_k^{-1} \Jtrue_k^T\|}{\|\Gammatrue_k^{-1} \Jtrue_k^T\|} \nonumber \\
          &\ +  \|\Qbar_k \|(1 + \|\Qbar_k \|)  \min \{\kappa(\Jtrue_k^T), \kappa (\Gammatrue_k^{-1} \Jtrue_k^T(\Jtrue_k \Gammatrue_k^{-1}\Jtrue_k^T)^{-1} ) \} \frac{\eps_J}{\|\Jtrue_k^T\|} \nonumber \\
          &\ + O( \|\Qbar_k \| \max\{\|\Gamma_k^{-1} J_k^T - \Gammatrue_k^{-1} \Jtrue_k^T\|, \eps_J \}^2). \label{eq.Delta_P}
  \end{align}
  By Lemmas~\ref{lemma.violet_orange_olive}, \ref{lemma.equiv_u_lambda}, and  \ref{lemma.gamma_diff} along with Assumption~\ref{assumption.err0}, one has
  \begin{align*}
    \|\Gamma_k^{-1} J_k^T - \Gammatrue_k^{-1} \Jtrue_k^T\| 
    &=  \|\Gamma_k^{-1} J_k^T - \Gamma_k^{-1} \Jtrue_k^T + \Gamma_k^{-1} \Jtrue_k^T - \Gammatrue_k^{-1} \Jtrue_k^T\| \\
    &\leq \|\Gamma_k^{-1}\| \eps_J + \|\Gamma_k^{-1} - \Gammatrue_k^{-1}\| \|\Jtrue_k\|  \\
    &\leq \sigma_{\Gamma}^{-1} (\eps_J + \lambda \sigma_{\Gamma}^{-1} \eps_J (\eps_J + 2 \|\Jtrue_k\|) \|\Jtrue_k\| ).
  \end{align*}
  Similarly, under the assumptions of the lemma, one can establish uniform upper bounds on the remaining terms on the right-hand side of \eqref{eq.Delta_P} in terms of $\epsilon_J$, $\lambda$, $\sigma_\Gamma$, $J_{I,\sup}$, and $s_{\sup}$ in such a manner that they all vanish as $\epsilon_J \searrow 0$.
  
  Let us now turn to the next term on the right-hand side in \eqref{eq.u_diff0}.  By Lemma~\ref{lemma.equiv_u_lambda}, \eqref{eq.normal_tr_step}, and the assumptions of the lemma, one finds that
  \begin{equation*}
    \| \Gamma_k^{-1} g_k +  \Gamma_k^{-1} W_k v_k \|\leq \sigma_{\Gamma}^{-1} (\|\gtrue_k\| + \eps_g + \kappa_W \omega (\|\Jtrue_k \| + \eps_J) (\|\ctrue_k\| + \eps_c) ).
  \end{equation*}
  Consequently, one has a uniform upper bound on this expression in terms of the error bounds $(\epsilon_c,\epsilon_J,\epsilon_g)$ that vanishes as $(\epsilon_c,\epsilon_J,\epsilon_g) \searrow (0,0,0)$.
  
  Next, with respect to $\|\Ptrue_k\|$, one finds that
  \begin{equation*}
    \|\Ptrue_k\| = \|I - \Gammatrue_k^{-1} \Jtrue_k^T (\Jtrue_k \Gammatrue_k^{-1}\Jtrue_k^T)^{-1} \Jtrue_k\| \leq 1 + \| \Gammatrue_k^{-1} \Jtrue_k^T  \| \|(\Jtrue_k \Gammatrue_k^{-1}\Jtrue_k^T)^{-1} \Jtrue_k\|.
  \end{equation*}
  Following a similar approach as for $\|P_k - \Ptrue_k\|$, one can derive a uniform upper bound for this expression that vanishes as $\epsilon_J \searrow 0$.
  
  Finally, with respect to the last term on the right-hand side in \eqref{eq.u_diff0}, one has along with \eqref{eq.normal_tr_step} (i.e., $\|v_k\| \leq \omega \|J_k^T c_k\|$), Lemma~\ref{lemma.equiv_u_lambda}, and Lemma~\ref{lemma.gamma_diff} that
  \begin{align*}
     &\ \|\Gamma_k^{-1}(g_k + W_k v_k) - \Gammatrue_k^{-1} (\gtrue_k +  W_k \vtrue_k) \|\\
    =&\ \|\Gamma_k^{-1}(g_k + W_k v_k) - \Gammatrue_k^{-1} (g_k + W_k v_k) \\
    &\ \ + \Gammatrue_k^{-1}(g_k + W_k v_k) - \Gammatrue_k^{-1} (\gtrue_k +  W_k \vtrue_k) \| \\
    \leq&\ \|\Gamma_k^{-1}  - \Gammatrue_k^{-1} \| \|g_k+   W_k v_k\| + \|\Gammatrue_k^{-1} \| \|(g_k - \gtrue_k) +   W_k (v_k - \vtrue_k)\| \\
    \leq&\ \sigma_{\Gamma}^{-1} ( \lambda \sigma_{\Gamma}^{-1} \eps_J (\eps_J + 2 \|\Jtrue_k\|)  ( \|\gtrue_k\| + \eps_g + \kappa_W \omega (\|\Jtrue_k\| + \eps_J) (\|\ctrue_k\| + \eps_c) ) \\
    &\ \ \ \ \ \ \ + \eps_g + \kappa_W \eps_v).
  \end{align*}
  Consequently, under the assumptions of the lemma, one can establish a uniform upper bound for this term in terms of $\epsilon_J$, $\lambda$, $\sigma_J$, $J_{I,\sup}$, $s_{\sup}$, $g_{0,\sup}$, $\mu$, $\epsilon_g$, $\kappa_W$, $\omega$, $c_{I,\sup}$, $\eps_c$, and $\eps_v$ in such a manner that the upper bound vanishes as $(\epsilon_c,\epsilon_J,\epsilon_g) \searrow (0,0,0)$.  Combined with prior results, the proof is complete.
\end{proof}

The following corollary now follows with $d_k = u_k + v_k$ and $\dtrue_k = \utrue_k + \vtrue_k$.

\begin{corollary}\label{lemma.ub_epskd}
  Suppose that Assumptions~\ref{assumption.err0}, \ref{assumption.bounded}, \ref{assumption.s_bounded}, \ref{assumption.sigma_J}, and \ref{ass.noise_level} hold.  Then, there exists $\eps_d \in [0,\infty)$ such that for all $k \in \NN$ with $k \geq k_J$ one has
  \begin{equation}\label{eq.ub_epskd}
    \|d_k - \dtrue_k \| \leq \eps_d.
  \end{equation}
  In addition, \eqref{eq.ub_epskd} holds with $\eps_d \to 0$ as $(\eps_c,\eps_J,\eps_g) \searrow (0,0,0)$.
\end{corollary}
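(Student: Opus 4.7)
The plan is very short because the corollary is essentially an additive combination of Lemmas~\ref{lemma.ub_epskv} and~\ref{lemma.ub_epsku}. First I would recall the decomposition $d_k = v_k + u_k$ and $\dtrue_k = \vtrue_k + \utrue_k$, which follows by definition of the tangential step as $u_k := d_k - v_k$ (and the analogous definition for $\utrue_k$). This identity is available for all $k \in \NN$ under the hypotheses, since Assumption~\ref{assumption.bounded} guarantees that both systems in \eqref{eq.equiv_u_lambda} (and hence \eqref{eq.perturbed_newton_sys} for both noisy and noiseless data) have unique solutions.

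Next, for $k \geq k_J$, I would apply the triangle inequality
\begin{equation*}
  \|d_k - \dtrue_k\| = \|(v_k - \vtrue_k) + (u_k - \utrue_k)\| \leq \|v_k - \vtrue_k\| + \|u_k - \utrue_k\|,
\end{equation*}
and invoke Lemma~\ref{lemma.ub_epskv} (which requires Assumptions~\ref{assumption.err0}, \ref{assumption.bounded}, \ref{assumption.s_bounded}, and \ref{assumption.sigma_J}) to bound the first term by $\eps_v$, together with Lemma~\ref{lemma.ub_epsku} (which additionally uses Assumption~\ref{ass.noise_level}) to bound the second term by $\eps_u$. Setting $\eps_d := \eps_v + \eps_u$ then yields \eqref{eq.ub_epskd}.

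For the concluding asymptotic claim, I would observe that Lemma~\ref{lemma.ub_epskv} gives $\eps_v \to 0$ as $(\eps_c,\eps_J) \searrow (0,0)$, and Lemma~\ref{lemma.ub_epsku} gives $\eps_u \to 0$ as $(\eps_c,\eps_J,\eps_g) \searrow (0,0,0)$. Since $\eps_d = \eps_v + \eps_u$, the sum vanishes in the same regime $(\eps_c,\eps_J,\eps_g) \searrow (0,0,0)$, completing the argument. There is no real obstacle here beyond citing the two preceding results correctly and noting that the hypotheses of the corollary are exactly the union of the hypotheses needed for those two lemmas.
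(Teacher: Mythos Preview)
Your proposal is correct and follows essentially the same approach as the paper's own proof, which simply cites Lemmas~\ref{lemma.ub_epskv} and~\ref{lemma.ub_epsku} and sets $\eps_d := \eps_v + \eps_u$.
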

\begin{proof}
  The proof follows from Lemmas~\ref{lemma.ub_epskv} and \ref{lemma.ub_epsku} with $\eps_d := \eps_v + \eps_u$.
\end{proof}

Observe that we have shown upper bounds for the quantities in \eqref{eq.errors_k}, as desired.  For the next phase of our analysis, we focus on properties of the computed search directions, where in particular we are interested in showing that they yield positive reductions in the model of the merit function, which in turn are reductions that (generally) vanish as the algorithm progresses.  As a brief aside, we present the following lemma, which shows that in terms of noiseless quantities the model reduction is a valid stationarity measure.  This lemma provides further detail to our claim in \eqref{eq.aim} that reducing the noiseless model reduction corresponds to improvement toward stationarity of the barrier subproblem (namely, \eqref{eq.barrier}) that the algorithm aims to solve.

\begin{lemma}\label{lemma.delta_m_stat_meas}
  Suppose that Assumptions~\ref{assumption.err0} and \ref{assumption.bounded} hold.  Then, for all $k \in \NN$ such that $\tautrue_{k-1} > 0$, one has that either
  \begin{enumerate}
    \item[(a)] $\Jtrue_k^T\ctrue_k = 0$ and $\ctrue_I(x_k) \not\leq 0$;
    \item[(b)] $\Jtrue_k^T\ctrue_k = 0$, $\ctrue_I(x_k) \leq 0$, and $\gtrue_k + \Jtrue_k^T\ytrue_{k+1} = 0$; or
    \item[(c)] $\tautrue_k > 0$ and $\Delta \mtrue_k(\dtrue_k, \tautrue_k) \geq \tfrac12 \tautrue_k \utrue_k^T W_k \utrue_k + \sigma (\|\ctrue_k\| - \|\ctrue_k + \Jtrue_k \vtrue_k\|) > 0$.
  \end{enumerate}
  Thus, if $x_k$ is not an infeasible stationary point, then $\Delta \mtrue_k(\dtrue_k, \tautrue_k) = 0$ if and only if $(x_k,y_{k+1})$ satisfies the first-order conditions for \eqref{eq.barrier}, i.e., \eqref{eq.first_order_opt}.
\end{lemma}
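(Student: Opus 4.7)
The plan is to mirror the construction of the merit parameter update in the algorithm, but applied to the noiseless quantities, and then show that the resulting lower bound is strictly positive whenever neither (a) nor (b) holds. The starting point is the noiseless analog of the key inequality \eqref{eq.model_reduction_cond}: using the fact that $(\vtrue_k,\utrue_k,\dtrue_k,\ytrue_{k+1})$ are generated by the same construction as $(v_k,u_k,d_k,y_{k+1})$ but with true values substituted, and following the (forthcoming) argument of Lemma~\ref{lemma.well_defined} applied to true quantities, the update rule \eqref{eq.tau} in terms of true quantities produces $\tautrue_k \in (0,\tautrue_{k-1}]$ with
\begin{equation*}
  \Delta \mtrue_k(\dtrue_k,\tautrue_k) \geq \tfrac12 \tautrue_k \utrue_k^T W_k \utrue_k + \sigma (\|\ctrue_k\| - \|\ctrue_k + \Jtrue_k \vtrue_k\|),
\end{equation*}
provided $\tautrue_k > 0$. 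The right-hand side is always nonnegative because $\utrue_k \in \mathrm{null}(\Jtrue_k)$ and Assumption~\ref{assumption.bounded}(b) gives $\utrue_k^T W_k \utrue_k \geq \sigma_W \|\utrue_k\|^2 \geq 0$, while the Cauchy decrease condition \eqref{eq.cauchy_dec} (in noiseless form) gives $\|\ctrue_k\| \geq \|\ctrue_k + \Jtrue_k \vtrue_k\|$.

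The next step is to split cases on whether $\Jtrue_k^T \ctrue_k = 0$. If $\Jtrue_k^T \ctrue_k = 0$, then the noiseless trust-region subproblem forces $\vtrue_k = 0$, and if additionally $\ctrue_I(x_k)\not\leq 0$ this is case (a). Otherwise $\ctrue_I(x_k)\leq 0$; in this subcase either $\utrue_k = 0$, in which case $\dtrue_k = 0$ and the first block of the noiseless analog of \eqref{eq.perturbed_newton_sys} yields $\gtrue_k + \Jtrue_k^T \ytrue_{k+1} = 0$, which is case (b), or $\utrue_k \neq 0$. In the latter sub-subcase, one checks that the noiseless trial value $\tautrue_k^{\trial}$ equals $\infty$: using $\vtrue_k=0$ and $\Jtrue_k \utrue_k = 0$, the Newton system gives $\gtrue_k^T \utrue_k = -\utrue_k^T W_k \utrue_k$, hence $\gtrue_k^T \dtrue_k + \tfrac12 \utrue_k^T W_k \utrue_k = -\tfrac12 \utrue_k^T W_k \utrue_k \leq 0$. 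Thus $\tautrue_k = \tautrue_{k-1} > 0$ and the right-hand side above equals $\tfrac12 \tautrue_k \utrue_k^T W_k \utrue_k > 0$ by Assumption~\ref{assumption.bounded}(b).

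If instead $\Jtrue_k^T \ctrue_k \neq 0$, then Lemma~\ref{lemma.prelim_cauchy_det} gives $\|\ctrue_k\| - \|\ctrue_k + \Jtrue_k \vtrue_k\| > 0$. The noiseless trial value $\tautrue_k^{\trial}$ is then either $+\infty$ or a strictly positive ratio, so the update rule \eqref{eq.tau} yields $\tautrue_k \geq \min\{(1-\delta_\tau)\tautrue_{k-1},\tautrue_k^{\trial}\} > 0$. The lower bound on $\Delta \mtrue_k(\dtrue_k,\tautrue_k)$ therefore applies and is strictly positive through the $\sigma$-term, delivering case (c). Collecting the cases establishes the trichotomy.

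For the final equivalence, note that if $x_k$ is not an infeasible stationary point, then case (a) is ruled out. If $\Delta \mtrue_k(\dtrue_k,\tautrue_k)=0$, the strict positivity just shown forces case (b), so $\Jtrue_k^T \ctrue_k = 0$ (i.e.\ \eqref{eq.feas_problem_opt_z}) together with $\gtrue_k + \Jtrue_k^T \ytrue_{k+1} = 0$; unwinding the scaling in the definitions of $\gtrue$ and $\Jtrue$ recovers \eqref{eq.first_order_opt}. Conversely, if \eqref{eq.first_order_opt} holds at $(x_k,y_{k+1})$, then $\Jtrue_k^T \ctrue_k = 0$ and the unique solvability of the noiseless \eqref{eq.perturbed_newton_sys} (Assumption~\ref{assumption.bounded}(a)) forces $\dtrue_k = 0$, whence $\Delta \mtrue_k(\dtrue_k,\tautrue_k)=0$. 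The main obstacle I anticipate is the careful bookkeeping in the sub-subcase $\Jtrue_k^T\ctrue_k = 0$ with $\utrue_k \neq 0$, where one must show $\tautrue_k$ remains strictly positive despite the vanishing of the infeasibility-reduction term; handling this requires exploiting the specific structure of the Newton system to drive $\tautrue_k^{\trial}$ to $+\infty$.
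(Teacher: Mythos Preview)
Your proposal is correct and follows essentially the same approach as the paper: both arguments split on whether $\Jtrue_k^T\ctrue_k$ vanishes, use the Cauchy decrease property to get strict positivity of the infeasibility-reduction term when it does not, and in the remaining case exploit the Newton system identity $\gtrue_k^T\utrue_k = -\utrue_k^T W_k \utrue_k$ (with $\vtrue_k=0$) to force $\tautrue_k^{\trial}=\infty$ and hence $\tautrue_k=\tautrue_{k-1}>0$. Your treatment of the final equivalence is in fact more explicit than the paper's, which simply asserts it; just be sure when ``unwinding the scaling'' to note that $s_k>0$ so that $S_k\ctrue_k=0$ actually gives $\ctrue_k=0$.
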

\begin{proof}
  Consider arbitrary $k \in \NN$ such that $\tautrue_{k-1} > 0$.  If (a) or (b) holds, then there is nothing left to prove.  Thus, one may proceed under the assumption that $\Jtrue_k^T\ctrue_k \neq 0$ and/or $\gtrue_k + \Jtrue_k^T\ytrue_{k+1} \neq 0$.  If $\Jtrue_k^T\ctrue_k \neq 0$, then it follows from well-known properites of \eqref{eq.normal_tr_step} (e.g., see \cite{nocedal1999numerical}) that $\vtrue_k \neq 0$ and $\|\ctrue_k\| - \|\ctrue_k + \Jtrue_k\vtrue_k\| > 0$, which in turn shows by \eqref{eq.delta_m_tilde}, \eqref{eq.model_reduction_cond}, and \eqref{eq.pi_trial} that (c) holds.  On the other hand, if $\Jtrue_k^T\ctrue_k = 0$, but $\gtrue_k + \Jtrue_k^T\ytrue_{k+1} \neq 0$, then from \eqref{eq.normal_tr_step} and \eqref{eq.perturbed_newton_sys} one has $\vtrue_k = 0$ and
  \begin{equation*}
    W_k \utrue_k + \Jtrue_k^T \ytrue_{k+1} = -\gtrue_k \implies \utrue_k \neq 0\ \text{and}\ 0 < \utrue_k^TW_k\utrue_k = -\gtrue_k^T\utrue_k.
  \end{equation*}
  Thus, $\tautrue_k \gets \tautrue_{k-1} > 0$ and from \eqref{eq.delta_m_tilde}--\eqref{eq.model_reduction_cond} the conditions of part (c) hold.
\end{proof} 

Following a similar line of argument as in the proof of Lemma~\ref{lemma.delta_m_stat_meas}, but with respect to the noisy quantities, the following result can be established due to our previous assumption that the algorithm does not terminate finitely.

\begin{lemma}\label{lemma.well_defined}
  Suppose that Assumptions~\ref{assumption.err0} and \ref{assumption.bounded} hold.  Then, for all $k \in \NN$, one has that $\tau_k > 0$ and $(d_k,\tau_k)$ together satisfy~\eqref{eq.model_reduction_cond}.
\end{lemma}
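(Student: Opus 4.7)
The plan is to proceed by induction on $k$, establishing positivity of $\tau_k$ first, and then verifying the model reduction condition~\eqref{eq.model_reduction_cond} directly from the definitions~\eqref{eq.pi_trial}--\eqref{eq.tau} of the trial merit parameter and its update rule. The base case $\tau_{-1} > 0$ holds by the algorithm's input. For the inductive step, I would assume $\tau_{k-1} > 0$ and split into two cases according to the sign of $g_k^T d_k + \tfrac12 u_k^T W_k u_k$ appearing in~\eqref{eq.pi_trial}.

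In the easy case, when $g_k^T d_k + \tfrac12 u_k^T W_k u_k \leq 0$, formula~\eqref{eq.pi_trial} gives $\tau_k^{\mathrm{trial}} = \infty$, so the update rule~\eqref{eq.tau} sets $\tau_k = \tau_{k-1} > 0$. The main obstacle is the case when $g_k^T d_k + \tfrac12 u_k^T W_k u_k > 0$, where I need $\|c_k\| - \|c_k + J_k v_k\| > 0$ to conclude $\tau_k^{\mathrm{trial}} > 0$ (from which $\tau_k \geq \min\{(1 - \delta_\tau)\tau_{k-1}, \tau_k^{\mathrm{trial}}\} > 0$ follows). The key observation is that in this case one must have $J_k^T c_k \neq 0$: indeed, if $J_k^T c_k = 0$, then the trust region in~\eqref{eq.normal_tr_step} forces $v_k = 0$, so $d_k = u_k$, and the second block of~\eqref{eq.perturbed_newton_sys} gives $u_k \in \mathrm{null}(J_k)$; premultiplying the first block of~\eqref{eq.perturbed_newton_sys} by $u_k^T$ then yields $g_k^T d_k = -u_k^T W_k u_k$, hence $g_k^T d_k + \tfrac12 u_k^T W_k u_k = -\tfrac12 u_k^T W_k u_k \leq 0$ by Assumption~\ref{assumption.bounded}(b), a contradiction. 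Since $J_k^T c_k \neq 0$, the Cauchy decrease property~\eqref{eq.cauchy_dec} (or the noisy analogue of Lemma~\ref{lemma.prelim_cauchy_det}) gives $\|c_k\| - \|c_k + J_k v_k\| > 0$, as required.

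To verify~\eqref{eq.model_reduction_cond}, I would use the identity~\eqref{eq.delta_m_tilde} to rewrite the condition as
\begin{equation*}
  (1 - \sigma)(\|c_k\| - \|c_k + J_k v_k\|) \geq \tau_k \bigl( g_k^T d_k + \tfrac12 u_k^T W_k u_k \bigr).
\end{equation*}
When $g_k^T d_k + \tfrac12 u_k^T W_k u_k \leq 0$, the right-hand side is nonpositive (since $\tau_k > 0$) while the left-hand side is nonnegative by Cauchy decrease~\eqref{eq.cauchy_dec} and $\sigma \in (0,1)$, so the inequality holds. Otherwise, the update rule~\eqref{eq.tau} guarantees $\tau_k \leq \tau_k^{\mathrm{trial}}$, and multiplying the definition of $\tau_k^{\mathrm{trial}}$ in~\eqref{eq.pi_trial} through by the positive quantity $g_k^T d_k + \tfrac12 u_k^T W_k u_k$ yields precisely the desired inequality.

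The main obstacle is really just the contradiction argument in the second case of positivity: it requires recognizing that the structure of the KKT system~\eqref{eq.perturbed_newton_sys} together with positive semidefiniteness of $W_k$ on $\mathrm{null}(J_k)$ forces the sign condition $g_k^T d_k + \tfrac12 u_k^T W_k u_k \leq 0$ whenever $J_k^T c_k = 0$. Once this is in place, all remaining steps are direct algebraic manipulations of the update rule. This argument parallels the analogous reasoning in the noiseless setting (cf.~\cite{CurtScheWaec10}), and no properties beyond Assumptions~\ref{assumption.err0} and~\ref{assumption.bounded} are needed.
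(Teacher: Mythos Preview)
Your proposal is correct and follows essentially the same approach as the paper: the key observation---that $J_k^T c_k = 0$ forces $v_k = 0$, $d_k = u_k \in \mathrm{null}(J_k)$, and hence $g_k^T d_k + \tfrac12 u_k^T W_k u_k = -\tfrac12 u_k^T W_k u_k \leq 0$---is exactly the argument the paper invokes by referring back to Lemma~\ref{lemma.delta_m_stat_meas}. Your presentation is more explicit and self-contained (organized directly around the two cases in~\eqref{eq.pi_trial} rather than deferring to the noiseless analogue), but the underlying logic is identical.
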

\begin{proof}
  Since the algorithm does not terminate, for all $k \in \NN$ either $J_k^Tc_k \neq 0$ or $g_k + J_k^Ty_{k+1} \neq 0$.  Thus, the proof follows like that of Lemma~\ref{lemma.delta_m_stat_meas}.
\end{proof}

The aim of our next three results is to establish a bound on the difference between $\Delta m_k(d_k,\tau_k)$ and $\Delta \mtrue_k(\dtrue_k,\tautrue_k)$ for all $k \in \NN$.  Ultimately, the purpose of this bound is to show that, as the algorithm is designed to drive $\Delta m_k(d_k,\tau_k)$ to small values, this in turn means that $\Delta \mtrue_k(\dtrue_k,\tautrue_k)$ is driven to be small as well.  We establish the bound in two steps, first by proving a bound on the difference between $\Delta m_k$ and $\Delta \mtrue_k$ with respect to $(d_k,\tau_k)$, then by proving a bound on the difference between $\Delta \mtrue_k$ with respect to $(d_k,\tau_k)$ and $(\dtrue_k,\tautrue_k)$.

\begin{lemma}\label{lemma.m1}
  Suppose that Assumptions~\ref{assumption.err0} and \ref{assumption.bounded} hold.  Then, for all $k \in \NN$, one has that $\Delta m_k(d_k, \tau_k) = \Delta \mtrue_k(d_k, \tau_k) + \Ekmone$, where 
  \begin{align*}
    & \Ekmone= \\
    & - (\tautrue_k +  \epsktau) \epskg^T( \dtrue_k + \epskd) + \|\ctrue_k + \epskc \| - \|\ctrue_k\|\\
    &-\left(\|\ctrue_k + \Jtrue_k \dtrue_k + \Jtrue_k \epskd  + \epskc + \epskJ \dtrue_k + \epskJ \epskd\| - \| \ctrue_k + \Jtrue_k \dtrue_k + \Jtrue_k \epskd \| \right).
  \end{align*}
  Moreover, supposing that Assumptions~\ref{assumption.s_bounded}, \ref{assumption.sigma_J}, and \ref{ass.noise_level} also hold, then for all $k \in \NN$ with $k \geq k_J$ and $\eps_d \in [0,\infty)$ defined in Corollary~\ref{lemma.ub_epskd}, one has
  \begin{equation*}
    |\Ekmone| \leq (\tautrue_k+   \epsktau) \eps_g \eps_d +  2 \eps_c + \eps_J \eps_d + \left( (\tautrue_k  +  \epsktau) \eps_g + \eps_J\right) \| \dtrue_k\| := \calEkmone.
  \end{equation*}
\end{lemma}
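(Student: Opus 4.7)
The plan is to derive the expression for $\Ekmone$ by direct algebraic substitution into the definitions of the model reductions, and then to bound $|\Ekmone|$ by repeated application of the (reverse) triangle inequality together with the elementary noise bounds from \eqref{eq.errors3} and the bound $\|\epskd\| \leq \eps_d$ supplied by Corollary~\ref{lemma.ub_epskd}.

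For the identity, I would start from the definition \eqref{eq.delta_m_tilde} (and its noiseless analogue implicit in the paragraph introducing $\Delta \mtrue_k$) and write
\begin{equation*}
  \Delta m_k(d_k,\tau_k) - \Delta \mtrue_k(d_k,\tau_k) = -\tau_k(g_k - \gtrue_k)^T d_k + (\|c_k\| - \|\ctrue_k\|) + (\|\ctrue_k + \Jtrue_k d_k\| - \|c_k + J_k d_k\|).
\end{equation*}
Then I would substitute $\tau_k = \tautrue_k + \epsktau$, $g_k - \gtrue_k = \epskg$, $d_k = \dtrue_k + \epskd$, $c_k = \ctrue_k + \epskc$, and $J_k = \Jtrue_k + \epskJ$ into each term. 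Expanding $c_k + J_k d_k = \ctrue_k + \epskc + (\Jtrue_k + \epskJ)(\dtrue_k + \epskd)$ and $\ctrue_k + \Jtrue_k d_k = \ctrue_k + \Jtrue_k \dtrue_k + \Jtrue_k \epskd$ produces, after collection, exactly the stated expression for $\Ekmone$. This step is mostly bookkeeping and requires only that the error decomposition \eqref{eq.errors_k} and \eqref{eq.errors3} be used consistently.

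For the bound, I would treat the three parts of $\Ekmone$ in turn. For the first term, Cauchy--Schwarz with $\|\epskg\| \leq \eps_g$ (from \eqref{eq.errors3}) and $\|\dtrue_k + \epskd\| \leq \|\dtrue_k\| + \eps_d$ (using Corollary~\ref{lemma.ub_epskd}) gives
\begin{equation*}
  |(\tautrue_k + \epsktau)\epskg^T(\dtrue_k + \epskd)| \leq (\tautrue_k + \epsktau)\eps_g(\|\dtrue_k\| + \eps_d).
\end{equation*}
For the second term, the reverse triangle inequality yields $|\|\ctrue_k + \epskc\| - \|\ctrue_k\|| \leq \eps_c$. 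For the third term, the reverse triangle inequality applied to
\begin{equation*}
  (\ctrue_k + \Jtrue_k \dtrue_k + \Jtrue_k \epskd) + (\epskc + \epskJ \dtrue_k + \epskJ \epskd)
\end{equation*}
and the original vector $\ctrue_k + \Jtrue_k \dtrue_k + \Jtrue_k \epskd$ bounds the difference of norms by $\|\epskc\| + \|\epskJ\|(\|\dtrue_k\| + \eps_d) \leq \eps_c + \eps_J(\|\dtrue_k\| + \eps_d)$.

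Summing the three bounds and grouping the terms that are linear in $\|\dtrue_k\|$ separately from the remaining constant terms recovers exactly $\calEkmone$ as stated. No step here is a serious obstacle: the only subtlety is keeping track of the cross terms (most importantly, $(\tautrue_k+\epsktau)\eps_g \eps_d$ and $\eps_J \eps_d$) that arise from the $\epskd$ contribution to $\|d_k\|$, and ensuring that the assumptions needed to invoke Corollary~\ref{lemma.ub_epskd} (namely Assumptions~\ref{assumption.s_bounded}, \ref{assumption.sigma_J}, and \ref{ass.noise_level}, together with $k \geq k_J$) are active precisely for the quantitative bound but not for the identity.
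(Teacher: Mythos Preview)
Your proposal is correct and follows essentially the same approach as the paper: derive the identity for $\Ekmone$ by direct substitution of the error decompositions \eqref{eq.errors3} and \eqref{eq.errors_k} into the definition of the model reductions, then obtain the bound via the (reverse) triangle inequality, Cauchy--Schwarz, and Corollary~\ref{lemma.ub_epskd}. The only cosmetic difference is that the paper splits $\epskg^T(\dtrue_k+\epskd)$ and $\epskJ(\dtrue_k+\epskd)$ into their $\dtrue_k$- and $\epskd$-parts before bounding, whereas you bound $\|\dtrue_k+\epskd\|\le\|\dtrue_k\|+\eps_d$ first; both routes give the identical final expression for $\calEkmone$.
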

\begin{proof}
  For any $k \in \NN$, one finds from \eqref{eq.errors3}, \eqref{eq.delta_m_tilde}, and \eqref{eq.errors_k} that
  \begin{align*}
    &\ \Delta m_k(d_k, \tau_k) - \Delta \mtrue_k(d_k, \tau_k)  \\
    =&\ - \tau_k \epskg^T d_k+ \|c_k\| - \|\ctrue_k\| - (\|c_k + J_k d_k\| - \| \ctrue_k + \Jtrue_k d_k \| )  \\
    =&\ - (\tautrue_k +  \epsktau) \epskg^T( \dtrue_k + \epskd)  + \|\ctrue_k + \epskc \| - \|\ctrue_k\|\\
     &\ - (\|\ctrue_k + \epskc + \Jtrue_k \dtrue_k + \Jtrue_k \epskd + \epskJ \dtrue_k + \epskJ \epskd\| - \| \ctrue_k + \Jtrue_k \dtrue_k + \Jtrue_k \epskd \| ),
  \end{align*}
  as claimed.  Now by the additional assumptions of the lemma, Corollary~\ref{lemma.ub_epskd}, and the triangle inequality (applied twice), one finds that
  \begin{align*}
    |\Ekmone|
    \leq&\ (\tautrue_k + \epsktau) |\epskg^T \epskd | + \|\epskc\| \\
        &\ + \|\epskc +  \epskJ \epskd\| + ( (\tautrue_k + \epsktau) \eps_g + \eps_J ) \| \dtrue_k\| \\
    \leq&\ (\tautrue_k+   \epsktau) \eps_g \eps_d +  2 \eps_c + \eps_J \eps_d + ( (\tautrue_k +  \epsktau) \eps_g + \eps_J ) \| \dtrue_k\|,
  \end{align*}
  which completes the proof.
\end{proof}

Our next lemma provides the aforementioned complementary result.

\begin{lemma}\label{lemma.m2}
  Suppose that Assumptions~\ref{assumption.err0} and \ref{assumption.bounded} hold.  Then, for all $k \in \NN$, one has that $\Delta \mtrue_k (d_k, \tau_k) = \Delta \mtrue_k (\dtrue_k, \tautrue_k) + \Ekmtwo$, where
  \begin{equation*}
    \Ekmtwo = - \tautrue_k \gtrue_k^T \epskd - \epsktau \gtrue_k^T \dtrue_k - \epsktau \gtrue_k^T \epskd + \|\ctrue_k + \Jtrue_k \dtrue_k\| - \|\ctrue_k + \Jtrue_k \dtrue_k + \Jtrue_k \epskd\|.
  \end{equation*}
  Moreover, supposing that Assumptions~\ref{assumption.s_bounded}, \ref{assumption.sigma_J}, and \ref{ass.noise_level} also hold, then for all $k \in \NN$ with $k \geq k_J$ and $\eps_d \in [0,\infty)$ defined in Corollary~\ref{lemma.ub_epskd}, one has
  \begin{equation*}
    |\Ekmtwo| \leq (\tautrue_k + \epsktau) \|\gtrue_k\| \eps_d + |\epsktau| \|\gtrue_k\| \|\dtrue_k\| + \| \Jtrue_k\| \eps_d =: \calEkmtwo.
  \end{equation*}
\end{lemma}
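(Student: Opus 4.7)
The plan is to mirror the strategy used for Lemma~\ref{lemma.m1}, working directly from the definition of the noiseless model reduction. Since $\Delta \mtrue_k(d,\tau) = -\tau \gtrue_k^T d + \|\ctrue_k\| - \|\ctrue_k + \Jtrue_k d\|$ depends only on the true quantities $\gtrue_k$, $\ctrue_k$, and $\Jtrue_k$, the difference $\Delta \mtrue_k(d_k,\tau_k) - \Delta \mtrue_k(\dtrue_k,\tautrue_k)$ isolates only the perturbations $\epsktau = \tau_k - \tautrue_k$ and $\epskd = d_k - \dtrue_k$, and no gradient, constraint, or Jacobian errors appear. This is in contrast to Lemma~\ref{lemma.m1}, where the noise in $g_k$, $c_k$, and $J_k$ also contributed.

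First, I would write out
\begin{equation*}
  \Delta \mtrue_k(d_k,\tau_k) - \Delta \mtrue_k(\dtrue_k,\tautrue_k) = -\tau_k \gtrue_k^T d_k + \tautrue_k \gtrue_k^T \dtrue_k + \|\ctrue_k + \Jtrue_k \dtrue_k\| - \|\ctrue_k + \Jtrue_k d_k\|.
\end{equation*}
Next, I would substitute $\tau_k = \tautrue_k + \epsktau$ and $d_k = \dtrue_k + \epskd$ into the first two terms and expand, which cancels the $\tautrue_k \gtrue_k^T \dtrue_k$ contribution and produces the three cross terms $-\tautrue_k \gtrue_k^T \epskd$, $-\epsktau \gtrue_k^T \dtrue_k$, and $-\epsktau \gtrue_k^T \epskd$. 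The last two terms of the display already match the final two terms in the claimed $\Ekmtwo$ expression (using $d_k = \dtrue_k + \epskd$ inside the second norm), which establishes the identity.

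For the bound, I would apply the triangle inequality to the three cross terms together with Cauchy--Schwarz to obtain $\tautrue_k \|\gtrue_k\| \|\epskd\|$, $|\epsktau| \|\gtrue_k\| \|\dtrue_k\|$, and $|\epsktau| \|\gtrue_k\| \|\epskd\|$. For the difference of norms, the reverse triangle inequality gives
\begin{equation*}
  \bigl| \|\ctrue_k + \Jtrue_k \dtrue_k\| - \|\ctrue_k + \Jtrue_k \dtrue_k + \Jtrue_k \epskd\| \bigr| \leq \|\Jtrue_k \epskd\| \leq \|\Jtrue_k\| \|\epskd\|.
\end{equation*}
Under the additional assumptions, Corollary~\ref{lemma.ub_epskd} yields $\|\epskd\| \leq \eps_d$ for all $k \geq k_J$, after which grouping the first and third cross terms gives the coefficient $(\tautrue_k + \epsktau)\|\gtrue_k\|\eps_d = \tau_k \|\gtrue_k\|\eps_d$ on $\eps_d$ (noting that $\tau_k > 0$ by Lemma~\ref{lemma.well_defined}), together with $|\epsktau| \|\gtrue_k\| \|\dtrue_k\|$ and $\|\Jtrue_k\| \eps_d$, matching $\calEkmtwo$.

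There is no real obstacle here: the identity is a routine algebraic expansion and the bound requires only triangle inequality, reverse triangle inequality, and substitution of the previously established uniform bound on $\|\epskd\|$. The only point requiring slight care is handling the sign of $\epsktau$ when consolidating the coefficient of $\|\gtrue_k\|\eps_d$; since $\tau_k = \tautrue_k + \epsktau > 0$, one obtains exactly the form stated in the lemma without needing an absolute value on that term.
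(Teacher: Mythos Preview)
Your proposal is correct and follows essentially the same route as the paper: expand $\Delta \mtrue_k(d_k,\tau_k)$ via $\tau_k=\tautrue_k+\epsktau$ and $d_k=\dtrue_k+\epskd$ to isolate $\Ekmtwo$, then bound using Cauchy--Schwarz, the reverse triangle inequality, and Corollary~\ref{lemma.ub_epskd}. The one subtlety you flag---combining $-\tautrue_k\gtrue_k^T\epskd-\epsktau\gtrue_k^T\epskd=-\tau_k\gtrue_k^T\epskd$ \emph{before} taking absolute values so that the coefficient is $\tau_k=\tautrue_k+\epsktau$ rather than $\tautrue_k+|\epsktau|$---is exactly how the paper handles it as well.
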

\begin{proof}
  For any $k \in \NN$, one finds from \eqref{eq.errors3}, \eqref{eq.delta_m_tilde}, and \eqref{eq.errors_k} that
  \begin{align*}
    &\ \Delta \mtrue_k(d_k, \tau_k) \\
    =&\ - \tau_k \gtrue_k^T d_k + \|\ctrue_k\| - \|\ctrue_k + \Jtrue_k d_k\| \\
    =&\ - \tautrue_k \gtrue_k^T \dtrue_k + \|\ctrue_k\| -  \|\ctrue_k + \Jtrue_k \dtrue_k \| \\
     &\ - \tautrue_k \gtrue_k^T \epskd - \epsktau \gtrue_k^T \dtrue_k - \epsktau \gtrue_k^T \epskd + \|\ctrue_k + \Jtrue_k \dtrue_k\| - \|\ctrue_k + \Jtrue_k \dtrue_k + \Jtrue_k \epskd\| \\
    =&\ \Delta \mtrue_k(\dtrue_k, \tautrue_k) + \Ekmtwo,
  \end{align*}
  as claimed.  Now by the additional assumptions of the lemma, Corollary~\ref{lemma.ub_epskd}, and the triangle inequality, one finds that
  \begin{align*}
    |\Ekmtwo| 
    &\leq (\tautrue_k + \epsktau) |\gtrue_k^T \epskd | + |\epsktau| \|\gtrue_k\| \|\dtrue_k\| + \| \Jtrue_k \epskd\| \\
    &\leq (\tautrue_k + \epsktau) \|\gtrue_k\| \eps_d + |\epsktau| \|\gtrue_k\| \|\dtrue_k\| + \| \Jtrue_k\| \eps_d,
  \end{align*}
  which completes the proof.
\end{proof}

\begin{remark}\label{rem.ugly_but_it_is_what_it_is}
  If, for some $k \in \NN$, one finds $\vareps_{f,k} = \|\vareps_{c,k}\| = \|\vareps_{g,k}\| = \|\vareps_{J,k}\| = 0$, then $\Ekmone = 0$.  However, these errors being zero does not imply that $\epsktau$ is zero since one might find $\epsktau \neq 0$ due to updates of the merit parameter in prior iterations.  Thus, one finds that noise in any iteration can cause a nonzero difference between $\Delta \mtrue_k (d_k, \tau_k)$ and $\Delta \mtrue_k (\dtrue_k, \tautrue_k)$ for any subsequent index $k$.
\end{remark}

The following corollary follows easily from the previous two lemmas.  Thus, we state the corollary without a proof, since it is straightforward.

\begin{corollary}\label{cor.m}
  Suppose that Assumptions~\ref{assumption.err0} and \ref{assumption.bounded} hold.  Then, for all $k \in \NN$, one has that $\Delta m_k(d_k, \tau_k) = \Delta \mtrue_k(\dtrue_k, \tautrue_k) + \Ekm$, where $\Ekm := \Ekmone + \Ekmtwo$ with $\Ekmone$ and $\Ekmtwo$ defined in Lemma~\ref{lemma.m1} and Lemma~\ref{lemma.m2}, respectively.  Moreover, supposing that Assumptions~\ref{assumption.s_bounded}, \ref{assumption.sigma_J}, and \ref{ass.noise_level} also hold, then for all $k \in \NN$ with $k \geq k_J$ and $\eps_d \in [0,\infty)$ defined in Corollary~\ref{lemma.ub_epskd}, one has
  \begin{equation}\label{eq.m_bound}
    |\Ekm| \leq \calEkm := \calEkmone  + \calEkmtwo 
  \end{equation}
  where $\calEkmone$ and $\calEkmtwo$ are defined in Lemmas~\ref{lemma.m1} and \ref{lemma.m2}, respectively.
\end{corollary}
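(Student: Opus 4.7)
The proof is essentially a direct chaining of Lemmas~\ref{lemma.m1} and \ref{lemma.m2}, followed by a triangle-inequality bound. I do not anticipate any real obstacle; the corollary is set up precisely so that one can add the two identities. The plan is as follows.

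First, I would fix an arbitrary $k \in \NN$ and apply Lemma~\ref{lemma.m1}, which gives the identity
\begin{equation*}
  \Delta m_k(d_k, \tau_k) = \Delta \mtrue_k(d_k, \tau_k) + \Ekmone.
\end{equation*}
Next, applied to the same index $k$, Lemma~\ref{lemma.m2} yields
\begin{equation*}
  \Delta \mtrue_k(d_k, \tau_k) = \Delta \mtrue_k(\dtrue_k, \tautrue_k) + \Ekmtwo.
\end{equation*}
Substituting the second identity into the first and using the definition $\Ekm := \Ekmone + \Ekmtwo$ produces
\begin{equation*}
  \Delta m_k(d_k, \tau_k) = \Delta \mtrue_k(\dtrue_k, \tautrue_k) + \Ekm,
\end{equation*}
which is the first claim.

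For the bound, I would now invoke the additional Assumptions~\ref{assumption.s_bounded}, \ref{assumption.sigma_J}, and \ref{ass.noise_level} so that Corollary~\ref{lemma.ub_epskd} applies and the quantitative parts of Lemmas~\ref{lemma.m1} and \ref{lemma.m2} are available, namely $|\Ekmone| \leq \calEkmone$ and $|\Ekmtwo| \leq \calEkmtwo$ for all $k \geq k_J$. The triangle inequality then gives
\begin{equation*}
  |\Ekm| = |\Ekmone + \Ekmtwo| \leq |\Ekmone| + |\Ekmtwo| \leq \calEkmone + \calEkmtwo = \calEkm,
\end{equation*}
which is~\eqref{eq.m_bound}. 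No nontrivial step is required, which is why the statement is phrased as a corollary; the actual work was done in Lemmas~\ref{lemma.m1} and \ref{lemma.m2}, and in the bounds on $\|v_k - \vtrue_k\|$, $\|u_k - \utrue_k\|$, and $\|d_k - \dtrue_k\|$ provided by Lemmas~\ref{lemma.ub_epskv} and \ref{lemma.ub_epsku} together with Corollary~\ref{lemma.ub_epskd} on which the expressions $\calEkmone$ and $\calEkmtwo$ implicitly rely.
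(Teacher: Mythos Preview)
Your proposal is correct and matches the paper's intended approach exactly: the paper does not even write out a proof, stating only that the corollary ``follows easily from the previous two lemmas'' and is ``straightforward.'' Your chaining of the identities from Lemmas~\ref{lemma.m1} and \ref{lemma.m2} followed by the triangle inequality is precisely what is meant.
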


We have by now established bounds on differences between the noisy and noiseless model reductions.  For our next two lemmas, we explore the effect of steps on changes in the \emph{noiseless} merit function values.  By construction of the algorithm, it follows that each step yields an upper bound on the change in the \emph{noisy} merit function value given by \eqref{eq.armijo}.  However, to establish that the algorithm also ensures certain progress with respect to the noiseless quantities, the next two lemmas provide bounds with respect to the noiseless merit function $\phitrue$.  The proof of our next lemma is similar to that of \cite[Lemma 3.5]{CurtScheWaec10}.

\begin{lemma}\label{lemma.dec_phi_delta_m}
  Suppose that Assumptions~\ref{assumption.err0}, \ref{assumption.bounded}, and \ref{assumption.s_bounded}  hold.  Then, for all $k \in \NN$, one finds for all $\alpha \in (0, 1]$ with $\alpha d_k^s \geq -\eta_s$ that
  \begin{equation*}
    \phitrue(z_k+\alpha \dhat_k, \tau_k) - \phitrue(z_k, \tau_k) \leq - \alpha \Delta \mtrue_k(d_k, \tau_k) + \xizero{\tau_k} \alpha^2 \|d_k\|^2,
  \end{equation*}
  where $\xizero{\tau_k} := \max \{\tfrac{1}{2}(\tau_k L_g + L_J), \frac{\tau_k \mu}{1-\eta_s} \}$.
\end{lemma}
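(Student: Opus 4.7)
The plan is to split $\phitrue(z_k + \alpha \dhat_k, \tau_k) - \phitrue(z_k, \tau_k)$ into its objective and constraint contributions, bound each using a second-order Taylor-type argument, and then recognize the linear-in-$\alpha$ terms as $-\alpha \Delta\mtrue_k(d_k, \tau_k)$. Since $z_k + \alpha \dhat_k = [x_k + \alpha d_k^x;\, s_k + \alpha S_k d_k^s]$ and the hypothesis $\alpha d_k^s \geq -\eta_s$ (componentwise) ensures $1 + \alpha d_k^{s,(i)} \geq 1 - \eta_s > 0$, each evaluation is well defined.

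For the objective part $\tau_k(\ftrue(z_k + \alpha \dhat_k) - \ftrue(z_k))$, I would treat the $\ftrue_0$ piece and the log-barrier piece separately. The $\ftrue_0$ piece is controlled by the $L_g$-Lipschitz continuity of $\gtrue_0$ from Assumption~\ref{assumption.err0}, giving $\ftrue_0(x_k + \alpha d_k^x) - \ftrue_0(x_k) \leq \alpha \gtrue_0(x_k)^T d_k^x + \tfrac12 L_g \alpha^2 \|d_k^x\|^2$. For the barrier piece, factoring $s_k^{(i)}$ out of the log reduces it to $-\mu \sum_i \log(1 + \alpha d_k^{s,(i)})$, which must be bounded using the scalar inequality $-\log(1+t) \leq -t + \frac{t^2}{1-\eta_s}$ valid for $t \geq -\eta_s$. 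Combined with the fact that $\gtrue_k^T d_k = \gtrue_0(x_k)^T d_k^x - \mu e^T d_k^s$, this produces
\begin{equation*}
  \tau_k(\ftrue(z_k + \alpha \dhat_k) - \ftrue(z_k)) \leq \alpha \tau_k \gtrue_k^T d_k + \tfrac{1}{2} \tau_k L_g \alpha^2 \|d_k^x\|^2 + \tfrac{\tau_k \mu}{1-\eta_s} \alpha^2 \|d_k^s\|^2.
\end{equation*}

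For the constraint part, I would write $\ctrue(z_k + \alpha \dhat_k) = \ctrue_k + \alpha \Jtrue_k d_k + r_k(\alpha)$, where $r_k(\alpha) = \ctrue_I(x_k + \alpha d_k^x) - \ctrue_I(x_k) - \alpha \Jtrue_I(x_k) d_k^x$ (noting that the slack contribution is linear, so no remainder arises there and $\Jtrue_k d_k = \Jtrue_I(x_k) d_k^x + S_k d_k^s$). The $L_J$-Lipschitz continuity of $\Jtrue_I$ gives $\|r_k(\alpha)\| \leq \tfrac12 L_J \alpha^2 \|d_k^x\|^2$. Then convexity of $\|\cdot\|$ yields, for $\alpha \in (0,1]$,
\begin{equation*}
  \|\ctrue(z_k + \alpha \dhat_k)\| - \|\ctrue_k\| \leq -\alpha(\|\ctrue_k\| - \|\ctrue_k + \Jtrue_k d_k\|) + \tfrac{1}{2} L_J \alpha^2 \|d_k^x\|^2.
\end{equation*}

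Adding the two bounds, the linear-in-$\alpha$ terms collapse to $-\alpha \Delta\mtrue_k(d_k, \tau_k)$ by the definition of $\Delta\mtrue_k$, and the quadratic terms combine via $\|d_k^x\|^2 + \|d_k^s\|^2 = \|d_k\|^2$ against the maximum defining $\xizero{\tau_k}$. The main subtlety is the barrier bound: the naive Taylor estimate yields $\frac{t^2}{2(1-\eta_s)^2}$, which is not the constant in the stated $\xizero{\tau_k}$. I would therefore verify the sharper-looking inequality $-\log(1+t) \leq -t + \frac{t^2}{1-\eta_s}$ directly by showing that $g(t) := \log(1+t) - t + \tfrac{t^2}{1-\eta_s}$ satisfies $g(0) = g'(0) = 0$ and $g'(t)$ has the same sign as $t$ on $[-\eta_s, \infty)$ (since $\tfrac{2}{1-\eta_s} - \tfrac{1}{1+t} \geq \tfrac{1}{1-\eta_s} > 0$ there), so $g \geq 0$. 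This is the only nontrivial step; the remainder is bookkeeping.
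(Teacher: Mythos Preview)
Your proposal is correct and follows essentially the same approach as the paper's proof: split into the $\ftrue_0$, log-barrier, and constraint-norm pieces, apply the Lipschitz/Taylor bounds and convexity of the norm, and collect terms. The only cosmetic difference is in how the barrier bound is obtained---the paper uses a mean-value-theorem estimate $|\log(\xi+\xi')-\log\xi-\xi'/\xi|\le \tfrac{1}{1-\eta_s}(\xi'/\xi)^2$, while you prove the equivalent scalar inequality $-\log(1+t)\le -t + t^2/(1-\eta_s)$ directly; both yield the same constant $\tfrac{\tau_k\mu}{1-\eta_s}$ and the rest of the argument is identical.
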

\begin{proof}
  For any real numbers $\xi$ and $\xi'$ with $\xi > 0$ and $\xi' \geq -\eta_s \xi$, one has
  \begin{align*}
    \left|\log(\xi + \xi') - \log \xi - \frac{\xi'}{\xi} \right| \leq \sup_{\xi'' \in [\xi,\xi+\xi']}\left| \frac{\xi'}{\xi''} - \frac{\xi'}{\xi} \right| \leq \frac{1}{1 - \eta_s} \left(\frac{\xi'}{\xi}\right)^2.
  \end{align*}
  Thus, by Taylor's theorem and Assumptions~\ref{assumption.err0}, \ref{assumption.bounded}, and \ref{assumption.s_bounded}, for any $k \in \NN$
  \begin{align*}
    &\  \phitrue(z_k+\alpha \dhat_k, \tau_k) - \phitrue(z_k, \tau_k) \\
    =&\ \tau_k \left(\ftrue_0(x_k + \alpha d_k^x) - \ftrue_0(x_k) - \mu \sum_{i=1}^q \log((s_k + \alpha S_k d_k^s)^{(i)}) + \mu \sum_{i=1}^q \log(s_k^{(i)}) \right) \\
    &\ + \| \ctrue_I(x_k + \alpha d_k^x) + (s_k + \alpha S_k d_k^s) \| - \| \ctrue_I(x_k) + s_k \| \\
    \leq&\ \tau_k \alpha \gtrue_0(x_k)^T d_k^x + \tfrac{1}{2}\tau_k \alpha^2 L_g \|d_k^x\|^2 - \tau_k\alpha \mu e^T d_k^s + \frac{\tau_k \mu}{1 - \eta_s} \alpha^2 \|d_k^s\|^2 \\
    &+ \|\ctrue_I(x_k) + \alpha \nabla \ctrue_I(x_k)^T d_k^x + (s_k + \alpha S_k d_k^s) \| - \|\ctrue_I(x_k) + s_k\| + \tfrac{1}{2} \alpha^2 L_J \|d_k^x\|^2 \\
    =&\ \tau_k \alpha \gtrue_k^T d_k + \tfrac{1}{2} \alpha^2 (\tau_k  L_g + L_J ) \|d_k^x\|^2  + \frac{\tau_k \mu}{1 - \eta_s} \alpha^2 \|d_k^s\|^2 \\
    &\ + \|\ctrue(z_k) + \alpha \Jtrue_k d_k \| - \|\ctrue(z_k)\| \\ 
    \leq&\ -\alpha (-\tau_k  \gtrue_k^T d_k + \|\ctrue(z_k) \| - \|\ctrue(z_k) + \Jtrue_k d_k\| ) + \xizero{\tau_k} \alpha^2 \|d_k\|^2\\
    =&\ -\alpha \Delta \mtrue_k(d_k, \tau_k) + \xizero{\tau_k} \alpha^2 \|d_k\|^2,
  \end{align*}
  as desired.
\end{proof}

Our next lemma extends the result of Lemma~\ref{lemma.dec_phi_delta_m} by expressing the right-hand side of the inequality in terms of the stationarity measure $\Delta \mtrue_k(\dtrue_k, \tautrue_k)$.

\begin{lemma}\label{lemma.dec_phi_delta_m_2}
  Suppose that Assumptions~\ref{assumption.err0}, \ref{assumption.bounded}, and \ref{assumption.s_bounded}  hold.  Then, for all $k \in \NN$, one finds for all $\alpha \in (0, 1]$ with $\alpha d_k^s \geq -\eta_s$ that
  \begin{align*}
    \phitrue(z_k+\alpha \dhat_k, \tau_k) - \phitrue(z_k, \tau_k) & \leq - \alpha \Delta \mtrue_k(\dtrue_k, \tautrue_k)  - \alpha \Ekmtwo + \xizero{\tau_k} \alpha^2 \|\dtrue_k + \epskd\|^2,
  \end{align*}
  where $\Ekmtwo$ and $\xizero{\tau_k}$ are defined as in Lemmas~\ref{lemma.m2} and \ref{lemma.dec_phi_delta_m}, respectively.
\end{lemma}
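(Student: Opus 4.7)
The plan is a direct substitution into the bound already established in Lemma~\ref{lemma.dec_phi_delta_m}. Specifically, that lemma gives, for any $\alpha \in (0,1]$ with $\alpha d_k^s \geq -\eta_s$,
\[
\phitrue(z_k + \alpha \dhat_k, \tau_k) - \phitrue(z_k, \tau_k) \leq -\alpha \Delta \mtrue_k(d_k, \tau_k) + \xizero{\tau_k} \alpha^2 \|d_k\|^2,
\]
and Lemma~\ref{lemma.m2} identifies the splitting $\Delta \mtrue_k(d_k, \tau_k) = \Delta \mtrue_k(\dtrue_k, \tautrue_k) + \Ekmtwo$. Substituting this identity into the right-hand side converts the noisy model reduction into the noiseless one, at the cost of the additive error term $-\alpha \Ekmtwo$.

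To handle the quadratic remainder, I would invoke the definition of the step error in \eqref{eq.errors_k}, namely $\epskd = d_k - \dtrue_k$, so that $d_k = \dtrue_k + \epskd$ and hence $\|d_k\|^2 = \|\dtrue_k + \epskd\|^2$. Plugging this into the $\xizero{\tau_k} \alpha^2 \|d_k\|^2$ term on the right-hand side finishes the substitution.

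Chaining these two replacements yields precisely
\[
\phitrue(z_k + \alpha \dhat_k, \tau_k) - \phitrue(z_k, \tau_k) \leq -\alpha \Delta \mtrue_k(\dtrue_k, \tautrue_k) - \alpha \Ekmtwo + \xizero{\tau_k} \alpha^2 \|\dtrue_k + \epskd\|^2,
\]
as required. There is no real obstacle here: the result is a bookkeeping step that re-expresses the bound of Lemma~\ref{lemma.dec_phi_delta_m} in terms of the true stationarity measure $\Delta \mtrue_k(\dtrue_k, \tautrue_k)$ using the decomposition from Lemma~\ref{lemma.m2}. The only point worth noting is that no additional assumption (beyond those already invoked in Lemmas~\ref{lemma.dec_phi_delta_m} and \ref{lemma.m2}) is needed, since neither Corollary~\ref{lemma.ub_epskd} nor Assumptions~\ref{assumption.sigma_J}--\ref{ass.noise_level} are required to keep $\|\dtrue_k + \epskd\|^2$ and $\Ekmtwo$ as symbolic quantities; those uniform bounds on $\epskd$ and $\Ekmtwo$ are only needed later when one wishes to control the right-hand side quantitatively.
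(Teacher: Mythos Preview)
Your proposal is correct and follows essentially the same approach as the paper, which states that the result follows directly from \eqref{eq.errors_k} together with Lemmas~\ref{lemma.m2} and \ref{lemma.dec_phi_delta_m}. Your write-up simply makes explicit the two substitutions (the identity $\Delta \mtrue_k(d_k,\tau_k) = \Delta \mtrue_k(\dtrue_k,\tautrue_k) + \Ekmtwo$ and $d_k = \dtrue_k + \epskd$) that the paper leaves implicit.
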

\begin{proof}
  The result follows directly from \eqref{eq.errors_k} along with Lemmas~\ref{lemma.m2} and \ref{lemma.dec_phi_delta_m}.
\end{proof}

Following the result of Lemma~\ref{lemma.dec_phi_delta_m_2}, our next aim is to establish, for all $k \in \NN$, a relationship between the noiseless model reduction $\Delta \mtrue_k(\dtrue_k, \tautrue_k)$ and the norm of the noiseless search direction $\dtrue_k$.  This relationship is established by tying both quantities to a combination of $\|\utrue_k\|^2$ and $\|\Jtrue_k^T \ctrue_k\|^2$.

\begin{lemma}\label{lemma.delta_m_1}
  Suppose that Assumptions~\ref{assumption.err0}, \ref{assumption.bounded}, and \ref{assumption.s_bounded} hold.  Then, there exist $\xithree \in (0, \infty)$ and $\xifour \in (0,\infty)$ such that, for all $k \in \NN$, one has
  \begin{align*}
    \Delta \mtrue_k(\dtrue_k, \tautrue_k) &\geq \xithree (\tautrue_k \|\utrue_k\|^2 + \|\Jtrue_k^T \ctrue_k\|^2 ) \\ \text{and}\ \ 
    \|\dtrue_k\|^2 &\leq \xifour (\|\utrue_k\|^2 + \|\Jtrue_k^T \ctrue_k\|^2 ).
  \end{align*}
\end{lemma}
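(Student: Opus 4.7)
\textbf{Proof proposal for Lemma~\ref{lemma.delta_m_1}.}
The plan is to prove the two inequalities essentially independently, using ingredients that are already available. For the lower bound on the model reduction, the starting point is the noiseless analogue of the inequality in Lemma~\ref{lemma.well_defined}, namely
\[
\Delta \mtrue_k(\dtrue_k, \tautrue_k) \geq \tfrac12 \tautrue_k \utrue_k^T W_k \utrue_k + \sigma\bigl(\|\ctrue_k\| - \|\ctrue_k + \Jtrue_k \vtrue_k\|\bigr),
\]
which follows from the noiseless counterpart of the merit-parameter update rule \eqref{eq.pi_trial}--\eqref{eq.tau} (this is exactly what Lemma~\ref{lemma.delta_m_stat_meas}(c) formalizes, and the degenerate cases (a),(b) will be handled in passing since in those cases both $\|\Jtrue_k^T \ctrue_k\|$ and/or $\|\utrue_k\|$ vanish, making the desired bound immediate via the same Lagrangian identity $-\gtrue_k^T \utrue_k = \utrue_k^T W_k \utrue_k$).

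Given that inequality, I would bound each of its two summands from below separately. For the tangential term, since $\utrue_k \in \mathrm{null}(\Jtrue_k)$ by the second block of \eqref{eq.perturbed_newton_sys}, Assumption~\ref{assumption.bounded}(b) yields $\utrue_k^T W_k \utrue_k \geq \sigma_W \|\utrue_k\|^2$. For the normal term, I would invoke Lemma~\ref{lemma.prelim_cauchy_det} to obtain
\[
\|\ctrue_k\|\bigl(\|\ctrue_k\| - \|\ctrue_k + \Jtrue_k \vtrue_k\|\bigr) \geq \tfrac12 \delta_v \min\!\left\{\tfrac{1}{\|\Jtrue_k^T\Jtrue_k\|}, \omega\right\} \|\Jtrue_k^T \ctrue_k\|^2,
\]
then divide through by $\|\ctrue_k\|$. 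To convert this into a uniform constant I would use the uniform upper bound $\|\ctrue_k\| \leq \|\ctrue_I(x_k)\| + \|s_k\| \leq c_{I,\sup} + s_{\sup}$, which holds by Assumptions~\ref{assumption.err0} and \ref{assumption.s_bounded}, together with $\|\Jtrue_k^T \Jtrue_k\| \leq J_{\sup}^2$ from \eqref{eq.J_g_bnd} to bound the $\min\{\cdot,\omega\}$ from below by $\min\{1/J_{\sup}^2,\omega\}$. The case $\|\ctrue_k\|=0$ is trivial since then $\vtrue_k=0$ and $\Jtrue_k^T\ctrue_k=0$. Combining yields the desired inequality with
\[
\xithree = \min\!\left\{\tfrac{\sigma_W}{2},\ \tfrac{\sigma \delta_v}{2(c_{I,\sup}+s_{\sup})}\min\!\left\{\tfrac{1}{J_{\sup}^2},\omega\right\}\right\}.
\]

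The second inequality is much cleaner. I would decompose $\dtrue_k = \utrue_k + \vtrue_k$ and observe that $\utrue_k \in \mathrm{null}(\Jtrue_k)$ while $\vtrue_k \in \mathrm{range}(\Jtrue_k^T)$ (the latter is imposed explicitly by the feasible set of \eqref{eq.normal_tr_step}). These two subspaces are orthogonal, so the Pythagorean identity gives $\|\dtrue_k\|^2 = \|\utrue_k\|^2 + \|\vtrue_k\|^2$. The trust-region constraint in \eqref{eq.normal_tr_step} (applied to the noiseless data) then gives $\|\vtrue_k\| \leq \omega \|\Jtrue_k^T \ctrue_k\|$, so
\[
\|\dtrue_k\|^2 \leq \|\utrue_k\|^2 + \omega^2 \|\Jtrue_k^T \ctrue_k\|^2 \leq \max\{1,\omega^2\}\bigl(\|\utrue_k\|^2 + \|\Jtrue_k^T \ctrue_k\|^2\bigr),
\]
which gives the result with $\xifour = \max\{1,\omega^2\}$.

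The only genuine obstacle I anticipate is making the transfer from the noisy merit-parameter update to its noiseless counterpart rigorous, i.e., verifying that the lower bound on $\Delta\mtrue_k(\dtrue_k,\tautrue_k)$ stated above really does hold in all three cases of Lemma~\ref{lemma.delta_m_stat_meas}. In cases (a) and (b) the inequality degenerates but still holds (both the Cauchy term vanishes and, in (b), $\utrue_k=0$ as well, so the right-hand side is zero), while in case (c) it is precisely the statement needed. Once that bookkeeping is in place, the rest reduces to the uniform bounds on $\|\ctrue_k\|$ and $\|\Jtrue_k\|$ already guaranteed by Assumptions~\ref{assumption.err0} and \ref{assumption.s_bounded} together with \eqref{eq.J_g_bnd}.
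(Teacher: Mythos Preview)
Your proposal is correct and follows essentially the same route as the paper: for the first inequality you invoke the noiseless model-reduction lower bound from Lemma~\ref{lemma.delta_m_stat_meas}, then apply Assumption~\ref{assumption.bounded}(b) to the tangential term and Lemma~\ref{lemma.prelim_cauchy_det} together with the uniform bound on $\|\ctrue_k\|$ to the normal term, exactly as the paper does. For the second inequality you improve slightly on the paper's argument: the paper uses the crude bound $\|\dtrue_k\|^2 \leq 2(\|\utrue_k\|^2 + \|\vtrue_k\|^2)$ from the triangle inequality and arrives at $\xifour = 2\max\{1,\omega^2\}$, whereas you exploit the orthogonality of $\utrue_k \in \mathrm{null}(\Jtrue_k)$ and $\vtrue_k \in \mathrm{range}(\Jtrue_k^T)$ to get the exact Pythagorean decomposition $\|\dtrue_k\|^2 = \|\utrue_k\|^2 + \|\vtrue_k\|^2$ and hence the sharper constant $\xifour = \max\{1,\omega^2\}$.
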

\begin{proof}
  Consider arbitrary $k \in \NN$.  Following a similar argument as in Lemma~\ref{lemma.delta_m_stat_meas}, one has $\Delta \mtrue_k(\dtrue_k, \tautrue_k) \geq \tfrac12 \tautrue_k \utrue_k^T W_k \utrue_k  + \sigma (\|\ctrue_k\| - \|\ctrue_k + \Jtrue_k \vtrue_k\|)$, regardless of whether $x_k$ is an infeasible stationary point or a first-order stationary point for~\eqref{eq.barrier}.  Hence, along with Lemma~\ref{lemma.prelim_cauchy_det}, one has under the given conditions that
  \begin{align*}
    \Delta \mtrue_k(\dtrue_k, \tautrue_k) 
    &\geq \tfrac12 \tautrue_k \utrue_k^T  W_k \utrue_k  + \sigma (\|\ctrue_k\| - \|\ctrue_k + \Jtrue_k \vtrue_k\|) \\
    &\geq  \tfrac12 \tautrue_k \sigma_W  \|\utrue_k\|^2 + \sigma \xione \tfrac{\|\Jtrue_k^T \ctrue_k\|^2}{\|\ctrue_k\|}.
  \end{align*}
  The existence of $\xithree$, as claimed, now follows since $\|\ctrue_k\|$ is bounded under the conditions of the lemma.  On the other hand, by the triangle inequality and the definition of the normal subproblem in~\eqref{eq.normal_tr_step}, one finds that
  \begin{align*}
    \|\dtrue_k\|^2 & \leq  2 (\|\utrue_k\|^2 + \|\vtrue_k\|^2)  \leq  2 (\|\utrue_k\|^2 + \omega^2 \|\Jtrue_k^T \ctrue_k\|^2 ),
  \end{align*}
  from which the desired conclusion holds with $\xifour := 2 \max\{1, \omega^2\}$.
\end{proof}

Following Lemma~\ref{lemma.delta_m_1}, we are able to prove a stronger relationship between $\Delta \mtrue_k(\dtrue_k, \tautrue_k)$ and $\|\dtrue_k\|^2$ if there exist a positive lower bound for the elements of the noiseless merit parameter sequence $\{\tautrue_k\}$.  This may occur more generally, but let us at least establish that this is guaranteed to occur under Assumptions~\ref{assumption.err0}, \ref{assumption.bounded}, \ref{assumption.s_bounded}, and \ref{assumption.sigma_J}.  We do this through our next few lemmas.

\begin{lemma}\label{lemma.v_bnd}
  Suppose that Assumptions~\ref{assumption.err0}, \ref{assumption.bounded}, and \ref{assumption.s_bounded} hold.  Then, for all $k \in \NN$, one has $\|\vtrue_k \| \leq \omega J_{\sup} (c_{I,\sup} + s_{\sup})$.  Moreover, supposing Assumption~\ref{assumption.sigma_J} also holds, there exists $\xitwo \in (0,\infty)$ such that for all $k \in \NN$ with $k \geq k_J$ one finds $$\|\vtrue_k \| \leq  \xitwo (\| \ctrue_k\| - \|\ctrue_k + \Jtrue_k \vtrue_k\|).$$
\end{lemma}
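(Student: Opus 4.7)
The plan is to exploit the trust-region structure of~\eqref{eq.normal_tr_step} (applied to the noiseless quantities) together with the Cauchy-decrease bound already established in Lemma~\ref{lemma.prelim_cauchy_det}.

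For the first bound, I would simply invoke feasibility of $\vtrue_k$ in the noiseless analogue of~\eqref{eq.normal_tr_step}, which gives $\|\vtrue_k\| \leq \omega \|\Jtrue_k^T \ctrue_k\| \leq \omega \|\Jtrue_k\| \|\ctrue_k\|$ by submultiplicity. Using \eqref{eq.J_g_bnd} one has $\|\Jtrue_k\| \leq J_{\sup}$, and by Assumption~\ref{assumption.err0} together with Assumption~\ref{assumption.s_bounded} and the triangle inequality applied to $\ctrue_k = \ctrue_I(x_k) + s_k$, one has $\|\ctrue_k\| \leq c_{I,\sup} + s_{\sup}$. Combining these yields the stated bound.

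For the second bound, the key observation is that under Assumption~\ref{assumption.sigma_J}, for any $k \geq k_J$ one has $\sigma_{\min}(\Jtrue_k) \geq \gamma - \eps_J > 0$. Since $\Jtrue_k$ then has full row rank, $\Jtrue_k^T$ is injective and $\|\Jtrue_k^T \ctrue_k\| \geq (\gamma - \eps_J)\|\ctrue_k\|$. Combining this with Lemma~\ref{lemma.prelim_cauchy_det} gives
\begin{equation*}
  \|\ctrue_k\|(\|\ctrue_k\| - \|\ctrue_k + \Jtrue_k \vtrue_k\|) \geq \xione \|\Jtrue_k^T \ctrue_k\|^2 \geq \xione (\gamma - \eps_J)^2 \|\ctrue_k\|^2.
\end{equation*}
If $\|\ctrue_k\| = 0$ then $\Jtrue_k^T \ctrue_k = 0$ forces $\vtrue_k = 0$, and both sides of the desired inequality vanish; otherwise, dividing by $\|\ctrue_k\|$ yields $\|\ctrue_k\| - \|\ctrue_k + \Jtrue_k\vtrue_k\| \geq \xione(\gamma - \eps_J)^2 \|\ctrue_k\|$. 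Coupled with $\|\vtrue_k\| \leq \omega J_{\sup}\|\ctrue_k\|$ from the first part, this gives the desired linear bound with $\xitwo := \omega J_{\sup}/(\xione (\gamma - \eps_J)^2)$.

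The only subtlety to address is uniformity of the constant $\xitwo$, since $\xione$ as defined in Lemma~\ref{lemma.prelim_cauchy_det} depends on $k$ through $\|\Jtrue_k^T \Jtrue_k\|$. This will be handled by replacing $\xione$ with the uniform lower bound $\tfrac{1}{2}\delta_v \min\{J_{\sup}^{-2}, \omega\}$, which follows immediately from $\|\Jtrue_k\| \leq J_{\sup}$. I anticipate no real obstacle here; the main step is recognizing that full row rank converts the Cauchy-decrease bound (which is quadratic in $\|\Jtrue_k^T \ctrue_k\|$) into a linear lower bound in $\|\ctrue_k\|$, which can then be matched against the linear upper bound $\|\vtrue_k\| \lesssim \|\ctrue_k\|$ from the trust-region constraint.
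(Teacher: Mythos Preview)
Your proposal is correct and follows essentially the same approach as the paper: both use the trust-region constraint $\|\vtrue_k\| \leq \omega\|\Jtrue_k^T\ctrue_k\| \leq \omega\|\Jtrue_k\|\|\ctrue_k\|$ for the first bound, and for the second combine Lemma~\ref{lemma.prelim_cauchy_det} with the singular-value lower bound from Assumption~\ref{assumption.sigma_J} to get $\|\ctrue_k\| - \|\ctrue_k + \Jtrue_k\vtrue_k\| \gtrsim \|\ctrue_k\|$, then substitute back into the trust-region bound. Your treatment is in fact slightly more careful than the paper's, since you explicitly address the uniformity of $\xione$ over $k$ (via $\|\Jtrue_k\| \leq J_{\sup}$) and the degenerate case $\|\ctrue_k\| = 0$, both of which the paper leaves implicit.
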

\begin{proof}
  Under the initially stated conditions, the first desired conclusion follows from \eqref{eq.normal_tr_step} and \eqref{eq.J_g_bnd}.  Now suppose Assumption~\ref{assumption.sigma_J} also holds and consider arbitrary $k \in \NN$ with $k \geq k_J$.  Then, $\|\Jtrue_k^T \ctrue_k \| \geq \gamma \|\ctrue_k\|$. Hence, by Lemma~\ref{lemma.prelim_cauchy_det},
  \begin{align*}
    \|\ctrue_k \| - \|\ctrue_k + \Jtrue_k \vtrue_k \| \geq  \xione \tfrac{\|\Jtrue_k^T \ctrue_k\|^2}{\|\ctrue_k\|} 
    \geq    \xione \gamma^2 \|\ctrue_k\|.
  \end{align*}
  Thus, by the conditions of the lemma and \eqref{eq.normal_tr_step}, there exists $\xitwo \in (0,\infty)$ with
  \begin{align*}
    \|\vtrue_k\| \leq \omega \|\Jtrue_k^T\| \|\ctrue_k\| 
    &\leq \xione^{-1} \gamma^{-2} \omega \|\Jtrue_k^T\| (\|\ctrue_k\| - \|\ctrue_k + \Jtrue_k \vtrue_k \|) \\
    &\leq  \xitwo (\|\ctrue_k\| - \|\ctrue_k + \Jtrue_k \vtrue_k\|),
  \end{align*}
  which completes the proof.
\end{proof}

\begin{lemma}\label{lemma.u_bnd}
  Suppose that Assumptions~\ref{assumption.err0}, \ref{assumption.bounded}, and \ref{assumption.s_bounded} hold.  Then, for all $k \in \NN$, one has that $\|\utrue_k \| \leq \sigma_W^{-1} (g_{\sup} + \kappa_W \omega  J_{\sup} (c_{I,\sup} + s_{\sup}) )$.
\end{lemma}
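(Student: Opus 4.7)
The plan is to bound $\|\utrue_k\|$ by exploiting the structure of the noiseless Newton system \eqref{eq.perturbed_newton_sys} and the positive-definiteness of $W_k$ on the null space of $\Jtrue_k$ guaranteed by Assumption~\ref{assumption.bounded}(b). First, I would observe that $\utrue_k := \dtrue_k - \vtrue_k$ lies in $\mathrm{null}(\Jtrue_k)$: the second block equation of \eqref{eq.perturbed_newton_sys} (with noiseless quantities) gives $\Jtrue_k \dtrue_k = \Jtrue_k \vtrue_k$, hence $\Jtrue_k \utrue_k = 0$.

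Next, from the first block of \eqref{eq.perturbed_newton_sys} (with noiseless quantities) together with $\dtrue_k = \utrue_k + \vtrue_k$, we obtain
\begin{equation*}
  W_k \utrue_k + \Jtrue_k^T \ytrue_{k+1} = -\gtrue_k - W_k \vtrue_k.
\end{equation*}
Premultiplying by $\utrue_k^T$ and using $\Jtrue_k \utrue_k = 0$ eliminates the multiplier term, yielding
\begin{equation*}
  \utrue_k^T W_k \utrue_k = -\utrue_k^T \gtrue_k - \utrue_k^T W_k \vtrue_k.
\end{equation*}

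The key step is to apply Assumption~\ref{assumption.bounded}(b), which (since $\utrue_k \in \mathrm{null}(\Jtrue_k)$) gives $\utrue_k^T W_k \utrue_k \geq \sigma_W \|\utrue_k\|^2$. Combining this with Cauchy--Schwarz, submultiplicativity of the 2-norm, and $\|W_k\| \leq \kappa_W$ from Assumption~\ref{assumption.bounded}(c) yields
\begin{equation*}
  \sigma_W \|\utrue_k\|^2 \leq \|\utrue_k\|\,\|\gtrue_k\| + \|\utrue_k\|\,\kappa_W\|\vtrue_k\|.
\end{equation*}
If $\utrue_k = 0$ the claim is immediate; otherwise dividing by $\|\utrue_k\|$ and invoking the bound $\|\gtrue_k\| \leq g_{\sup}$ from \eqref{eq.J_g_bnd} together with $\|\vtrue_k\| \leq \omega J_{\sup}(c_{I,\sup} + s_{\sup})$ from the first part of Lemma~\ref{lemma.v_bnd} gives the desired inequality.

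There is no real obstacle here; the argument is a standard ``multiply by $\utrue_k^T$ and use null-space positive-definiteness'' trick, and every needed ingredient ($\utrue_k \in \mathrm{null}(\Jtrue_k)$, the $\sigma_W$-coercivity of $W_k$ on this null space, the uniform bound $\kappa_W$ on $\|W_k\|$, the uniform bound $g_{\sup}$ on $\|\gtrue_k\|$, and the uniform bound on $\|\vtrue_k\|$ from Lemma~\ref{lemma.v_bnd}) is already in hand. The main thing to be careful about is ensuring that Assumption~\ref{assumption.bounded}(b) is applied to the \emph{noiseless} null space $\mathrm{null}(\Jtrue_k)$, which is precisely why the assumption was stated to cover both $\mathrm{null}(J_k)$ and $\mathrm{null}(\Jtrue_k)$.
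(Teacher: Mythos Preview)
Your proof is correct. The paper takes a slightly different route: it introduces an orthonormal basis $\Ztrue_k$ for $\mathrm{null}(\Jtrue_k)$, writes $\utrue_k = -\Ztrue_k(\Ztrue_k^T W_k \Ztrue_k)^{-1}\Ztrue_k^T(\gtrue_k + W_k\vtrue_k)$, and then bounds $\|\Ztrue_k(\Ztrue_k^T W_k \Ztrue_k)^{-1}\Ztrue_k^T\| \leq \sigma_W^{-1}$ together with $\|\gtrue_k + W_k\vtrue_k\| \leq g_{\sup} + \kappa_W \omega J_{\sup}(c_{I,\sup}+s_{\sup})$. Your argument bypasses the explicit null-space basis by premultiplying the first block equation by $\utrue_k^T$ and invoking the $\sigma_W$-coercivity of $W_k$ on $\mathrm{null}(\Jtrue_k)$ directly; this is more elementary and also avoids the paper's reference to Lemma~\ref{lemma.equiv_u_lambda} (which formally carries Assumption~\ref{assumption.sigma_J}, not assumed here). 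Both routes use the same essential ingredients and arrive at the same bound.
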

\begin{proof}
  Consider arbitrary $k \in \NN$.  Let $\Ztrue_k \in \RR^{(n+q) \times n}$ be a matrix whose columns form an orthonormal basis for $\mathrm{null}(\Jtrue_k)$. As in the proof of Lemma~\ref{lemma.ub_epsku}, one has $\utrue_k = - \Ztrue_k (\Ztrue_k^T \Gammatrue_k \Ztrue_k )^{-1} \Ztrue_k^T (\gtrue_k + W_k \vtrue_k)$.  Thus, under the conditions of the lemma, one finds with $\|\Ztrue_k\| \leq 1$ and Lemmas~\ref{lemma.equiv_u_lambda} and \ref{lemma.v_bnd} that
  \begin{align*}
    \|\utrue_k \| &\leq \| \Ztrue_k (\Ztrue_k^T \Gamma_k \Ztrue_k)^{-1} \Ztrue_k^T \| \| \gtrue_k + W_k \vtrue_k \| \\
    &=  \| \Ztrue_k (\Ztrue_k^T W_k \Ztrue_k)^{-1} \Ztrue_k^T \| \| \gtrue_k + W_k \vtrue_k \| \\
    &\leq \sigma_W^{-1} (g_{\sup} + \kappa_W \omega J_{\sup} (c_{I,\sup} + s_{\sup}) ),
  \end{align*}
  which completes the proof.
\end{proof}

We now show the aforementioned fact that the noiseless merit parameter values can be guaranteed to be bounded below by a positive real number.  The proof of the following lemma is similar to that of Lemma~3.16 in \cite{curtis2010matrix}.

\begin{lemma}\label{lemma.bounded_tau}
  Suppose that Assumptions~\ref{assumption.err0}, \ref{assumption.bounded}, \ref{assumption.s_bounded}, and \ref{assumption.sigma_J} hold.  Then, there exist $k_\tautrue \in \NN$ and $\tautrue \in (0,\infty)$ such that $\tautrue_k = \tautrue$ for all $k \in \NN$ with $k \geq k_{\tautrue}$.
\end{lemma}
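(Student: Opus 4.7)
The plan is to show that the sequence $\{\tautrue_k\}$ can only decrease finitely many times before becoming constant, by exhibiting a positive lower bound on the noiseless trial merit parameter values for all sufficiently large $k$.

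First, I would derive a positive lower bound on $\tautrue_k^{\text{trial}}$ whenever its denominator is positive and $k \geq k_J$. Using the noiseless version of \eqref{eq.perturbed_newton_sys}, the fact that $\utrue_k \in \mathrm{null}(\Jtrue_k)$ gives $\gtrue_k^T \utrue_k = -\utrue_k^T W_k \utrue_k$, so that
\begin{equation*}
  \gtrue_k^T \dtrue_k + \tfrac12 \utrue_k^T W_k \utrue_k = \gtrue_k^T \vtrue_k - \tfrac12 \utrue_k^T W_k \utrue_k \leq \gtrue_k^T \vtrue_k \leq g_{\sup} \|\vtrue_k\|,
\end{equation*}
where the last inequality uses \eqref{eq.J_g_bnd}. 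Combined with Lemma~\ref{lemma.v_bnd}, which ensures $\|\vtrue_k\| \leq \xitwo(\|\ctrue_k\| - \|\ctrue_k + \Jtrue_k \vtrue_k\|)$ for $k \geq k_J$, this yields that, whenever the denominator $\gtrue_k^T \dtrue_k + \tfrac12 \utrue_k^T W_k \utrue_k$ is strictly positive,
\begin{equation*}
  \tautrue_k^{\text{trial}} \geq \frac{(1-\sigma)(\|\ctrue_k\| - \|\ctrue_k + \Jtrue_k \vtrue_k\|)}{g_{\sup} \xitwo (\|\ctrue_k\| - \|\ctrue_k + \Jtrue_k \vtrue_k\|)} = \frac{1-\sigma}{g_{\sup} \xitwo} =: \hat\tau > 0.
\end{equation*}
I would handle the edge case $\|\ctrue_k\| - \|\ctrue_k + \Jtrue_k \vtrue_k\| = 0$ separately: by the noiseless Cauchy decrease condition this forces $\Jtrue_k^T \ctrue_k = 0$ and hence $\vtrue_k = 0$, in which case the denominator equals $-\tfrac12 \utrue_k^T W_k \utrue_k \leq 0$ and the trial value is $\infty$.

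Next, I would read off the update rule \eqref{eq.pi_trial}--\eqref{eq.tau} (applied to noiseless quantities) to conclude two key facts: (i) whenever $\tautrue_k < \tautrue_{k-1}$ the reduction is by at least a factor of $(1-\delta_\tau)$, since in that case $\tautrue_{k-1} > \tautrue_k^{\text{trial}}$ and so $\tautrue_k \leq (1-\delta_\tau)\tautrue_{k-1}$; and (ii) if $\tautrue_{k-1} \leq \hat\tau$ and $k \geq k_J$, then either the denominator is nonpositive (giving $\tautrue_k^{\text{trial}} = \infty$) or $\tautrue_k^{\text{trial}} \geq \hat\tau \geq \tautrue_{k-1}$; either way $\tautrue_k = \tautrue_{k-1}$.

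Combining these, for $k \geq k_J$ the sequence $\{\tautrue_k\}$ is nonincreasing, can only strictly decrease while its value remains above $\hat\tau$, and each such decrease multiplies it by at most $(1-\delta_\tau) < 1$. Since $\tautrue_{k_J-1} > 0$ is finite, only finitely many such reductions are possible before $\tautrue_k \leq \hat\tau$, after which the sequence is constant. Taking $k_\tautrue$ to be the first index beyond which no further reduction occurs and $\tautrue := \tautrue_{k_\tautrue} \in (0,\infty)$ yields the claim.

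The main obstacle is the careful bookkeeping around the denominator of $\tautrue_k^{\text{trial}}$: one must verify that Lemma~\ref{lemma.v_bnd} can be applied precisely when the denominator is positive, and separately dispose of the degenerate case $\Jtrue_k^T\ctrue_k = 0$ in which the Cauchy-decrease-based bound on $\|\vtrue_k\|$ would be vacuous. Once this is done, the rest of the argument is a geometric-decrease counting argument on $\{\tautrue_k\}$ essentially identical to the corresponding step in \cite{curtis2010matrix}.
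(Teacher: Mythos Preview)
Your overall strategy is exactly the paper's: bound the noiseless trial merit parameter from below by a positive constant for all $k \geq k_J$, then use the geometric decrease in \eqref{eq.tau} to conclude that only finitely many reductions are possible. However, the key identity you invoke is incorrect. From the first block of the noiseless version of \eqref{eq.perturbed_newton_sys} one has $W_k\dtrue_k + \Jtrue_k^T\ytrue_{k+1} = -\gtrue_k$, and left-multiplying by $\utrue_k^T$ (using $\Jtrue_k\utrue_k = 0$) gives
\[
\gtrue_k^T\utrue_k = -\utrue_k^T W_k \utrue_k - \utrue_k^T W_k \vtrue_k,
\]
not $\gtrue_k^T\utrue_k = -\utrue_k^T W_k \utrue_k$. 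The cross term $\utrue_k^T W_k \vtrue_k$ does not vanish in general: although $\utrue_k \perp \vtrue_k$ (since $\utrue_k \in \mathrm{null}(\Jtrue_k)$ and $\vtrue_k \in \mathrm{range}(\Jtrue_k^T)$), there is no reason for $W_k$-orthogonality.

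Carrying the cross term along, the correct bound is
\[
\gtrue_k^T\dtrue_k + \tfrac12\utrue_k^T W_k \utrue_k
= \gtrue_k^T\vtrue_k - \tfrac12\utrue_k^T W_k \utrue_k - \utrue_k^T W_k \vtrue_k
\leq (\|\gtrue_k\| + \kappa_W\|\utrue_k\|)\|\vtrue_k\|,
\]
and now one needs a uniform bound on $\|\utrue_k\|$. This is precisely what the paper supplies via Lemma~\ref{lemma.u_bnd}, yielding a constant $\xifive$ with $\gtrue_k^T\dtrue_k + \tfrac12\utrue_k^T W_k \utrue_k \leq \xifive\|\vtrue_k\|$, after which your invocation of Lemma~\ref{lemma.v_bnd} and the remainder of your argument go through unchanged. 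So the fix is localized, but as written the displayed chain $\gtrue_k^T\dtrue_k + \tfrac12\utrue_k^T W_k \utrue_k = \gtrue_k^T\vtrue_k - \tfrac12\utrue_k^T W_k \utrue_k$ is false and the bound $g_{\sup}\|\vtrue_k\|$ does not follow.
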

\begin{proof}
  By construction, $\{\tautrue_k\}$ is monotonically nonincreasing.  Thus, by \eqref{eq.tau}, one can reach the desired conclusion by showing that the corresponding sequence of trial merit parameter values is bounded below.  Consider arbitrary $k \in \NN$ with $k \geq k_J$.  By \eqref{eq.perturbed_newton_sys}, it follows that $W_k \dtrue_k  + \gtrue_k + \Jtrue_k^T \ytrue_{k+1} = 0$.  This, along with $\utrue_k \in \mathrm{null}(\Jtrue_k)$, yields $\utrue_k^T W_k \utrue_k + \utrue_k^T W_k \vtrue_k + \utrue_k^T \gtrue_k = 0$. Thus, along with Lemmas~\ref{lemma.v_bnd} and \ref{lemma.u_bnd}, there exists $\xifive \in (0,\infty)$ such that  
  \begin{align*}
    \gtrue_k^T \dtrue_k + \tfrac{1}{2} \utrue_k^T W_k \utrue_k &= \gtrue_k^T \utrue_k + \gtrue_k^T \vtrue_k + \tfrac{1}{2} \utrue_k^T W_k \utrue_k\\
&\leq    - \utrue_k^T W _k \vtrue_k  + \gtrue_k^T \vtrue_k - \tfrac{1}{2} \utrue_k^T W_k \utrue_k\\
    &\leq  - \tfrac{1}{2} \sigma_W \|\utrue_k\|^2 + \|\gtrue_k\|\|\vtrue_k\|  - \utrue_k^T W_k \vtrue_k  \\
    & \leq  \|\gtrue_k\|\|\vtrue_k\| + \|\utrue_k\| \|W_k\|\|\vtrue_k\| \\
    & \leq  \xifive \|\vtrue_k \| \\
    & \leq  \xifive \xitwo (\|\ctrue_k\| - \|\ctrue_k + \Jtrue_k \vtrue_k\|).
  \end{align*}
  Hence, $\tfrac{(1 - \sigma)(\|\ctrue_k\| - \|\ctrue_k + \Jtrue_k \vtrue_k\|)}{\gtrue_k^T \dtrue_k + \tfrac{1}{2}\utrue_k^T W_k \utrue_k} \geq \tfrac{1 - \sigma}{\xifive \xitwo}$, which, as stated, completes the proof.
\end{proof}

Our next lemma now builds upon the result of Lemma~\ref{lemma.delta_m_1}.

\begin{lemma}\label{lemma.d_bounded_by_delta_m}
  Suppose that Assumptions~\ref{assumption.err0}, \ref{assumption.bounded}, \ref{assumption.s_bounded}, and \ref{assumption.sigma_J} hold. Then, there exists $\xi_d \in (0,\infty)$ such that, for all $k \in \NN$, one finds
  \begin{equation*}
    \xi_d \|\dtrue_k\|^2 \leq \Delta \mtrue_k(\dtrue_k, \tautrue_k).
  \end{equation*}
\end{lemma}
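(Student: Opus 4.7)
The plan is to combine the two inequalities in Lemma~\ref{lemma.delta_m_1} using a uniform positive lower bound on the noiseless merit parameter sequence $\{\tautrue_k\}$ furnished by Lemma~\ref{lemma.bounded_tau}. Under the stated assumptions, the latter lemma guarantees the existence of $\tautrue \in (0,\infty)$ and $k_\tautrue \in \NN$ with $\tautrue_k = \tautrue$ for all $k \geq k_\tautrue$. Since, by construction, $\{\tautrue_k\}$ is monotonically nonincreasing (see \eqref{eq.tau} applied with noiseless quantities), one obtains $\tautrue_k \geq \tautrue > 0$ for every $k \in \NN$.

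Next, I would invoke Lemma~\ref{lemma.delta_m_1} to produce constants $\xithree, \xifour \in (0,\infty)$ such that, simultaneously,
\begin{equation*}
  \Delta \mtrue_k(\dtrue_k, \tautrue_k) \geq \xithree \bigl(\tautrue_k \|\utrue_k\|^2 + \|\Jtrue_k^T \ctrue_k\|^2\bigr)
  \ \ \text{and}\ \
  \|\dtrue_k\|^2 \leq \xifour \bigl(\|\utrue_k\|^2 + \|\Jtrue_k^T \ctrue_k\|^2\bigr).
\end{equation*}
Using $\tautrue_k \geq \tautrue > 0$ to replace the coefficient of $\|\utrue_k\|^2$ in the first inequality, then factoring out $\min\{\tautrue,1\}$ from the resulting bound, yields
\begin{equation*}
  \Delta \mtrue_k(\dtrue_k, \tautrue_k) \geq \xithree \min\{\tautrue,1\}\bigl(\|\utrue_k\|^2 + \|\Jtrue_k^T \ctrue_k\|^2\bigr) \geq \tfrac{\xithree \min\{\tautrue,1\}}{\xifour} \|\dtrue_k\|^2.
\end{equation*}
Setting $\xi_d := \xithree \min\{\tautrue,1\}/\xifour \in (0,\infty)$ gives the stated conclusion for all $k \in \NN$ at once, with no need to split the argument between $k < k_\tautrue$ and $k \geq k_\tautrue$ since monotonicity of $\{\tautrue_k\}$ already supplies the uniform lower bound.

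The argument is essentially a one-line combination of prior lemmas, so there is no substantive obstacle; the only subtlety worth noting is that one must appeal to monotonicity of $\{\tautrue_k\}$ (rather than just its eventual constancy from Lemma~\ref{lemma.bounded_tau}) to ensure the bound holds uniformly for the early iterations and not merely for $k \geq k_\tautrue$.
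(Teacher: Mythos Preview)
Your proposal is correct and follows essentially the same approach as the paper: combine the two inequalities from Lemma~\ref{lemma.delta_m_1} with the lower bound $\tautrue_k \geq \tautrue$ from Lemma~\ref{lemma.bounded_tau}, factor out $\min\{\tautrue,1\}$, and set $\xi_d := \xithree \min\{\tautrue,1\}/\xifour$. Your explicit remark that monotonicity of $\{\tautrue_k\}$ is what extends the bound to all $k \in \NN$ (not just $k \geq k_{\tautrue}$) is a nice clarification that the paper leaves implicit.
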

\begin{proof}
  Consider arbitrary $k \in \NN$.  By Lemmas~\ref{lemma.delta_m_1} and \ref{lemma.bounded_tau}, one finds that
  \begin{align*}
    \Delta \mtrue_k(\dtrue_k, \tautrue_k)
    &\geq \xithree (\tautrue \|\utrue_k\|^2 + \|\Jtrue_k^T \ctrue_k\|^2) \\
    &\geq \xithree \min\{\tautrue,1\} (\|\utrue_k\|^2 + \|\Jtrue_k^T \ctrue_k\|^2 ) \geq \xi_d \|\dtrue_k \|^2,
  \end{align*}
  where $\xi_d := \xithree \xifour^{-1} \min\{\tautrue,1\}$, which completes the proof.
\end{proof}

We are almost prepared to present our main theorem about the behavior of our proposed algorithm.  The theorem considers two situations: one when only Assumptions~\ref{assumption.err0}, \ref{assumption.bounded}, and \ref{assumption.s_bounded} hold, and another when Assumptions~\ref{assumption.sigma_J} and \ref{ass.noise_level} also hold.  For the latter situation, we provide the following corollary.

\begin{corollary}\label{cor.m1_m2_bound}
  Suppose Assumptions~\ref{assumption.err0}, \ref{assumption.bounded}, \ref{assumption.s_bounded}, \ref{assumption.sigma_J}, and \ref{ass.noise_level} hold.  Then, for all $k \in \NN$,
  \begin{equation}\label{eq.m1_m2_bound}
    \calEkmone \leq \emOneOne + \emOneTwo \|\dtrue_k\|\ \ \text{and} \ \ \calEkmtwo \leq \emTwoOne + \emTwoTwo \|\dtrue_k\|,
  \end{equation}
  where $\calEkmone$ and $\calEkmtwo$ are defined in Lemmas~\ref{lemma.m1} and \ref{lemma.m2}, respectively, and
  \begin{align*}
    \emOneOne := (\tau_0 \eps_g + \eps_J) \eps_d + 2 \eps_c,\ \ 
    \emOneTwo &:= \tau_0 \eps_g + \eps_J, \\
    \emTwoOne := (\tau_0 g_{\sup} + J_{\sup} ) \eps_d,\ \ \text{and} \ \  
    \emTwoTwo &:= \tau_0 g_{\sup},
  \end{align*} 
  with $g_{\sup}$ and $J_{\sup}$ defined in \eqref{eq.J_g_bnd}.
\end{corollary}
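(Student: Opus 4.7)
The plan is to obtain both inequalities by direct substitution into the explicit expressions for $\calEkmone$ and $\calEkmtwo$ from Lemmas~\ref{lemma.m1} and \ref{lemma.m2}, using only uniform bounds that have already been established. The only conceptual ingredient I need is that $\tautrue_k+\epsktau$ simplifies and can be bounded in a way independent of~$k$; everything else is bookkeeping.

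First I would note that, by the definition of $\epsktau$ in~\eqref{eq.errors_k}, one has $\tautrue_k+\epsktau=\tau_k$ for every $k\in\NN$. Since the update rule~\eqref{eq.tau} guarantees that the sequence $\{\tau_k\}$ is monotonically nonincreasing, it follows that $\tau_k\leq \tau_0$, and hence $\tautrue_k+\epsktau\leq \tau_0$ for all $k\in\NN$. By the same argument applied to both $\{\tau_k\}$ and $\{\tautrue_k\}$, and the fact that both sequences are nonnegative and bounded above by their common initialization (which is in turn bounded by $\tau_0$ after the first iteration), one obtains $|\epsktau|=|\tau_k-\tautrue_k|\leq \tau_0$ for all $k\in\NN$.

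Next I would plug these two bounds into the expression for $\calEkmone$ given in Lemma~\ref{lemma.m1}, namely
\begin{equation*}
\calEkmone = (\tautrue_k+\epsktau)\eps_g\eps_d + 2\eps_c + \eps_J\eps_d + \bigl((\tautrue_k+\epsktau)\eps_g+\eps_J\bigr)\|\dtrue_k\|,
\end{equation*}
to get $\calEkmone \leq \tau_0\eps_g\eps_d+2\eps_c+\eps_J\eps_d+(\tau_0\eps_g+\eps_J)\|\dtrue_k\|$, and then regroup the constant term as $(\tau_0\eps_g+\eps_J)\eps_d+2\eps_c=\emOneOne$ and identify the coefficient of $\|\dtrue_k\|$ as $\emOneTwo$. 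This yields the first bound in~\eqref{eq.m1_m2_bound}.

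For the second bound, I would substitute the same bound on $\tautrue_k+\epsktau$ and $|\epsktau|$ together with the uniform bounds $\|\gtrue_k\|\leq g_{\sup}$ and $\|\Jtrue_k\|\leq J_{\sup}$ from~\eqref{eq.J_g_bnd} (which hold under Assumptions~\ref{assumption.err0} and~\ref{assumption.s_bounded}) into the expression for $\calEkmtwo$ from Lemma~\ref{lemma.m2}:
\begin{equation*}
\calEkmtwo = (\tautrue_k+\epsktau)\|\gtrue_k\|\eps_d + |\epsktau|\|\gtrue_k\|\|\dtrue_k\| + \|\Jtrue_k\|\eps_d.
\end{equation*}
This gives $\calEkmtwo\leq \tau_0 g_{\sup}\eps_d+\tau_0 g_{\sup}\|\dtrue_k\|+J_{\sup}\eps_d=\emTwoOne+\emTwoTwo\|\dtrue_k\|$, completing the proof.

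Since this is entirely a substitution argument, there is no real obstacle; the only subtle point worth stating explicitly is the identity $\tautrue_k+\epsktau=\tau_k$ and the resulting uniform bound by $\tau_0$, which is what allows the factors depending on $\tautrue_k$ and $\epsktau$ to be replaced by the $k$-independent constants appearing in the definitions of $\emOneOne$, $\emOneTwo$, $\emTwoOne$, and $\emTwoTwo$. Note that Assumptions~\ref{assumption.sigma_J} and~\ref{ass.noise_level} enter only through the availability of $\eps_d$ in Corollary~\ref{lemma.ub_epskd}, not through any further estimate in this argument.
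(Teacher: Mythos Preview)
Your proposal is correct and follows essentially the same approach as the paper's own proof, which is simply the one-line observation that the bounds follow from Lemmas~\ref{lemma.m1} and~\ref{lemma.m2}, \eqref{eq.J_g_bnd}, and the facts $\tautrue_k+\epsktau=\tau_k\leq\tau_0$ and $|\epsktau|\leq\tau_0$. You have merely spelled out the substitutions in full.
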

\begin{proof}
  Under the stated assumptions, the desired conclusions follow from Lemmas~\ref{lemma.m1} and \ref{lemma.m2}, \eqref{eq.J_g_bnd}, and the facts that $\tautrue_k + \epsktau = \tau_k \leq \tau_0$ and $|\epsktau| \leq \tau_0$.
\end{proof}

Our last lemma prior to our main theorem, stated next, shows situations in which the relaxed line search condition is guaranteed to hold.

\begin{lemma}\label{lemma.lb_on_alpha_1}
  Suppose that Assumptions~\ref{assumption.err0}, \ref{assumption.bounded}, and \ref{assumption.s_bounded} hold, and for all $k \in \NN$ with $\alpha_k^{\max}$ computed to satisfy the fraction-to-the-boundary rule, $\epskd$ defined in~\eqref{eq.errors_k}, and $\xizero{\tau_k}$ defined in Lemma~\ref{lemma.dec_phi_delta_m}, let
  \begin{align}\label{eq.hat_alpha_k}
    \alphahat_k = \min \left \{\frac{(1 - \eta_\phi) \Delta \mtrue_k(\dtrue_k, \tautrue_k)}{2 \xizero{\tau_k} \|\dtrue_k + \epskd\|^2}, \alpha_k^{\max} \right\}.
  \end{align}
  Then, with $(\Ekmone, \Ekmtwo)$ from Lemmas~\ref{lemma.m1}--\ref{lemma.m2}, one has for any $k \in \NN$ that
  \begin{align}\label{eq.err_cond_0}
    \eta_\phi \Ekmone - (1 - \eta_\phi) \Ekmtwo \leq \tfrac{1}{2} (1 - \eta_\phi) \Delta \mtrue_k(\dtrue_k, \tautrue_k)
  \end{align}
  implies that \eqref{eq.armijo} holds for all $\alpha \in [0,\alphahat_k]$.  Now suppose that, in addition, Assumptions~\ref{assumption.sigma_J} and \ref{ass.noise_level} hold, and let $k_{\tautrue}$ be defined in Lemma~\ref{lemma.bounded_tau}, $\xi_d$ be defined in Lemma~\ref{lemma.d_bounded_by_delta_m}, $(\emOneOne, \emOneTwo, \emTwoOne, \emTwoTwo)$ be defined in Corollary~\ref{cor.m1_m2_bound}, and
  \begin{align}\label{eq.bar_alpha_k}
    \alphabar_k := \min \left \{ \frac{(1 - \eta_\phi) {\xi_d} \Delta \mtrue_k(\dtrue_k, \tautrue_k)}{ 4 \xizero{\tau_k} (\Delta \mtrue_k(\dtrue_k, \tautrue_k) + \xi_d\eps_d^2) }, \alpha_k^{\max} \right\}\ \ \text{for all}\ \ k \geq k_{\tautrue}.
  \end{align}
  Then, one has for any $k \geq k_{\tautrue}$ that
  \begin{align}
    &\ (\eta_{\phi} \emOneOne + (1 - \eta_{\phi}) \emTwoOne) + \xi_d^{-1/2} (\eta_{\phi} \emOneTwo + (1 - \eta_{\phi}) \emTwoTwo ) {\Delta \mtrue_k(\dtrue_k, \tautrue_k) }^{1/2} \nonumber \\
    \leq&\ \tfrac{1}{2} (1 - \eta_\phi) \Delta \mtrue_k(\dtrue_k, \tautrue_k) \label{eq.err_cond_rho1}
 \end{align}
  implies that \eqref{eq.armijo} holds for all $\alpha \in [0,\alphabar_k]$.
\end{lemma}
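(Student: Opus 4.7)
The plan is to establish~\eqref{eq.armijo} by relating the noisy merit function $\phi$ to its true counterpart $\phitrue$, invoking the descent inequality in Lemma~\ref{lemma.dec_phi_delta_m_2}, and then substituting the decomposition of $\Delta m_k(d_k,\tau_k)$ from Corollary~\ref{cor.m}. The key observation is that $|\phi(z,\tau_k) - \phitrue(z,\tau_k)| \leq \tau_k \eps_f + \eps_c = \vareps_k$ by~\eqref{eq.errors2} and the reverse triangle inequality, so the change in $\phi$ along the step is controlled by the change in $\phitrue$ up to an additive $2\vareps_k$. Combined with the slack $(2+\zeta)\vareps_k$ on the right-hand side of~\eqref{eq.armijo}, this leaves $\zeta\vareps_k \geq 0$ free to discard, yielding a clean ``true quantity'' sufficient condition on $\alpha$.

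For Part~1, I would carry out this reduction, and use Corollary~\ref{cor.m} to rewrite $-\eta_\phi\alpha\Delta m_k(d_k,\tau_k)$ as $-\eta_\phi\alpha\Delta\mtrue_k(\dtrue_k,\tautrue_k) - \eta_\phi\alpha(\Ekmone+\Ekmtwo)$. Rearranging then shows that~\eqref{eq.armijo} is implied by
\begin{equation*}
\xizero{\tau_k}\alpha^2\|\dtrue_k+\epskd\|^2 \leq (1-\eta_\phi)\alpha\Delta\mtrue_k(\dtrue_k,\tautrue_k) - \alpha\bigl(\eta_\phi \Ekmone - (1-\eta_\phi)\Ekmtwo\bigr).
\end{equation*}
Hypothesis~\eqref{eq.err_cond_0} is exactly what is needed to absorb the last summand into one half of $(1-\eta_\phi)\alpha\Delta\mtrue_k(\dtrue_k,\tautrue_k)$; dividing by $\alpha > 0$ and solving for $\alpha$ in the resulting linear-in-$\alpha$ inequality produces the first branch of $\alphahat_k$. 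The cap by $\alpha_k^{\max}$ is imposed so that Lemma~\ref{lemma.dec_phi_delta_m_2} applies, i.e., so that $\alpha d_k^s \geq -\eta_s$ via the fraction-to-the-boundary rule~\eqref{eq.foc}.

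For Part~2, the hypotheses include Assumptions~\ref{assumption.sigma_J}--\ref{ass.noise_level} and $k \geq k_{\tautrue}$, so Lemma~\ref{lemma.d_bounded_by_delta_m} gives $\|\dtrue_k\| \leq \xi_d^{-1/2}\Delta\mtrue_k(\dtrue_k,\tautrue_k)^{1/2}$. Corollary~\ref{cor.m1_m2_bound} then bounds $\eta_\phi|\Ekmone|+(1-\eta_\phi)|\Ekmtwo|$ above by $(\eta_\phi\emOneOne+(1-\eta_\phi)\emTwoOne) + (\eta_\phi\emOneTwo+(1-\eta_\phi)\emTwoTwo)\|\dtrue_k\|$, and applying the $\|\dtrue_k\|$-bound to the second term converts this to the left-hand side of~\eqref{eq.err_cond_rho1}. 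Hence~\eqref{eq.err_cond_rho1} implies~\eqref{eq.err_cond_0}, so Part~1 applies. It remains to verify $\alphabar_k \leq \alphahat_k$: bounding $\|\dtrue_k+\epskd\|^2 \leq 2(\|\dtrue_k\|^2 + \eps_d^2)$ via the triangle inequality and Corollary~\ref{lemma.ub_epskd}, then using $\|\dtrue_k\|^2 \leq \Delta\mtrue_k(\dtrue_k,\tautrue_k)/\xi_d$, converts the first branch of $\alphahat_k$ into exactly the first branch of $\alphabar_k$.

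The main obstacle is purely bookkeeping: tracking the layered substitutions ($\phi \to \phitrue$ via $\vareps_k$, then $\Delta m_k(d_k,\tau_k) \to \Delta\mtrue_k(\dtrue_k,\tautrue_k)$ via $\Ekm$, then $\|\dtrue_k+\epskd\|$ controlled via $\Delta\mtrue_k(\dtrue_k,\tautrue_k)$ and $\eps_d$) so that at each stage the signs of the error terms align with the hypothesis that is meant to absorb them. In particular, one must be careful to \emph{drop} the nonnegative $\zeta\vareps_k$ term on the right-hand side of~\eqref{eq.armijo} rather than try to exploit it, since nothing in the lemma assumes $\vareps_k > 0$, and one must keep $\alpha > 0$ throughout so that dividing the sufficient inequality by $\alpha$ is legitimate.
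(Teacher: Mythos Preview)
Your proposal is correct and follows essentially the same approach as the paper. For Part~1 you reproduce the paper's argument verbatim: pass from $\phi$ to $\phitrue$ at a cost of $2\vareps_k$, invoke Lemma~\ref{lemma.dec_phi_delta_m_2}, split off $-\eta_\phi\alpha\Delta m_k(d_k,\tau_k)$ via Corollary~\ref{cor.m}, absorb the error combination with~\eqref{eq.err_cond_0}, and read off the step-size threshold.

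For Part~2 there is a small structural difference worth noting. The paper restarts from the intermediate inequality~\eqref{eq.tmp_alpha_armijo_0}, replaces the signed errors $\Ekmone,\Ekmtwo$ by their upper bounds $\calEkmone,\calEkmtwo$ (Lemmas~\ref{lemma.m1}--\ref{lemma.m2} and Corollary~\ref{cor.m1_m2_bound}), bounds $\|\dtrue_k+\epskd\|^2\le 2(\|\dtrue_k\|^2+\eps_d^2)$, applies Lemma~\ref{lemma.d_bounded_by_delta_m}, and then invokes~\eqref{eq.err_cond_rho1} and~\eqref{eq.bar_alpha_k} directly. You instead observe that~\eqref{eq.err_cond_rho1} implies~\eqref{eq.err_cond_0} (via $|\Ekmone|\le\calEkmone$, $|\Ekmtwo|\le\calEkmtwo$, Corollary~\ref{cor.m1_m2_bound}, and $\|\dtrue_k\|\le\xi_d^{-1/2}\Delta\mtrue_k(\dtrue_k,\tautrue_k)^{1/2}$), then separately verify $\alphabar_k\le\alphahat_k$ using the same two bounds on $\|\dtrue_k+\epskd\|^2$ and $\|\dtrue_k\|^2$, and appeal to Part~1. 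The ingredients are identical; your packaging is slightly cleaner because it factors Part~2 through Part~1 rather than reproving the inequality chain. One minor inaccuracy: you write that ``nothing in the lemma assumes $\vareps_k>0$,'' but in fact $\vareps_k=\tau_k\eps_f+\eps_c>0$ always under Assumption~\ref{assumption.err0}; this does not affect your argument since dropping $\zeta\vareps_k$ is valid either way.
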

\begin{proof}
  Consider arbitrary $k \in \NN$.  Our first aim is to show that if \eqref{eq.err_cond_0} holds, then \eqref{eq.armijo} holds for all $\alpha \in [0,\alphahat_k]$.  Toward this end, observe that by \eqref{eq.errors2} and Lemma~\ref{lemma.dec_phi_delta_m_2}, one finds for all $\alpha \in (0, \alpha_k^{\max}]$ that
  \begin{align}
    &\ \phi(z_k+\alpha \dhat_k, \tau_k)  - \phi(z_k, \tau_k) \nonumber \\
    \leq&\ \phitrue(z_k+\alpha \dhat_k, \tau_k) -  \phitrue(z_k, \tau_k) + 2 \vareps_k \nonumber \\  
    \leq&\ -\alpha \Delta \mtrue_k(\dtrue_k, \tautrue_k) - \alpha \Ekmtwo + \alpha^2 \xizero{\tau_k} \|\dtrue_k + \epskd\|^2 + 2 \vareps_k \nonumber \\
    =&\ -\alpha \eta_\phi \Delta m_k( d_k, \tau_k) + 2 \vareps_k - \alpha (1-\eta_\phi) \Delta \mtrue_k(\dtrue_k, \tautrue_k) \nonumber \\
    &+ \alpha \eta_\phi \Ekmone - \alpha (1 - \eta_\phi) \Ekmtwo + \alpha^2 \xizero{\tau_k} \|\dtrue_k + \epskd\|^2. \label{eq.tmp_alpha_armijo_0}
  \end{align}
  Hence, along with \eqref{eq.hat_alpha_k} and \eqref{eq.err_cond_0}, one finds that
  \begin{align*}
    &\ \phi(z_k+\alpha \dhat_k, \tau_k) - \phi(z_k, \tau_k) \\
    \leq&\ -\alpha \eta_\phi \Delta m_k( d_k, \tau_k) + 2 \eps_k - \tfrac12 \alpha (1 - \eta_\phi) \Delta \mtrue_k(\dtrue_k, \tautrue_k) + \alpha^2 \xizero{\tau_k} \|\dtrue_k + \epskd\|^2 \\
    \leq&\ -\alpha \eta_\phi \Delta m_k( d_k, \tau_k) + 2 \vareps_k,
  \end{align*}
  which implies that \eqref{eq.armijo} is satisfied, as desired.
  
  Now let us prove the second part of the lemma with the addition of Assumptions~\ref{assumption.sigma_J} and \ref{ass.noise_level}.  Consider arbitrary $k \in \NN$ with $k \geq k_{\tautrue}$.  By \eqref{eq.tmp_alpha_armijo_0}, Lemmas~\ref{lemma.m1} and \ref{lemma.m2}, and Corollary~\ref{cor.m1_m2_bound}, one finds for all $\alpha \in (0, \alpha_k^{\max}]$ that
  \begin{align*}
    &\ \phi(z_k+\alpha \dhat_k, \tau_k)  - \phi(z_k, \tau_k) \\
    \leq&\ -\alpha \eta_\phi \Delta m_k( d_k, \tau_k) + 2 \vareps_k - \alpha (1-\eta_\phi) \Delta \mtrue_k(\dtrue_k, \tautrue_k) \\
    &\ + \alpha \eta_\phi \mathcal{E}_k^{m, 1} + \alpha(1 - \eta_\phi) \mathcal{E}_k^{m, 2} + 2\alpha^2 \xizero{\tau_k} (\|\dtrue_k \|^2+ \eps_d^2) \\
    \leq&\ -\alpha \eta_\phi \Delta m_k( d_k, \tau_k) + 2 \vareps_k - \alpha (1-\eta_\phi) \Delta \mtrue_k(\dtrue_k, \tautrue_k) \\
    &\ + \alpha \eta_\phi (\emOneOne + \emOneTwo \|\dtrue_k\|) \\
    &\ + \alpha(1 - \eta_\phi) (\emTwoOne + \emTwoTwo \|\dtrue_k\|) + 2\alpha^2 \xizero{\tau_k} (\|\dtrue_k \|^2+ \eps_d^2).
  \end{align*}
  Now by Lemmas~\ref{lemma.bounded_tau} and Lemma~\ref{lemma.d_bounded_by_delta_m}, one has 
  \begin{align*}
    &\ \phi(z_k+\alpha \dhat_k, \tau_k)  - \phi(z_k, \tau_k) \\
    \leq&\ -\alpha \eta_\phi \Delta m_k( d_k, \tau_k) + 2 \vareps_k  -\alpha (1-\eta_\phi) \Delta \mtrue_k(\dtrue_k, \tautrue_k)  \\ 
    &\ + \alpha (\eta_{\phi} \emOneOne + (1 - \eta_{\phi}) \emTwoOne ) \\
    &\ + \alpha \xi_d^{-1/2} (\eta_{\phi} \emOneTwo + (1 - \eta_{\phi}) \emTwoTwo ) \Delta \mtrue_k(\dtrue_k, \tautrue_k)^{1/2} \\
    &\ + 2 \alpha^2 \xizero{\tau_k} \xi_d^{-1} \Delta \mtrue_k(\dtrue_k, \tautrue_k) +  2 \alpha^2 \xizero{\tau_k} \eps_d^2. 
  \end{align*}
  Hence, by \eqref{eq.bar_alpha_k} and \eqref{eq.err_cond_rho1}, one finds that
  \begin{align*}
    &\ \phi(z_k + \alpha \dhat_k, \tau_k) - \phi(z_k, \tau_k) \\
    \leq&\ - \alpha \eta_\phi \Delta m_k( d_k, \tau_k) + 2 \vareps_k - \tfrac{1}{2} \alpha  (1 - \eta_\phi) \Delta \mtrue_k(\dtrue_k, \tautrue_k) \\
    &\ + 2 \alpha^2 \xizero{\tau_k} \xi_d^{-1} \Delta \mtrue_k(\dtrue_k, \tautrue_k) +  2 \alpha^2 \xizero{\tau_k} \eps_d^2 \\
    \leq&\ - \alpha \eta_\phi \Delta m_k( d_k, \tau_k) + 2 \vareps_k,
  \end{align*}
  which implies that \eqref{eq.armijo} is satisfied, as desired.
\end{proof}

We are now ready to prove our main theorem.  We following the proof of the theorem with commentary on the theorem's consequences.

\begin{theorem}\label{thm.main1}
  Suppose Assumptions~\ref{assumption.err0}, \ref{assumption.bounded}, and \ref{assumption.s_bounded} hold.  Then, with $\Ekmone$ defined in Lemma~\ref{lemma.m1}, $\Ekmtwo$ defined in Lemma~\ref{lemma.m2}, $\Ekm$ defined in Corollary~\ref{cor.m}, and $\alphahat_k$ defined in Lemma~\ref{lemma.lb_on_alpha_1}, there exists $k \in \NN$ such that
  \begin{equation}\label{eq.stat_cond_0}
    \Delta \mtrue_k(\dtrue_k, \tautrue_k) \leq \max\left\{\frac{2(\eta_\phi \Ekmone - (1 - \eta_\phi) \Ekmtwo)}{(1 - \eta_\phi)}, - 2 \Ekm, \frac{4(4+2\zeta) \vareps_k}{\alphahat_k \eta_\phi }  \right\}.
  \end{equation}
  Now suppose that, in addition, Assumptions~\ref{assumption.sigma_J} and \ref{ass.noise_level} hold, and let $k_{\tautrue}$ be defined in Lemma~\ref{lemma.bounded_tau}, $\xi_d$ be defined in Lemma~\ref{lemma.d_bounded_by_delta_m}, and $(\emOneOne, \emOneTwo, \emTwoOne, \emTwoTwo)$ be defined in Corollary~\ref{cor.m1_m2_bound}.  Then, there exists $k \in \NN$ with $k \geq k_{\tautrue}$ such that
  \begin{equation}\label{eq.stat_cond_0_v3}
    \Delta \mtrue_k(\dtrue_k, \tautrue) \leq \max\{\rho_{k,1}, \rho_{k,2}, \rho_{k,3} \},  
  \end{equation}
  where
  \begin{align*}
    \rho_{k,1} &= 2 \left( \tfrac{\eta_{\phi}}{1 - \eta_{\phi}} \emOneOne +  \emTwoOne + \xi_d^{-1/2} (\tfrac{\eta_{\phi}}{1 - \eta_{\phi}} \emOneTwo + \emTwoTwo )  {\Delta \mtrue_k(\dtrue_k, \tautrue) }^{1/2} \right), \\
    \rho_{k,2} &= 2 \left( \emOneOne + \emTwoOne + \xi_d^{-1/2} (\emOneTwo + \emTwoTwo) \Delta  \mtrue_k(\dtrue_k, \tautrue)^{1/2} \right), \\ \text{and}\ \ 
    \rho_{k,3} &= \tfrac{4(4 + 2\zeta) \vareps_k}{\alphabar_k \eta_\phi} = \tfrac{4(4 + 2\zeta) \vareps_k}{\eta_\phi} \max \left \{ \tfrac{4 \xizero{\tautrue} (\Delta \mtrue_k(\dtrue_k, \tautrue) + \xi_d\eps_d^2)}{(1 - \eta_\phi) \xi_d \Delta \mtrue_k(\dtrue_k, \tautrue)}, \tfrac{1}{\alpha_k^{\max}} \right\}.     
  \end{align*}
\end{theorem}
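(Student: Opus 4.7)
The approach for both parts is proof by contradiction, exploiting Lemma~\ref{lemma.lb_on_alpha_1} to certify the relaxed Armijo condition~\eqref{eq.armijo} at a specific lower bound on the step size, and then converting the resulting descent of the noisy merit function $\phi$ into a uniform descent of the noiseless $\phitrue$ that will be summed to contradict Assumption~\ref{assumption.s_bounded}.

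For part~1, I would suppose for every $k \in \NN$ that the conclusion~\eqref{eq.stat_cond_0} fails, so each of the three terms inside the maximum is strictly smaller than $\Delta \mtrue_k(\dtrue_k, \tautrue_k)$. The three terms play complementary roles. Strict dominance over the first term is precisely condition~\eqref{eq.err_cond_0}, so Lemma~\ref{lemma.lb_on_alpha_1} guarantees that \eqref{eq.armijo} is satisfied for all $\alpha \in [0,\alphahat_k]$; combined with the halving backtracking in line~\ref{step.alpha} and $\alphahat_k \leq \alpha_k^{\max}$, this forces $\alpha_k \geq \alphahat_k/2$. Strict dominance over $-2\Ekm$ yields, via Corollary~\ref{cor.m}, that $\Delta m_k(d_k,\tau_k) > \tfrac{1}{2}\Delta \mtrue_k(\dtrue_k,\tautrue_k)$. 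Strict dominance over the third term then chains these two bounds to $\eta_\phi \alpha_k \Delta m_k(d_k,\tau_k) \geq (\alphahat_k\eta_\phi/4)\Delta \mtrue_k(\dtrue_k,\tautrue_k) > (4+2\zeta)\vareps_k$. Substituting into \eqref{eq.armijo}, invoking Remark~\ref{remark.slackreset} to absorb the slack reset, and applying $|\phi(z,\tau_k) - \phitrue(z,\tau_k)| \leq \tau_k\eps_f + \eps_c = \vareps_k$ deliver the uniform strict decrease $\phitrue(z_{k+1},\tau_k) < \phitrue(z_k,\tau_k) - \zeta\vareps_k$. Since $\vareps_k \geq \eps_c > 0$ and the cumulative effect of merit-parameter changes $\tau_k \to \tau_{k+1}$ on $\phitrue$ telescopes to a finite quantity (the total drop in $\tau_k$ is at most $\tau_0$, and $|\ftrue|$ is controlled on $\{\ftrue < 0\}$ by Assumptions~\ref{assumption.err0} and \ref{assumption.s_bounded}), summing the per-iteration decreases contradicts the lower-boundedness of $\{\phitrue(z_k,\tau_k)\}$ asserted in Assumption~\ref{assumption.s_bounded}.

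Part~2 follows the same template with $\alphabar_k$ replacing $\alphahat_k$: strict dominance over $\rho_{k,1}$ is exactly \eqref{eq.err_cond_rho1}, so Armijo holds on $[0,\alphabar_k]$ and $\alpha_k \geq \alphabar_k/2$; strict dominance over $\rho_{k,2}$, combined with Corollaries~\ref{cor.m} and \ref{cor.m1_m2_bound} and the inequality $\|\dtrue_k\| \leq \xi_d^{-1/2}\Delta\mtrue_k(\dtrue_k,\tautrue)^{1/2}$ from Lemma~\ref{lemma.d_bounded_by_delta_m}, solves the implicit quadratic inequality to give $\Delta m_k > \tfrac{1}{2}\Delta\mtrue_k$; and strict dominance over $\rho_{k,3}$ gives $\alphabar_k\eta_\phi\Delta\mtrue_k > 4(4+2\zeta)\vareps_k$. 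The telescoping step is cleaner here because Lemma~\ref{lemma.bounded_tau} pins $\tautrue_k = \tautrue$ for $k \geq k_{\tautrue}$, so the same uniform $\phitrue(\cdot,\tautrue)$ decrease sums directly against its lower bound.

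The main obstacle, and the reason for the intricate form of the right-hand sides in the theorem, is the bookkeeping that bridges the noisy merit parameter $\tau_k$ used by the algorithm with the noiseless $\tautrue_k$ (or the fixed $\tautrue$ in part~2) appearing in the stationarity measure. Tracking $\epsktau = \tau_k - \tautrue_k$ through Lemmas~\ref{lemma.m1} and \ref{lemma.m2} (and through Corollary~\ref{cor.m1_m2_bound} in part~2) is what forces the specific dependence on $\Ekmone$, $\Ekmtwo$ and on $\emOneOne,\emOneTwo,\emTwoOne,\emTwoTwo$, and the $\zeta$-margin built into \eqref{eq.armijo} is exactly what absorbs the $\pm\vareps_k$ gaps between $\phi$ and $\phitrue$ so that each successful iteration produces the strict decrease needed to drive the contradiction.
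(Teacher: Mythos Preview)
Your proposal is correct and follows the paper's argument essentially line for line: negate the conclusion, use the three terms of the maximum respectively to (i)~trigger Lemma~\ref{lemma.lb_on_alpha_1} and force $\alpha_k\geq\alphahat_k/2$ (resp.\ $\alphabar_k/2$), (ii)~replace $\Delta m_k$ by $\tfrac12\Delta\mtrue_k$ via Corollary~\ref{cor.m} (and Corollary~\ref{cor.m1_m2_bound} with Lemma~\ref{lemma.d_bounded_by_delta_m} in part~2), and (iii)~convert the Armijo decrease into a uniform $\zeta\vareps_k$ drop in $\phitrue$, contradicting Assumption~\ref{assumption.s_bounded}; you are in fact more explicit than the paper about the telescoping needed to absorb the shift $\tau_k\to\tau_{k+1}$. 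One small slip: in part~2 the telescoping is \emph{not} cleaner on account of $\tautrue_k=\tautrue$, because the decrease inequality $\phitrue(z_{k+1},\tau_k)<\phitrue(z_k,\tau_k)-\zeta\vareps_k$ still involves the \emph{noisy} $\tau_k$ (Lemma~\ref{lemma.bounded_tau} pins only the noiseless sequence), so the same bookkeeping from part~1 is required there as well.
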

\begin{proof}
  To derive a contradiction to the first desired conclusion, suppose~\eqref{eq.stat_cond_0} does not hold for all $k \in \NN$.  Under this supposition, consider arbitrary $k \in \NN$.  Since \eqref{eq.stat_cond_0} does not hold, it follows that \eqref{eq.err_cond_0} holds, which in turn means by Lemma~\ref{lemma.lb_on_alpha_1} that \eqref{eq.armijo} holds for some step size $\alpha \geq \hat\alpha_k/2$.  Thus, with Corollary~\ref{cor.m},
  \begin{align}
    &\ \phitrue(z_k + \alpha \dhat_k, \tau_k) -  \phitrue(z_k, \tau_k) \nonumber \\
    \leq&\ \phi(z_k + \alpha \dhat_k, \tau_k) - \phi(z_k, \tau_k) + 2 \vareps_k \nonumber \\
    \leq&\ -\tfrac{1}{2} \alphahat_k \eta_\phi \Delta m_k(d_k, \tau_k) + 2 \vareps_k + (2 + \zeta) \vareps_k \nonumber \\
    =&\ -\tfrac{1}{2} \alphahat_k \eta_\phi \Delta \mtrue_k(\dtrue_k, \tautrue_k) - \tfrac{1}{2} \alphahat_k \eta_\phi \Ekm + (4 + \zeta) \vareps_k. \label{eq.tmp_thm_delta_m_0}
  \end{align}
  This, again with the fact that \eqref{eq.stat_cond_0} does not hold, implies
  \begin{equation*}
    \phitrue(z_k + \alpha \dhat_k, \tau_k) -  \phitrue(z_k, \tau_k) \leq - \tfrac14 \hat\alpha_k \eta_\phi \Delta  \mtrue_k(\dtrue_k, \tautrue_k) + (4 + \zeta) \vareps_k \leq -\zeta \vareps_k.
  \end{equation*}
  However, with Remark~\ref{remark.slackreset} this implies that $\phitrue(z_{k+1}, \tau_k) -  \phitrue(z_k, \tau_k) \leq - \zeta \vareps_k$, which contradicts Assumption~\ref{assumption.s_bounded}, specifically the fact that $\{\phitrue(z_k,\tau_k)\}$ is bounded below.  Consequently, \eqref{eq.stat_cond_0} must hold for some $k \in \NN$, as desired.

  Now consider the second desired conclusion under the addition of Assumptions~\ref{assumption.sigma_J} and \ref{ass.noise_level}.  To derive a contradiction, suppose that \eqref{eq.stat_cond_0_v3} does not hold for all $k \in \NN$ with $k \geq k_{\tautrue}$.  Under this supposition, consider arbitrary such $k$.  Since \eqref{eq.stat_cond_0_v3} does not hold, it follows that \eqref{eq.err_cond_rho1} holds, which in turn means by Lemma~\ref{lemma.lb_on_alpha_1} that \eqref{eq.armijo} holds for some step size $\alpha \geq \bar\alpha_k/2$.  Thus, by a similar derivation that led to \eqref{eq.tmp_thm_delta_m_0}, Corollary~\ref{cor.m}, Corollary~\ref{cor.m1_m2_bound}, and Lemma~\ref{lemma.d_bounded_by_delta_m},
  \begin{align*}
    &\ \phitrue(z_k + \alpha \dhat_k, \tau_k) -  \phitrue(z_k, \tau_k) \\
    \leq&\ -\tfrac{1}{2} \alphabar_k \eta_\phi \Delta \mtrue_k(\dtrue_k, \tautrue_k) + \tfrac{1}{2} \alphabar_k \eta_\phi \calEkm + (4 + \zeta) \vareps_k \\
    \leq&\ -\tfrac{1}{2} \alphabar_k \eta_\phi \Delta \mtrue_k(\dtrue_k, \tautrue_k) + (4 + \zeta) \vareps_k \\
    &\ + \tfrac{1}{2} \alphabar_k \eta_\phi (\emOneOne + \emTwoOne + (\emOneTwo + \emTwoTwo) \|\dtrue_k\| ) \\
    \leq&\ -\tfrac{1}{2} \alphabar_k \eta_\phi \Delta \mtrue_k(\dtrue_k, \tautrue_k) + (4 + \zeta) \vareps_k \\
    &\ + \tfrac{1}{2} \alphabar_k \eta_\phi (\emOneOne + \emTwoOne + \xi_d^{-1/2} (\emOneTwo + \emTwoTwo) \Delta  \mtrue_k(\dtrue_k, \tautrue_k)^{1/2} ).
  \end{align*}
  This, again with the fact that \eqref{eq.stat_cond_0_v3} does not hold, implies
  \begin{equation*}
    \phitrue(z_k + \alpha \dhat_k, \tau_k) -  \phitrue(z_k, \tau_k) \leq - \tfrac14 \alphabar_k \eta_\phi \Delta  \mtrue_k(\dtrue_k, \tautrue_k) + (4 + \zeta) \vareps_k \leq -\zeta \vareps_k
  \end{equation*}
  However, with Remark~\ref{remark.slackreset} this implies that $\phitrue(z_{k+1}, \tau_k) - \phitrue(z_k, \tau_k) < - \zeta \vareps_k$, which contradicts Assumption~\ref{assumption.s_bounded}, specifically the fact that $\{\phitrue(z_k,\tau_k)\}$ is bounded below.  Consequently, \eqref{eq.stat_cond_0_v3} must hold for some $k \in \NN$, as desired.
\end{proof}

The following observations explain the strong consequences of Theorem~\ref{thm.main1}.

\begin{itemize}
  \item The first conclusion of the theorem, expressed in~\eqref{eq.stat_cond_0}, shows a preliminary upper bound on the stationarity measure $\Delta \mtrue(\dtrue_k, \tautrue_k)$ in terms of error quantities in the given iteration $k \in \NN$.  If $\Ekmone$, $\Ekmtwo$, and $\Ekm$ are each relatively small and the step size threshold $\alphahat_k$ (recall \eqref{eq.hat_alpha_k}) is not exceedingly small, then \eqref{eq.stat_cond_0} describes that the algorithm has reached an iteration in which $\Delta \mtrue(\dtrue_k, \tautrue_k)$ is small, as is desirable; recall \S\ref{sec.discussion}.  One should of course note that $\alphahat_k$ itself depends on $\Delta \mtrue(\dtrue_k, \tautrue_k)$, $\|\dtrue_k + \epskd\|^2$, and $\alpha_k^{\max}$, and in particular $\alphahat_k$ not being exceedingly small means that $\|\dtrue_k + \epskd\|^2$ is not exceedingly large relative to $\Delta \mtrue(\dtrue_k, \tautrue_k)$ and that $\alpha_k^{\max}$ is not exceedingly small. It is not straightforward to guarantee that such a situation occurs without considering the overall behavior of the algorithm over the entire sequence of iterations, rather than in the particular iteration $k \in \NN$ in which \eqref{eq.stat_cond_0} holds.  Therefore, let us turn to the second desired conclusion wherein we have effectively shown an iteration-independent bound on the threshold reached by the stationary measure $\Delta \mtrue(\dtrue_k, \tautrue_k)$.
  \item In the second conclusion of the theorem under the additional Assumptions~\ref{assumption.sigma_J} and \ref{ass.noise_level}, the bound $\Delta \mtrue(\dtrue_k, \tautrue_k) \leq \max\{\rho_{k,1}, \rho_{k,2}\}$ would imply that
  \begin{equation*}
    \Delta \mtrue(\dtrue_k, \tautrue_k) \leq c_1 + c_2 \Delta \mtrue(\dtrue_k, \tautrue_k)^{1/2}
  \end{equation*}
  for some iteration-independent constants $c_1 \in (0,\infty)$ and $c_2 \in (0,\infty)$, which in turn would bound $\Delta \mtrue(\dtrue_k, \tautrue_k)^{1/2}$ in terms of these constants.  Recalling Corollary~\ref{cor.m1_m2_bound}, one finds that these constants would all vanish if the noise bounds were to vanish, except for the term derived from $\emTwoTwo := \tau_0 g_{\sup}$, which depends on the initial merit parameter value.  This directs us back to Remark~\ref{rem.ugly_but_it_is_what_it_is}, where we first observed that poor behavior can be exhibited by the algorithm if the noisy and noiseless merit parameter sequences diverge substantially.  Specifically, the worst case is when the noisy merit parameter values does not decrease sufficiently, in which case the algorithm does not put enough weight on reducing constraint violation.  One way in which this poor behavior can be mitigated is by choosing $\tau_0$ to a conservative, small value, although this might not always yield the best performance in real-world practice.
  \item In the second conclusion, the bound $\Delta \mtrue(\dtrue_k, \tautrue_k) \leq \rho_{k,3}$ would imply that
  \begin{equation*}
    \Delta \mtrue(\dtrue_k, \tautrue_k) \leq \frac{4(4 + 2\zeta) \vareps_k}{\eta_\phi} \max \left \{ \frac{4 \xizero{\tautrue} (\Delta \mtrue_k(\dtrue_k, \tautrue) + \xi_d\eps_d^2)}{(1 - \eta_\phi) \xi_d \Delta \mtrue_k(\dtrue_k, \tautrue)}, \frac{1}{\alpha_k^{\max}} \right\}.
  \end{equation*}
  Recall that by Corollary~\ref{lemma.ub_epskd} it follows that $\|d_k - \dtrue_k\| \leq \eps_d$.  One can also show under our assumptions that $\{d_k\}$ is bounded in norm; see, e.g., Lemma 3.7 in~\cite{CurtScheWaec10}.  Consequently, following Lemma 3.11 in \cite{CurtScheWaec10}, one can show that there exists a constant $d_{\sup}^s$ such that $(\alpha_k^{\max})^{-1} \leq \eta_s^{-1} \|d_k^s\| \leq \eta_s^{-1} d_{\sup}^s$ for all $k \in \NN$.  In fact, this is quite a pessimistic bound, since in fact one should expect $\|d_k^s\|$ to be small at points at which $\Delta \mtrue(\dtrue_k, \tautrue_k)$ is small.  In any case, overall one can essentially conclude that $\Delta \mtrue(\dtrue_k, \tautrue_k) \leq \rho_{k,3}$ implies that $\Delta \mtrue(\dtrue_k, \tautrue_k)$ is bounded by constants that vanish with the noise.
\end{itemize}

\section{Numerical Experiments}\label{sec.numerical_exp}

The purpose of our numerical experiments is to demonstrate the practical performance of our proposed algorithm on a large and diverse set of test problems.  Our aim is to show that, in practice, our algorithm is able to drive stationarity measures below levels that are proportional to the noise in the function and derivative values.  Similarly as for our theoretical results in the preceding section, our experiments consider runs with a fixed barrier parameter.  We discuss a proposed heuristic for reducing the barrier parameter in Section~\ref{sec.conclusion_4}.

We implemented our algorithm in Matlab.  We chose, in the following manner, test problems from CUTE (Constrained and Unconstrained Testing Environment) \cite{bongartz1995cute} for which AMPL models \cite{gay1997hooking} were available.\footnote{\href{https://vanderbei.princeton.edu/ampl/nlmodels/index.html}{https://vanderbei.princeton.edu/ampl/nlmodels/index.html}}  First, of all available CUTE/AMPL problems, we restricted attention to those involving at least one (potentially nonlinear) constraint given by CUTE/AMPL in their standard form \texttt{l <= c(x) <= u}.  (Problems with only bound constraints on an individual variable or variables were ignored.)  Second, of these problems, we restricted attention to those with \texttt{l < u}, componentwise.  Third, we removed from consideration all problems with more than 100 primal or dual variables.  This resulted in a set of 137 test problems for our numerical experiments.

The following initialization strategy and algorithmic parameters were fixed for all runs.  (Values not mentioned here were chosen differently for different experiments, as discussed later on.)  For the initial primal iterate $x_0$, we took the initial point offered by CUTE/AMPL and projected it inside any present bound constraints using the strategy proposed in Section~3.6 of~\cite{wachter2006implementation}.  The initial slack variable $s_0$ was set to a vector of all ones, then a slack reset was performed; recall \eqref{eq.slack_reset}. The initial merit parameter was set to $\tau_{-1} \gets 10^{-1}$, the normal-step trust-region-radius parameter was set to $\omega \gets 10^3$, the merit parameter update parameters were set to $\sigma \gets 10^{-1}$ and $\delta_{\tau} \gets 10^{-4}$, and the relaxed Armijo line-search parameters were set to $\eta_{\phi} \gets 10^{-8}$ and $\zeta \gets 10^{-1}$.

Important implementation details were as follows.  First, as is typical for software for solving continuous optimization problems, each problem was scaled before our algorithm was applied.  Specifically, if the noiseless gradient of the objective function evaluated at the initial point had an $\ell_\infty$ norm greater than~10, then the objective function was scaled by $10/\|\gtrue_0\|_\infty$.  Similarly, if the noiseless gradient of any constraint function evaluated at the initial point had an $\ell_\infty$ norm greater than 10, then the constraint function was scaled similarly.  All of our subsequent numerical results refer to solving the resulting scaled problems.  Second, during the run of our proposed algorithm to compute the normal step $v_k$ for all $k \in \NN$, our code employs an implementation of the well-known Mor\'e-Sorensen method~\cite{more1983computing} for solving the trust-region subproblem~\eqref{eq.normal_tr_step}.  Third, for all $k \in \NN$, to compute the tangential step $u_k$, we initialize $W_k$ as in \eqref{eq.W} with a noisy Hessian value (see next paragraph) and $\Sigma_k \gets S_kY_k$.  Then, a Hessian modification subroutine is performed to ensure that it is sufficiently positive definite over $\mathrm{null}(J_k)$.  This is done by using Matlab's built-in \texttt{ldl} method to compute a block LDL factorization of the matrix in \eqref{eq.perturbed_newton_sys}, then checking its inertia \cite{nocedal1999numerical}.  If the block diagonal matrix in this factorization has $n$ positive eigenvalues with value at least $10^{-10}$, then no modification is performed.  Otherwise, sequentially larger multiples of the identity matrix are added to $W_k$ until the block diagonal matrix in its LDL factorization has $n$ positive eigenvalues with value at least $10^{-10}$.  Once the Hessian modification strategy is completed, the tangential step $u_k$ and multipliers $y_{k+1}$ are computed using Matlab's built-in \texttt{mldivide} routine for solving linear systems of equations.

We present the results of experiments with two barrier parameter values and two noise levels.  For the barrier parameter, we considered $\mu \gets 10^{-1}$ and $\mu \gets 10^{-4}$.  In each case, the initial dual variable $y_0$ was set to $\mu S_0^{-1} e$ and the fraction-to-the-boundary parameter was set to $\eta_s \gets \max \{0.99, 1 - \mu \}$.  For the noise levels, we considered $\eps_f = \eps_c \gets 10^{-2}$ and $\eps_f = \eps_c \gets 10^{-6}$.  In each case, we chose $\eps_g = \eps_J = \eps_H \gets \sqrt{\eps_f} = \sqrt{\eps_c}$.  Given these constants, noise was added to each CUTE/AMPL function and derivative value for each $k \in \NN$ as follows.  For the objective and constraint functions, we set $f_0(x_k) \gets \fbar_0(x_k) + \vareps_{f_0,k}$ and $c_I(x_k) \gets \ctrue_I(x_k) + \vareps_{c_I,k}$, where $\vareps_{f_0,k}$ was drawn from a uniform distribution over $[-\eps_f,\eps_f]$ and $\vareps_{c_I,k}$ was drawn from a uniform distribution over an $\ell_2$-norm ball with radius $\eps_c$.  For the objective and constraint first-order derivatives, we set $g_0(x_k) \gets \gtrue_0(x_k) + \vareps_{g_0,k}$ and $J_I(x_k) \gets \Jtrue(x_k) + \vareps_{J_I,k}$, where $\vareps_{g_0,k}$ was drawn from a uniform distribution over an $\ell_2$-norm ball with radius $\eps_g$, $\vareps_{J_I,k}$ had the same sparsity pattern as $\Jtrue(x_k)$, and the nonzero components of each row of $\vareps_{J_I,k}$ were drawn from a uniform distribution over an $\ell_2$-norm ball with radius $\eps_J/\sqrt{q}$.  Finally, with $\Htrue_k$ defined as the Hessian of the Lagrangian evaluated at $(x_k,y_k)$, we set $H_k \gets \Htrue_k + \vareps_{H,k}$, where $\vareps_{H,k}$ is a diagonal matrix with each diagonal element drawn from a uniform distribution over $[-\eps_H,\eps_H]$.

For each test problem, barrier parameter, and noise level, we ran our code until 2000 iterations were conducted or a time limit of 1 hour was reached.  In actual practice, it would be more reasonable to run Algorithm~\ref{alg.exact_alg} with termination conditions corresponding to those stated in the algorithm itself, i.e., for prescribed thresholds, the algorithm may terminate when---according to noisy quantities---an approximate infeasible stationary point or approximate first-order stationary point has been reached.  However, for our purposes here, we chose to disable these termination conditions in order to be confident that the algorithm has reached the highest quality solution that it can in each run.

In Figure~\ref{fig:mu_1e-1_noisy}, we show the quality of the solutions found with $\mu = 10^{-1}$ in terms of \emph{noisy} stationarity measures, both with respect to $\eps_f = 10^{-2}$ and $\eps_f = 10^{-6}$.  (Recall from above that the value for $\eps_f$ determines all of the values for $\eps_c$, $\eps_g$, $\eps_J$, and $\eps_H$ as well.)  Specifically, for varying thresholds on the stationarity measure for solving the barrier subproblem (left) and a stationarity measure for minimizing infeasibility (right), we show the percentages of problems for which the geometric average of the last 10 iterates satisfied the given threshold for each measure.  Figure~\ref{fig:mu_1e-1_det} provides the results for these same runs of the algorithm (using noisy quantities), but considers values of the measures with respect to \emph{noiseless} function and derivative values.  These are included to show that the algorithm is reaching points that can be considered nearly stationary for noiseless values, even though it only has access to noisy ones.

\begin{figure}[ht]
\centering
\begin{subfigure}{.5\textwidth}
  \centering
  \includegraphics[width=\linewidth]{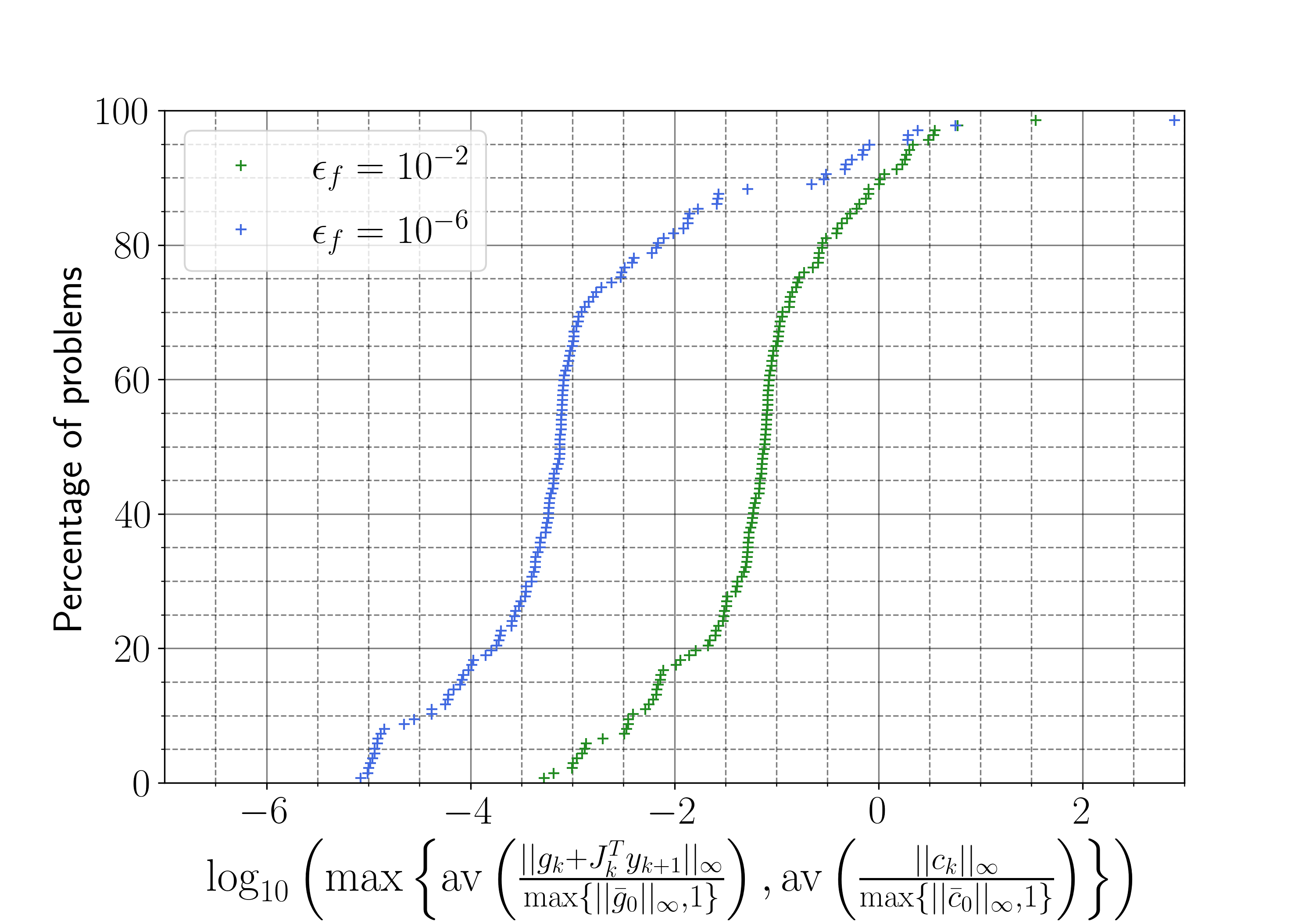}
  \caption{Stationarity w.r.t.~barrier subproblem}
  \label{fig:opt_mu_1e-1_noisy}
\end{subfigure}%
\begin{subfigure}{.5\textwidth}
  \centering
  \includegraphics[width=\linewidth]{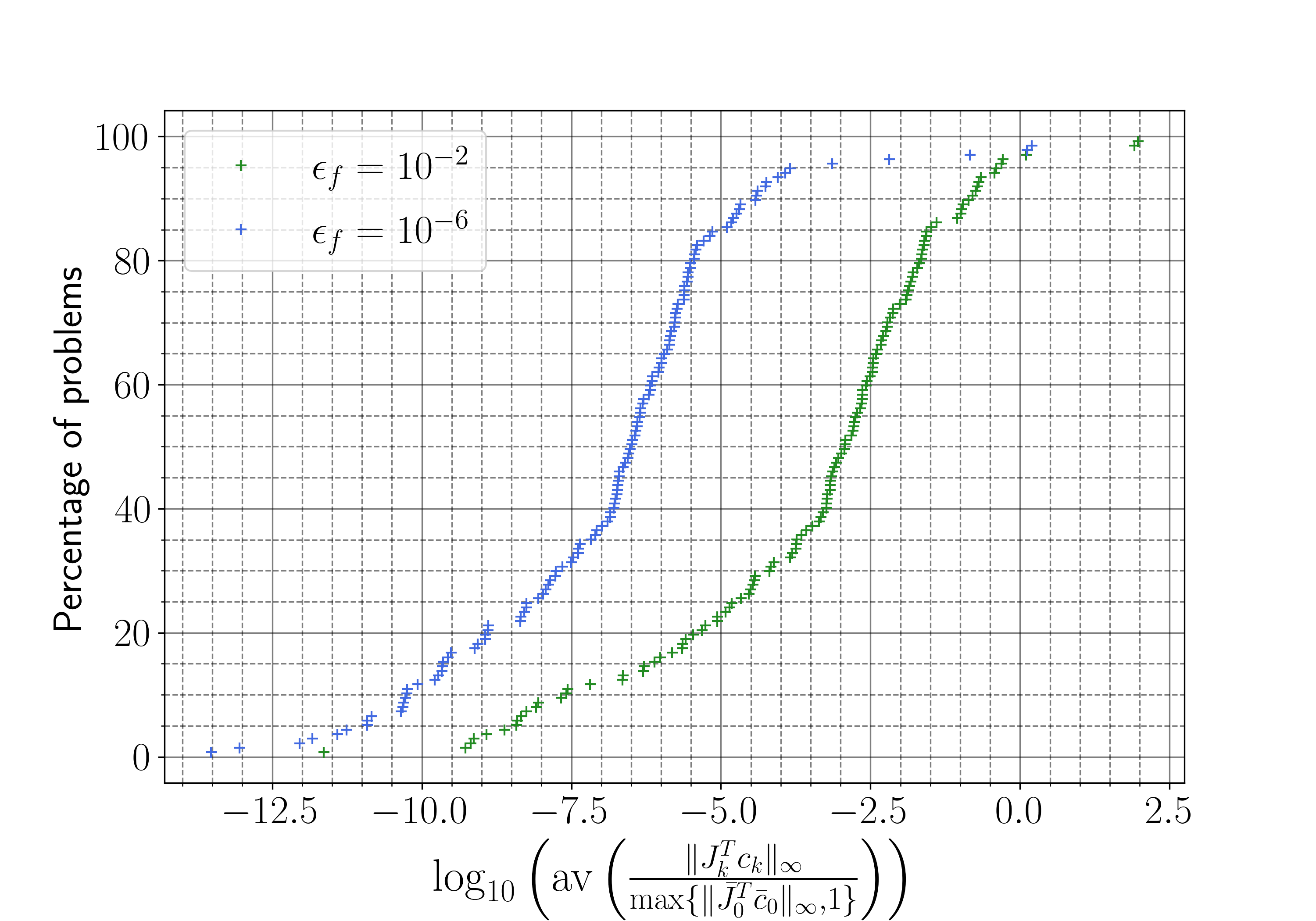}
  \caption{Stationarity w.r.t.~minimizing infeasibility}
  \label{fig:infeas_mu_1e-1_noisy}
\end{subfigure}
\caption{$\mu=10^{-1}$, measured with noisy function and derivative values}
\label{fig:mu_1e-1_noisy}
\end{figure} 

\begin{figure}[ht]
\centering
\begin{subfigure}{.5\textwidth}
  \centering
  \includegraphics[width=\linewidth]{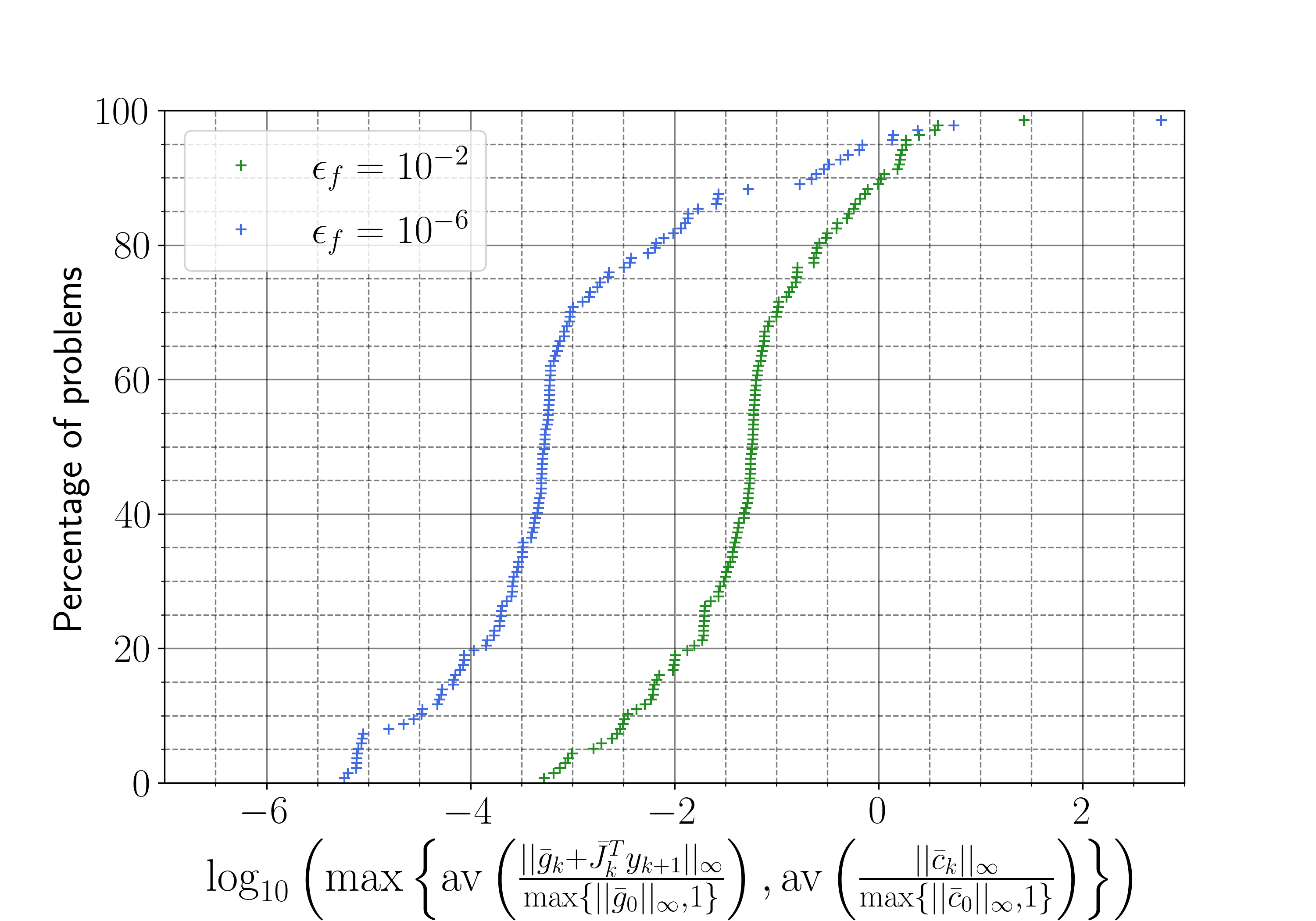}
  \caption{Stationarity w.r.t.~barrier subproblem}
  \label{fig:opt_mu_1e-1_det}
\end{subfigure}%
\begin{subfigure}{.5\textwidth}
  \centering
  \includegraphics[width=\linewidth]{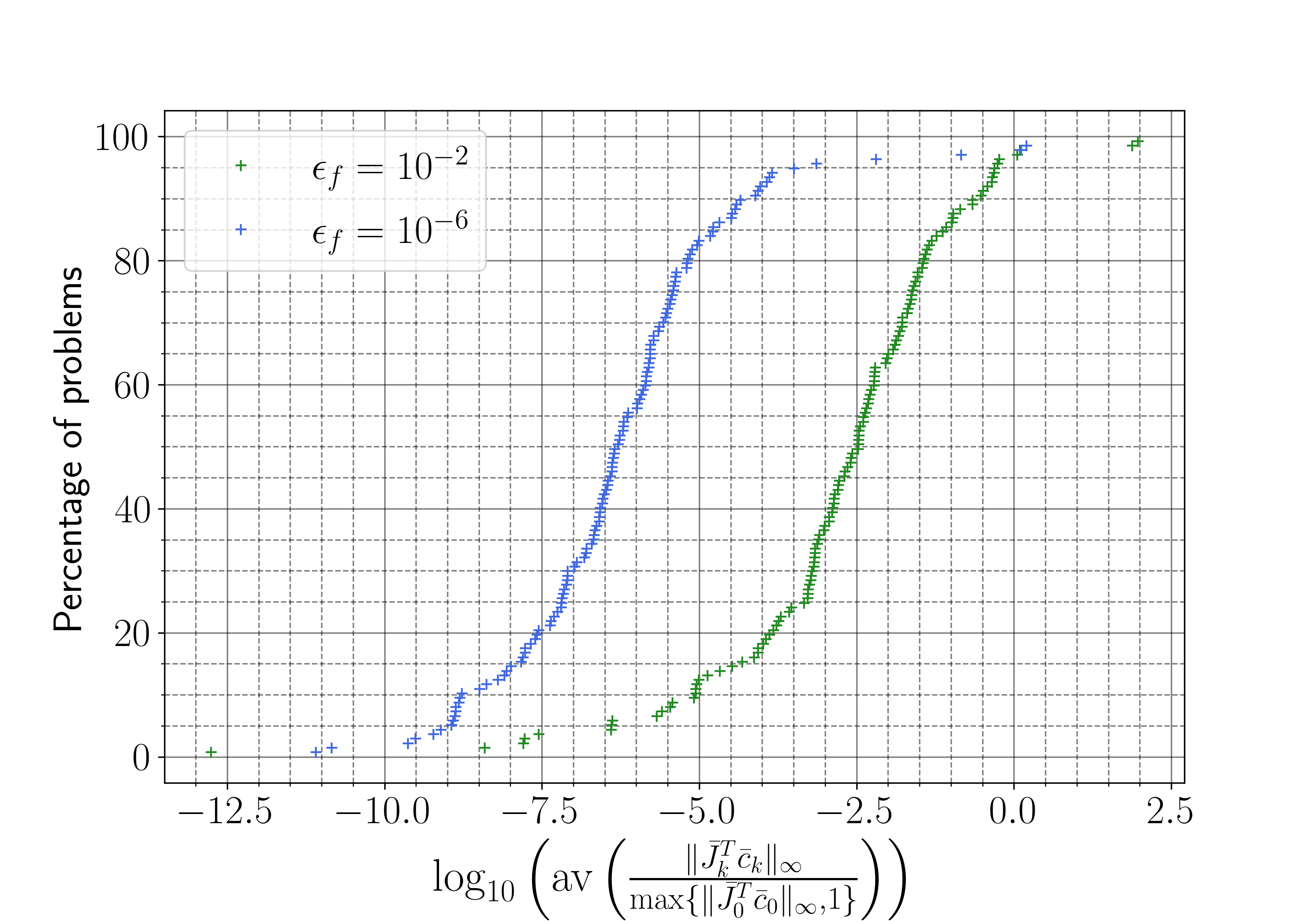}
  \caption{Stationarity w.r.t.~minimizing infeasibility}
  \label{fig:infeas_mu_1e-1_det}
\end{subfigure}
\caption{$\mu=10^{-1}$, measured with noiseless function and derivative values}
\label{fig:mu_1e-1_det}
\end{figure}

Figures~\ref{fig:mu_1e-4_noisy} and \ref{fig:mu_1e-4_det}, respectively, show results in terms of \emph{noisy} and \emph{noiseless} stationarity measures, this time when the barrier parameter is $\mu = 10^{-4}$.

\begin{figure}[ht]
\centering
\begin{subfigure}{.5\textwidth}
  \centering
  \includegraphics[width=\linewidth]{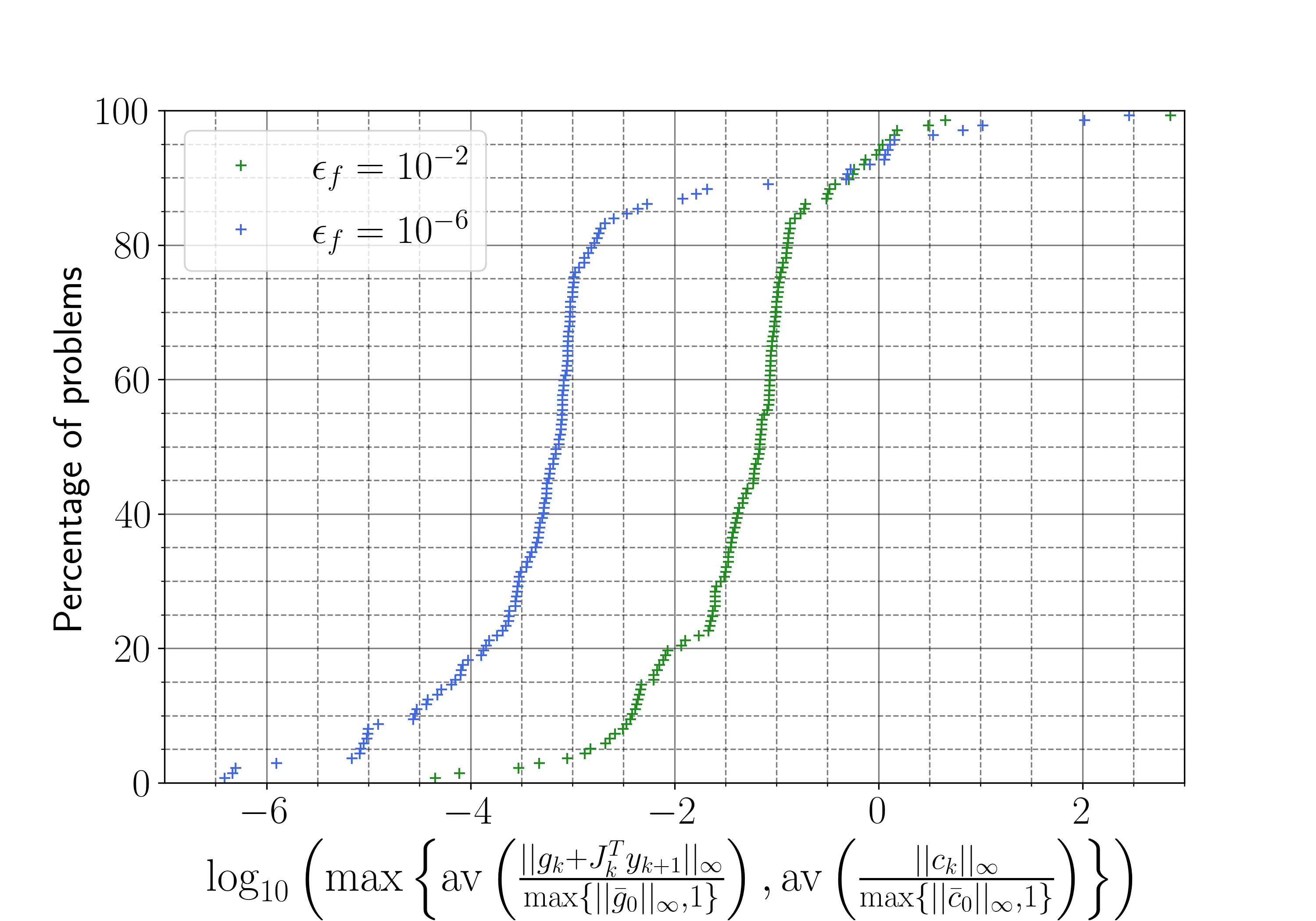}
  \caption{Stationarity w.r.t.~barrier subproblem}
  \label{fig:opt_mu_1e-4_noisy}
\end{subfigure}%
\begin{subfigure}{.5\textwidth}
  \centering
  \includegraphics[width=\linewidth]{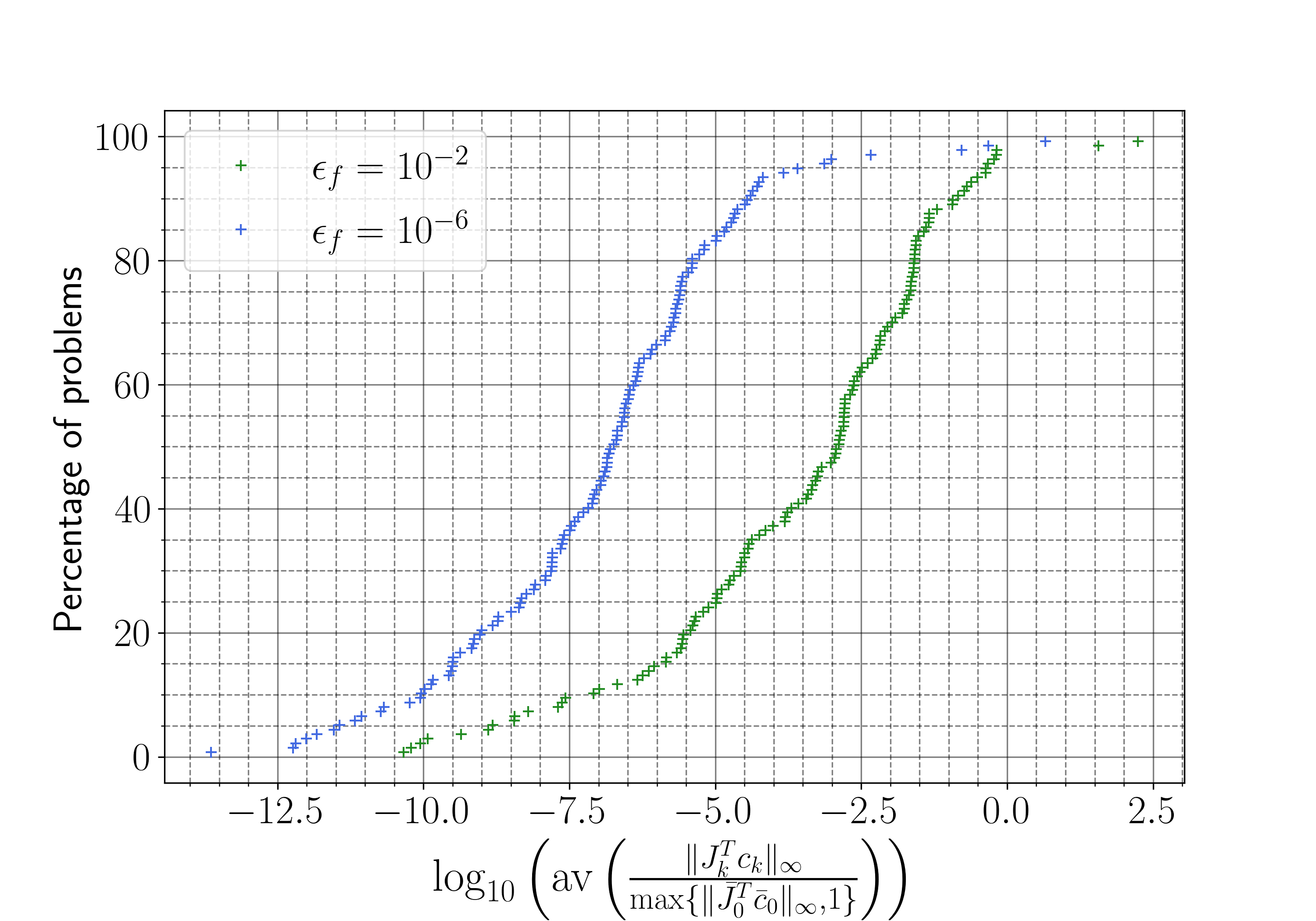}
  \caption{Stationarity w.r.t.~minimizing infeasibility}
  \label{fig:infeas_mu_1e-4_noisy}
\end{subfigure}
\caption{$\mu=10^{-4}$, measured with noisy function and derivative values}
\label{fig:mu_1e-4_noisy}
\end{figure} 

\begin{figure}[ht]
\centering
\begin{subfigure}{.5\textwidth}
  \centering
  \includegraphics[width=\linewidth]{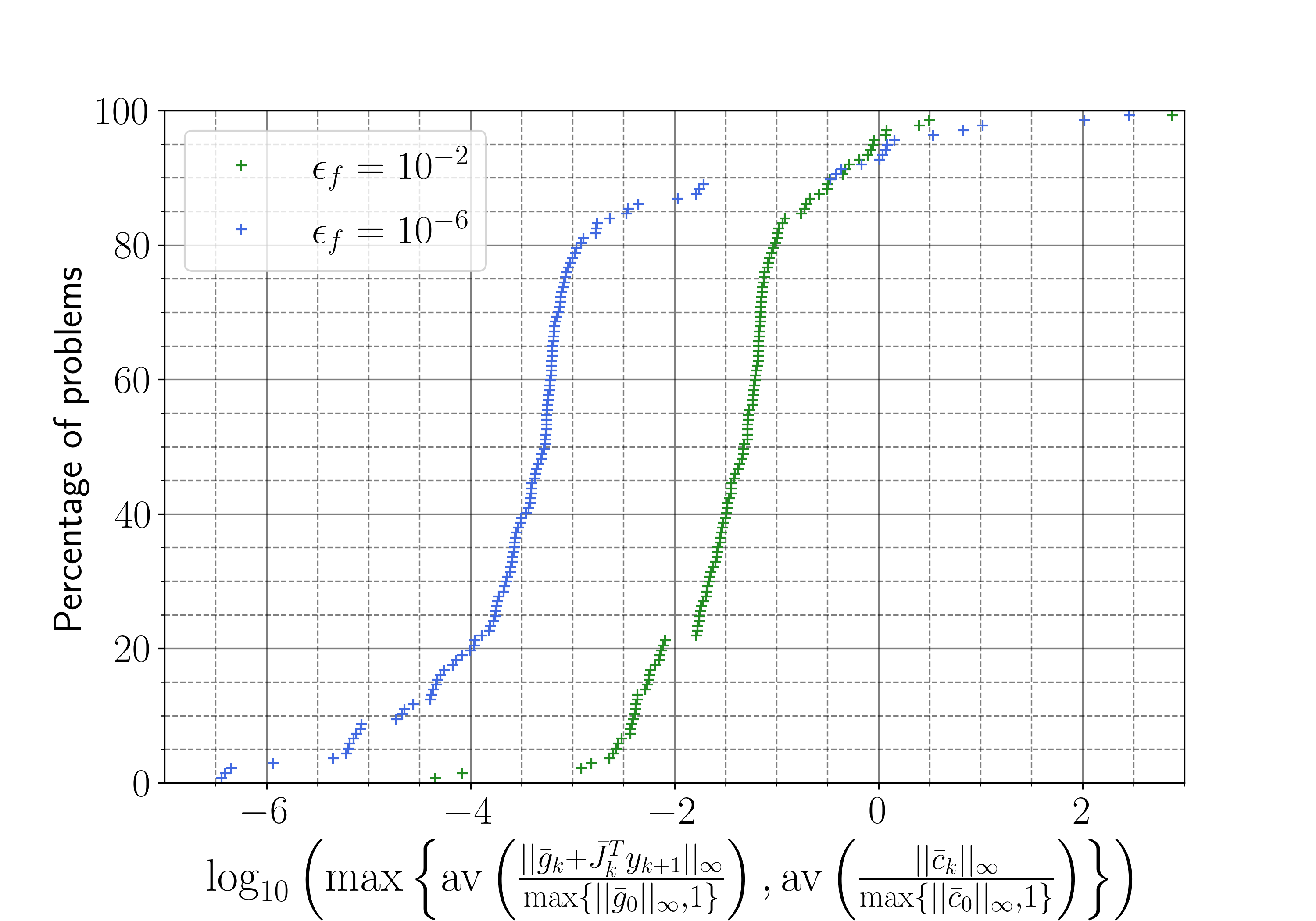}
  \caption{Stationarity w.r.t.~barrier subproblem}
  \label{fig:opt_mu_1e-4_det}
\end{subfigure}%
\begin{subfigure}{.5\textwidth}
  \centering
  \includegraphics[width=\linewidth]{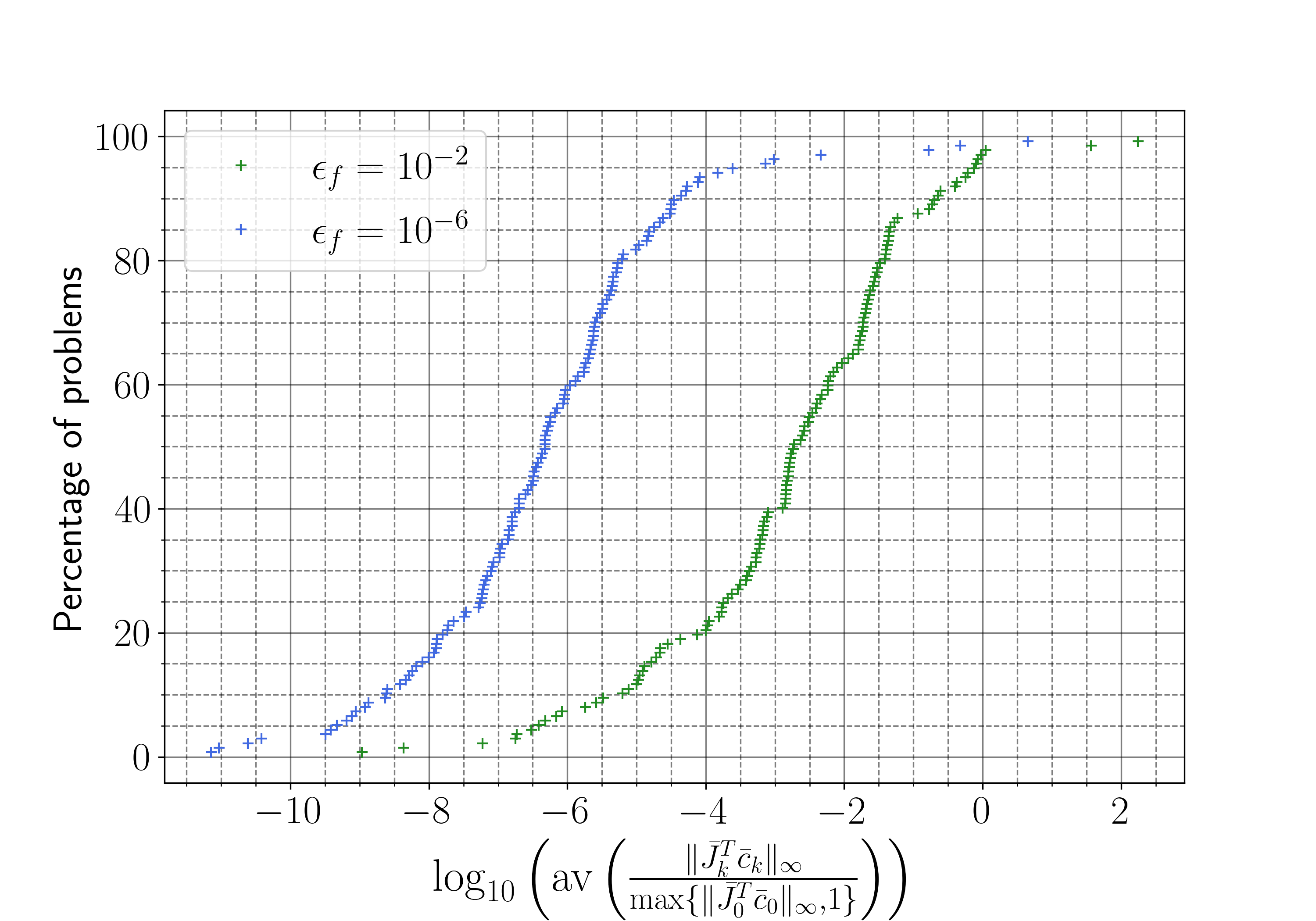}
  \caption{Stationarity w.r.t.~minimizing infeasibility}
  \label{fig:infeas_mu_1e-4_det}
\end{subfigure}
\caption{$\mu=10^{-4}$, measured with noiseless function and derivative values}
\label{fig:mu_1e-4_det}
\end{figure}

The results for $\mu = 10^{-4}$ are comparable to those with $\mu = 10^{-1}$.  On one hand, these results show that the performance of our algorithm is robust with respect to the barrier parameter value, which is good news.  At the same time, the results show that with a smaller barrier parameter, one should not necessarily expect to be able to obtain significantly better final solution estimates.  This can be attributed to the fact, which we mentioned at the end of \S\ref{sec.contributions}, that regardless of the barrier parameter value, one may be inhibited from obtaining high-accuracy solutions due to the presence of noise.


\section{Conclusion and Future Work}\label{sec.conclusion_4}

We have presented, analyzed, and tested the practical performance of an interior-point algorithm for solving continuous inequality-constrained optimization problems when function and derivative values are corrupted by noise.  Our algorithm focuses on solving a single barrier subproblem.  We have shown that the algorithm either terminates finitely due to one of two reasonable termination criteria, or it generates an infinite sequence of iterates, and in this latter case it is guaranteed under certain assumptions to reach a point at which a stationarity measure is below a threshold that depends on the noise level.

As previously mentioned in the paper and shown in our numerical experiments, in the noisy setting there is not necessarily a benefit of reducing the barrier parameter too small, since in any case the noise in the function and derivative values may inhibit the ability to acquire a highly accurate solution (to the underlying noiseless problem).  That being said, it is very reasonable to expect that, in some cases, one may obtain better solutions by solving a sequence of barrier subproblems for a diminishing sequence of barrier parameters.  We propose that this may be done by incorporating a practical stopping condition for a given barrier subproblem that builds off of the practical condition proposed in Theorem~3.9 in \cite{dezfulian2024convergence}.  This would essentially involve employing an iteration-dependent variant of \eqref{eq.stat_cond_0_v3} where the unknown quantities are estimated.  Since $(\eps_f,\eps_g,\eps_c,\eps_J)$ are presumed to be known, and both $g_{\sup}$ and $J_{\sup}$ are easily estimated during a run of the algorithm using \eqref{eq.J_g_bnd}, the main quantity that requires some attention is $\xi_d$.  By its definition in Lemma~\ref{lemma.d_bounded_by_delta_m}, one finds that this value is affected by the choice of $\omega$ in \eqref{eq.normal_tr_step}.  If $\omega$ is chosen to be a relatively large value, then $\xi_d$ becomes relatively small, which in turn can cause $\{\rho_{k,1},\rho_{k,2},\rho_{k,3}\}$ to become relatively large.  In some sense, this makes \eqref{eq.stat_cond_0_v3} easier to satisfy, but at the expense of only having a looser bound on $\Delta \mtrue_k (\dtrue_k,\tautrue)$.  By contrast, one may obtain a tighter, computable bound on $\Delta \mtrue_k (\dtrue_k,\tautrue)$ by replacing $\omega$ in the definition of $\eps_d$ with $\|J_k^T c_k\|/\|v_k\|$.  Passing this value through the definition of $\eps_d$, one can use the resulting right-hand side of \eqref{eq.stat_cond_0_v3} as a threshold for terminating the solve for a given barrier subproblem.  (Specifically, the solve may terminate if/when $\Delta m(d_k,\tau_k)$ is below this threshold.)  If/when this bound holds, the barrier parameter can be reduced using a typical update rule; see, e.g., \cite{wachter2006implementation}.
\bibliographystyle{plain}
\bibliography{references}
\appendix
\section{Convergence of $\|\Jtrue_k^T\ctrue_k\|^2$}\label{app.app}
Under looser assumptions than for the first part of Theorem~\ref{thm.main1}---specifically, without Assumptions~\ref{assumption.sigma_J} and \ref{ass.noise_level}---we prove in this appendix that Algorithm~\ref{alg.exact_alg} is guaranteed to generate a sequence of iterates such that, for some $k \in \NN$, one finds that $\|\Jtrue_{k }^T \ctrue_k\|^2$ is below a threshold that depends on noisy quantities.  The statement is similar to the first part of Theorem~\ref{thm.main1}.  As in Section~\ref{sec.convergence}, the result presumes that the algorithm does not terminate finitely.

\begin{theorem}\label{th.Jc}
  Suppose that Assumptions~\ref{assumption.err0}, \ref{assumption.bounded}, and \ref{assumption.s_bounded} hold, and for all $k \in \NN$, with $\xizero{\tau_k}$ defined in Lemma~\ref{lemma.dec_phi_delta_m}, $(\xithree,\xifour)$ defined in Lemma~\ref{lemma.delta_m_1}, $\epskd$ defined in~\eqref{eq.errors_k}, and $\alpha_k^{\max}$ computed to satisfy the fraction-to-the-boundary rule, let
  \begin{align}\label{eq.alpha_check}
    \check \alpha_k:= \min \left\{ \frac{1}{4 \xizero{\tau_k}} \left( \frac{(1 - \eta_\phi)\xithree \|\Jtrue_k^T \ctrue_k\|^2}{\xifour (\|\utrue_k\|^2 + \|\Jtrue_k^T \ctrue_k \|^2) + \|\epskd\|^2} \right), \alpha_k^{\max}\right\}.
  \end{align}
  Then, with $\Ekmone$ defined in Lemma~\ref{lemma.m1}, $\Ekmtwo$ defined in Lemma~\ref{lemma.m2}, and $\Ekm$ defined in Corollary~\ref{cor.m}, there exists $k \in \NN$ such that
  \begin{align}\label{eq.stat_cond_2}
    \|\Jtrue_{k }^T \ctrue_{ k }\|^2 \leq \max\left\{\frac{2 (\eta_\phi \Ekmone - (1 - \eta_\phi) \Ekmtwo)}{ (1 - \eta_\phi) \xithree }, - \frac{2\Ekm}{\xithree}, \frac{8(2 + \zeta) \vareps_k}{\check \alpha_k \eta_\phi \xithree} \right\}.
  \end{align}
\end{theorem}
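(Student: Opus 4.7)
The plan is to proceed by contradiction, closely mirroring the structure used for the first conclusion of Theorem~\ref{thm.main1}. The key translation is to use the lower bound $\Delta \mtrue_k(\dtrue_k, \tautrue_k) \geq \xithree \|\Jtrue_k^T \ctrue_k\|^2$ from Lemma~\ref{lemma.delta_m_1} to convert every appearance of the stationarity measure into a bound on $\|\Jtrue_k^T \ctrue_k\|^2$, and to use the more conservative step size $\check\alpha_k$ from \eqref{eq.alpha_check} (which depends only on $\|\Jtrue_k^T \ctrue_k\|^2$ as the measure) in place of $\hat\alpha_k$ from \eqref{eq.hat_alpha_k}.

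First I would suppose, for contradiction, that \eqref{eq.stat_cond_2} fails for every $k \in \NN$, and then fix an arbitrary such $k$. The initial key step is to verify that $\check\alpha_k \leq \hat\alpha_k$. This uses $\|\dtrue_k + \epskd\|^2 \leq 2\|\dtrue_k\|^2 + 2\|\epskd\|^2 \leq 2\xifour(\|\utrue_k\|^2 + \|\Jtrue_k^T \ctrue_k\|^2) + 2\|\epskd\|^2$ (via Lemma~\ref{lemma.delta_m_1}) in the denominator of the first entry of the min defining $\hat\alpha_k$, combined with $\Delta \mtrue_k(\dtrue_k, \tautrue_k) \geq \xithree \|\Jtrue_k^T \ctrue_k\|^2$ in the numerator. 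A short algebraic comparison then shows that the first argument in $\check\alpha_k$ is dominated by that in $\hat\alpha_k$, and both share the cap $\alpha_k^{\max}$.

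Next, since the first entry of the max in \eqref{eq.stat_cond_2} is exceeded, one has $\eta_\phi \Ekmone - (1-\eta_\phi)\Ekmtwo < \tfrac{1-\eta_\phi}{2}\xithree\|\Jtrue_k^T \ctrue_k\|^2 \leq \tfrac{1-\eta_\phi}{2}\Delta \mtrue_k(\dtrue_k, \tautrue_k)$, which is precisely \eqref{eq.err_cond_0}. Lemma~\ref{lemma.lb_on_alpha_1} then implies that the relaxed Armijo condition \eqref{eq.armijo} holds on $[0, \hat\alpha_k]$, and in particular at $\alpha = \check\alpha_k$ since $\check\alpha_k \leq \hat\alpha_k$. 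The backtracking line search therefore yields a step size $\alpha_k \geq \tfrac{1}{2}\check\alpha_k$. Combining the Armijo decrease, the slack-reset bound from Remark~\ref{remark.slackreset}, the noise estimate $|\phitrue(\cdot, \tau_k) - \phi(\cdot, \tau_k)| \leq \vareps_k$, Corollary~\ref{cor.m}, and the nonnegativity of $\Delta m_k(d_k,\tau_k)$ guaranteed by Lemma~\ref{lemma.well_defined} produces
\begin{align*}
\phitrue(z_{k+1}, \tau_k) - \phitrue(z_k, \tau_k) \leq -\tfrac{1}{2}\check\alpha_k \eta_\phi [\Delta \mtrue_k(\dtrue_k, \tautrue_k) + \Ekm] + (4+\zeta)\vareps_k.
\end{align*}

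Finally, the second failure condition $\|\Jtrue_k^T \ctrue_k\|^2 > -2\Ekm/\xithree$ gives $-\tfrac{1}{2}\check\alpha_k \eta_\phi \Ekm < \tfrac{1}{4}\check\alpha_k \eta_\phi \Delta \mtrue_k(\dtrue_k, \tautrue_k)$, while the third failure condition $\|\Jtrue_k^T \ctrue_k\|^2 > 8(2+\zeta)\vareps_k/(\check\alpha_k\eta_\phi\xithree)$ combined with $\xithree \|\Jtrue_k^T \ctrue_k\|^2 \leq \Delta \mtrue_k(\dtrue_k, \tautrue_k)$ yields $\tfrac{1}{4}\check\alpha_k\eta_\phi \Delta \mtrue_k(\dtrue_k, \tautrue_k) > (4+2\zeta)\vareps_k$. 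Substituting these bounds into the displayed inequality gives $\phitrue(z_{k+1}, \tau_k) - \phitrue(z_k, \tau_k) < -\zeta \vareps_k$ for every $k \in \NN$, contradicting the lower boundedness of $\{\phitrue(z_k, \tau_k)\}$ guaranteed by Assumption~\ref{assumption.s_bounded}. The main obstacle is the bookkeeping in the first step---verifying $\check\alpha_k \leq \hat\alpha_k$ so that Lemma~\ref{lemma.lb_on_alpha_1} can be invoked---and then the delicate choice of constants in \eqref{eq.alpha_check} and in the three thresholds in \eqref{eq.stat_cond_2} so that the three failure conditions combine to absorb both the model-reduction error $\Ekm$ and the Armijo slack $(4+\zeta)\vareps_k$ with room to spare.
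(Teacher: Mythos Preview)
Your proof is correct and follows the same overall contradiction structure as the paper: establish Armijo acceptance at step sizes down to $\check\alpha_k$, deduce $\alpha_k \geq \check\alpha_k/2$, and then use the three failure conditions together with Corollary~\ref{cor.m} and Lemma~\ref{lemma.delta_m_1} to force $\phitrue(z_{k+1},\tau_k)-\phitrue(z_k,\tau_k) < -\zeta\vareps_k$, contradicting Assumption~\ref{assumption.s_bounded}.

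The one place you differ from the paper is in how Armijo acceptance on $[0,\check\alpha_k]$ is obtained. The paper does \emph{not} pass through Lemma~\ref{lemma.lb_on_alpha_1} or $\hat\alpha_k$; instead it redoes the merit-function expansion from scratch, combining Lemma~\ref{lemma.dec_phi_delta_m}, Corollary~\ref{cor.m}, and the two bounds in Lemma~\ref{lemma.delta_m_1} directly to show \eqref{eq.armijo} holds for all $\alpha\le\check\alpha_k$ under~\eqref{eq.stat_cond_2_Jc}. Your route---verifying $\check\alpha_k\le\hat\alpha_k$ via $\xithree\|\Jtrue_k^T\ctrue_k\|^2\le\Delta\mtrue_k(\dtrue_k,\tautrue_k)$ and $\|\dtrue_k+\epskd\|^2\le 2\xifour(\|\utrue_k\|^2+\|\Jtrue_k^T\ctrue_k\|^2)+2\|\epskd\|^2$, then invoking Lemma~\ref{lemma.lb_on_alpha_1}---is a legitimate shortcut that reuses machinery already in place. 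The paper's self-contained derivation avoids the need to compare $\check\alpha_k$ with $\hat\alpha_k$, at the cost of repeating several lines of estimates; your approach trades that repetition for the small algebraic check that $\check\alpha_k\le\hat\alpha_k$. The remainder of the two arguments is essentially identical.
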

\begin{proof}
  Consider arbitrary $k \in \NN$.  Our first aim is to show that if
  \begin{equation}\label{eq.stat_cond_2_Jc}
    \eta_\phi \Ekmone -(1 - \eta_\phi) \Ekmtwo \leq \tfrac{1}{2} (1 - \eta_\phi) \xithree \|\Jtrue_k^T \ctrue_k\|^2,
  \end{equation}
  then the relaxed Armijo condition~\eqref{eq.armijo} is satisfied for all $\alpha \leq \check \alpha_k$.  Toward this end, first observe that by \eqref{eq.errors2}, Lemma~\ref{lemma.m2}, and \ref{lemma.dec_phi_delta_m}, one finds for all $\alpha \in (0, \alpha_k^{\max}]$ that
  \begin{align*}
    \phi(z_k + \alpha \dhat_k, \tau_k) -  \phi(z_k, \tau_k)
      \leq&\ \phitrue(z_k + \alpha \dhat_k, \tau_k) - \phitrue(z_k, \tau_k) + 2 \vareps_k \\
      \leq&\ - \alpha \Delta \mtrue_k(\dtrue_k, \tautrue_k) - \alpha \Ekmtwo + \xizero{\tau_k} \alpha^2 \|d_k\|^2 + 2 \vareps_k \\ 
      =&\ - \alpha \eta_{\phi} \Delta \mtrue_k(\dtrue_k, \tautrue_k) - \alpha (1 - \eta_{\phi}) \Delta \mtrue_k(\dtrue_k, \tautrue_k) \\
      &\ - \alpha \Ekmtwo + \xizero{\tau_k} \alpha^2 \|d_k\|^2 + 2 \vareps_k.
    \end{align*}
    Consequently, by Corollary~\ref{cor.m} and $\|d_k\|^2 \leq 2 (\|\dtrue_k\|^2 + \|\epskd\|^2)$, one finds 
    \begin{align*}
      &\ \phi(z_k + \alpha \dhat_k, \tau_k) - \phi(z_k, \tau_k) \\
      \leq&\ - \alpha \eta_{\phi} \Delta m_k(d_k, \tau_k) + \alpha \eta_{\phi} \Ekm + 2 \vareps_k \\
      &\ - \alpha (1 - \eta_{\phi}) \Delta \mtrue_k(\dtrue_k, \tautrue_k) - \alpha \Ekmtwo + 2 \xizero{\tau_k} \alpha^2 (\|\dtrue_k\|^2 + \|\epskd\|^2) \\
      =&\ - \alpha \eta_{\phi} \Delta m_k(d_k, \tau_k)+ 2 \vareps_k \\
      &\ - \alpha (1 - \eta_{\phi}) \Delta \mtrue_k(\dtrue_k, \tautrue_k) +  2 \xizero{\tau_k} \alpha^2 (\|\dtrue_k\|^2 + \|\epskd\|^2) \\
      &\ + \alpha \eta_{\phi} \Ekmone - \alpha (1 - \eta_{\phi}) \Ekmtwo.
    \end{align*}
    Thus, under \eqref{eq.stat_cond_2_Jc}, one finds from Lemma \ref{lemma.delta_m_1} that
    \begin{align*}
      \phi(z_k + \alpha \dhat_k, \tau_k) -  \phi(z_k, \tau_k)
        \leq&\ - \alpha \eta_{\phi} \Delta m_k(d_k, \tau_k)+ 2 \vareps_k \\
        &\ -  \alpha (1 - \eta_{\phi}) \xithree (\tautrue_k \|\utrue_k\|^2 + \|\Jtrue_k^T \ctrue_k\|^2 ) \\
        &\ + 2 \xizero{\tau_k} \xifour \alpha^2 (\|\utrue_k\|^2 + \|\Jtrue_k^T \ctrue_k\|^2 ) + 2 \xizero{\tau_k} \alpha^2 \|\epskd\|^2 \\
        &\ + \tfrac{1}{2} \alpha (1 - \eta_\phi) \xithree \|\Jtrue_k^T \ctrue_k\|^2 \\
        \leq&\ - \alpha \eta_{\phi} \Delta m_k(d_k,\tau_k) + 2 \vareps_k  \\
        &\ -  \tfrac12 \alpha (1 - \eta_{\phi}) \xithree  \|\Jtrue_k^T \ctrue_k\|^2  \\
        &\ + 2 \xizero{\tau_k} \xifour \alpha^2 (\|\utrue_k\|^2 + \|\Jtrue_k^T \ctrue_k\|^2 ) + 2 \xizero{\tau_k} \alpha^2 \|\epskd\|^2,
    \end{align*}
    which along with \eqref{eq.alpha_check} implies that, for all $\alpha \leq \check \alpha_k$, \eqref{eq.armijo} holds, as claimed.
   
    Now, to derive a contradiction to the desired conclusion, suppose that \eqref{eq.stat_cond_2} does not hold for all $k \in \NN$.  Under this supposition, consider arbitrary $k \in \NN$.  Since \eqref{eq.stat_cond_2} does not hold, it follows that \eqref{eq.stat_cond_2_Jc} holds, which in turn means that~\eqref{eq.armijo} holds for some step size $\alpha \geq \check\alpha_k/2$.  Thus, with Corollary~\ref{cor.m}, one has
    \begin{align*}
      \phitrue(z_k + \alpha \dhat_k, \tau_k) - \phitrue(z_k, \tau_k)
      \leq&\ \phi(z_k + \alpha \dhat_k, \tau_k) - \phi(z_k, \tau_k) + 2 \vareps_k \\
      \leq&\ - \tfrac{1}{2} \check \alpha_k \eta_\phi \Delta m_k(d_k, \tau_k) + 2 \vareps_k + (2 + \zeta) \vareps_k \\
         = & - \tfrac{1}{2} \check \alpha_k \eta_\phi \Delta \mtrue_k(\dtrue_k, \tautrue_k) - \tfrac{1}{2} \check \alpha_k \eta_\phi \Ekm + 2 \vareps_k + (2 + \zeta) \vareps_k.
    \end{align*}
    Now by Lemma~\ref{lemma.delta_m_1}, $\|\Jtrue_k^T \ctrue_k\|^2 > -\tfrac{2\Ekm}{\xithree}$, and $\|\Jtrue_k^T \ctrue_k \|^2 > \frac{8(2 + \zeta) \vareps_k}{\check \alpha_k \eta_\phi \xithree}$, one finds
    \begin{align*}
      \phitrue(z_k + \alpha \dhat_k, \tau_k) - \phitrue(z_k, \tau_k)
      \leq&\ - \tfrac{1}{2} \check \alpha_k \eta_\phi \xithree \|\Jtrue_k^T \ctrue_k\|^2 -\tfrac{1}{2} \check \alpha_k \eta_\phi \Ekm + 2 \vareps_k + (2 + \zeta) \vareps_k \\
      <&\ - \tfrac{1}{4} \check \alpha_k \eta_\phi \xithree \| \Jtrue_k^T \ctrue_k \|^2 + 2 \vareps_k + (2 + \zeta) \vareps_k < - \zeta \vareps_k.
    \end{align*}
    However, with Remark~\ref{remark.slackreset} this implies that $\phitrue(z_{k+1}, \tau_k) -  \phitrue(z_k, \tau_k) < -\zeta \vareps_k$ for all $k \in \NN$, which contradicts Assumption~\ref{assumption.s_bounded}, specifically the fact that $\{\phitrue(z_k,\tau_k)\}$ is bounded below.  Consequently, \eqref{eq.stat_cond_2} must hold for some $k \in \NN$, as desired.
  \qed
\end{proof}

\end{document}